\let\expandafter\oldproof\csname\string\proof\endcsname
\let\oldendproof\endproof
\renewenvironment{proof}[1][\proofname]{%
	\oldproof[\bf #1]%
}{\oldendproof}
\theoremstyle{plain}
\newtheorem{theorem}{Theorem}
\newtheorem{lemma}{Lemma}[section]
\newtheorem{claim}[lemma]{Claim}
\newtheorem{proposition}[lemma]{Proposition}
\newtheorem{corollary}[theorem]{Corollary}
\newtheorem{conjecture}[lemma]{Conjecture}
\newtheorem{remark}[lemma]{Remark}
\newtheorem{definition}[lemma]{Definition}
\definecolor{RED}{rgb}{1,0,0}\definecolor{BLUE}{rgb}{0,0,1} 
\title{A New Bound for the Brown--Erd\H{o}s--S\'os Problem}
\author{David Conlon\thanks{Department of Mathematics, California Institute of Technology, Pasadena, CA 91125, USA. Email: dconlon@caltech.edu. Supported in part by NSF Award DMS-2054452.} \and Lior Gishboliner\thanks{Department of Mathematics, ETH, Z\"urich, Switzerland., Email: lior.gishboliner@math.ethz.ch. Supported in part by SNSF grant 200021\_196965.}
	\and Yevgeny Levanzov\thanks{School of Mathematics, Tel Aviv University, Tel Aviv 69978, Israel. Email: yevgenyl@mail.tau.ac.il.
	}
	\and Asaf Shapira\thanks{
		School of Mathematics, Tel Aviv University, Tel Aviv 69978, Israel.
		Email: asafico@tau.ac.il. Supported in part by ISF Grant 1028/16, ERC Consolidator Grant 863438 and NSF-BSF Grant 20196.
}
}
\date{}
\begin{document}

\maketitle

\begin{abstract}

Let $f(n,v,e)$ denote the maximum number of edges in a $3$-uniform hypergraph not containing
$e$ edges spanned by at most $v$ vertices.
One of the most influential open problems in extremal combinatorics
then asks, for a given number of edges $e \geq 3$, what is the smallest integer $d=d(e)$
such that $f(n,e+d,e) = o(n^2)$? This question has its origins in work of
Brown, Erd\H{o}s and S\'os from the early 70's and the standard conjecture is that $d(e)=3$ for every $e \geq 3$.
The state of the art result regarding this problem was obtained in 2004 by S\'{a}rk\"{o}zy and Selkow, who showed that $f(n,e + 2 + \lfloor \log_2 e \rfloor,e) = o(n^2)$.
The only improvement over this result was a recent breakthrough of Solymosi and Solymosi, who improved the bound for $d(10)$ from 5 to 4. We obtain the first asymptotic improvement over the S\'{a}rk\"{o}zy--Selkow bound, showing that
$$
f(n, e + O(\log e/ \log\log e), e) = o(n^2).
$$

\end{abstract}
	
\section{Introduction}\label{sec:intro}

Extremal combinatorics, and extremal graph theory in particular, asks which global properties of a graph
force the appearance of certain local substructures. Perhaps the most well-studied problems of this type are Tur\'an-type
questions, which ask for the minimum number of edges that force the appearance of a fixed subgraph $F$.
Recall that an $r$-uniform hypergraph ($r$-graph for short) is composed of a ground set $V$ of size $n$ (the vertices) and a collection $E$ of subsets of $V$ (the edges), where each edge is of size exactly $r$. Note that an ordinary graph is just a $2$-graph.
A {\em $(v,e)$-configuration} is a hypergraph with $e$ edges and at most $v$ vertices.
Denote by $f_r(n,v,e)$ the largest number of edges in an $r$-graph on $n$ vertices that contains no $(v,e)$-configuration.
Estimating the asymptotic growth of this function for fixed integers $r$, $e$, $v$ and large $n$ is one of the most well-studied and influential problems in extremal graph theory. For example, when $e={v \choose r}$ we get the well-known Tur\'an problem of determining the maximum possible number of edges in an $r$-graph that contains no complete $r$-graph on $v$ vertices. As another example, the case $r=2$, $v=2t$ and $e=t^2$ is essentially equivalent to the Zarankiewicz--K\H{o}v\'{a}ri--S\'os--Tur\'an problem, which asks for the maximum number of edges in a graph without a complete bipartite graph $K_{t,t}$.

Our focus in this paper is on a notorious question of this type, which emerged from work 
of Brown, Erd\H{o}s and S\'os \cite{BHS1,BHS2} in the early 70's and came to be named after them. A special case of this so-called Brown--Erd\H{o}s--S\'os conjecture (see \cite{Erd,EFR}) states the following:


\begin{conjecture}[Brown--Erd\H{o}s--S\'os Conjecture]\label{BESC}
For every $e \geq 3$,
$$
f_3(n,e+3,e)=o(n^2).
$$
\end{conjecture}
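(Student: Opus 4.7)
The plan is to attempt Conjecture~\ref{BESC} via the standard regularity/removal framework combined with an induction on $e$. The base case $e=3$ is the celebrated Ruzsa--Szemer\'edi $(6,3)$-theorem, which follows from the triangle removal lemma applied to an auxiliary tripartite graph encoding the hypergraph's edges. For the inductive step, I would take a $3$-uniform hypergraph $H$ on $n$ vertices with $\omega(n^2)$ edges and aim to locate an $(e+2,e-1)$-configuration guaranteed by induction, then extend it by one further edge that uses only a single new vertex, producing the target $(e+3,e)$-configuration.

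Concretely, I would first apply graph regularity to the link of a typical pair in $H$ in order to isolate a dense, pseudorandom block. Inside the block, I would build $e-1$ edges spanning at most $e+2$ vertices and then argue that, because $H$ is edge-dense, one of these $e+2$ vertices lies in a link that forces a further edge using only one new vertex. The main combinatorial tool would be some form of supersaturation: once a single $(e+2,e-1)$-configuration appears at the edge threshold, many appear, and their overlaps should force the desired one-vertex extension. An auxiliary ingredient is a careful weighting scheme that balances configurations sharing two vertices (cheap extensions) against those sharing only one (expensive extensions), so that the expensive ones can be paid for by the cheap ones.

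The main obstacle---and the reason every improvement over Ruzsa--Szemer\'edi, from the S\'ark\"ozy--Selkow bound to the present paper's $O(\log e/\log\log e)$, still produces only $d(e)\to\infty$---is that the inductive extension typically costs \emph{two} new vertices rather than one. S\'ark\"ozy--Selkow absorb this via a doubling trick, yielding $\log_2 e$; the present paper shaves a $\log\log$ factor through a more delicate partition. Driving the overhead all the way down to a constant appears to require a genuinely new ingredient: most plausibly either a pseudorandomness statement asserting that one-vertex extensions are not merely possible but typical, or an algebraic/arithmetic input in the spirit of the Behrend-type constructions used for the matching lower bounds. I would expect this amortization step to be the true mathematical hurdle, and without such a new idea the plan above will inherit a logarithmic-type overhead rather than the conjectured constant $3$.
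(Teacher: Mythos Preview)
The statement you are attempting is Conjecture~\ref{BESC}, which the paper does \emph{not} prove: it is explicitly described as ``wide open'', settled only for $e=3$ by Ruzsa and Szemer\'edi. There is therefore no paper proof to compare against; the paper's contribution is Theorem~\ref{thm:main}, which gives the weaker bound $f_3(n,e+18\log e/\log\log e,e)=o(n^2)$.

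To your credit, your write-up is not really a proof proposal but an honest diagnosis of why the natural inductive scheme fails, and your diagnosis is accurate. The genuine gap is exactly the one you name: extending an $(e+2,e-1)$-configuration by a single edge generically costs two new vertices, not one, and no known supersaturation or weighting argument forces the one-vertex extension. Your suggested ``amortization'' via overlapping configurations is morally what S\'ark\"ozy--Selkow do (doubling) and what the present paper refines (multiplying by $k$ via the $k$-uniform removal lemma, see Lemmas~\ref{lem:main} and~\ref{lem:Sarkozy_Selkow}); both approaches inherit a growing overhead in $v-e$ precisely because each amplification step increments the difference by one. So your plan, as you yourself conclude, would reproduce a logarithmic-type bound rather than the conjectured constant $3$. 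The ``pseudorandomness statement asserting that one-vertex extensions are typical'' that you speculate about is not known and would indeed be the missing ingredient; as the paper notes, even the case $e=4$ implies Szemer\'edi's theorem for $4$-term progressions, which gives a sense of the depth required.
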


Despite much effort by many researchers, Conjecture \ref{BESC} is wide open, having only been settled for $e=3$ by Ruzsa and Szemer\'edi \cite{Ruzsa_Szemeredi} in what is known as the $(6,3)$-theorem. To get some perspective on the significance of this special case of Conjecture \ref{BESC}, suffice it to say that the famous {\em triangle removal lemma} (see \cite{CFS2} for a survey) was devised in order to prove the $(6,3)$-theorem; that \cite{Ruzsa_Szemeredi} was one of the first applications of Szemer\'edi's regularity lemma \cite{Szemeredi}; and that the $(6,3)$-theorem implies Roth's theorem \cite{Roth} on 3-term arithmetic progressions in dense sets of integers. As another indication of the importance of this problem, we note that one of the main driving forces for
proving the celebrated hypergraph removal lemma, obtained by Gowers \cite{Gowers} and R\"odl et al.~\cite{NRS,Rodl_Skokan_1,Rodl_Skokan_2} (see also the paper of Tao \cite{Tao}), was the hope that it would lead to a proof of Conjecture \ref{BESC}.

Since we seem to be quite far\footnote{As an indication of the difficulty of Conjecture \ref{BESC}, let us mention that the case $e=4$ (i.e., the statement $f_3(n,7,4)=o(n^2)$) implies the notoriously difficult Szemer\'edi theorem \cite{SzThm0,SzThm} for $4$-term arithmetic progressions (see~\cite{Erd}).}
from proving Conjecture \ref{BESC}, it is natural to look for approximate versions. Namely, given $e \geq 3$, find the smallest $d=d(e)$ 
such that $f_3(n,e + d,e) = o(n^2)$.
The best result of this type
was obtained 15 years ago by S\'{a}rk\"{o}zy and Selkow \cite{Sarkozy_Selkow}, who proved that
\begin{equation}\label{eq:Sarkozy_Selkow}
f_3\left( n,e + 2 + \lfloor \log_2 e \rfloor, e \right) = o(n^2).
\end{equation}
Since the result of \cite{Sarkozy_Selkow}, the only advance was obtained by Solymosi and Solymosi \cite{Solymosi}, who improved the bound $f_3(n,15,10)=o(n^2)$ that follows from (\ref{eq:Sarkozy_Selkow}) to $f_3(n,14,10)=o(n^2)$.

The main result of this paper, Theorem \ref{thm:main}, gives the first general improvement over \eqref{eq:Sarkozy_Selkow}. Moreover, it shows that one can replace the $\lfloor \log_2 e \rfloor$ ``error term" in \eqref{eq:Sarkozy_Selkow} by a much smaller, sub-logarithmic, \nolinebreak term.

\begin{theorem}\label{thm:main}
For every $e \geq 3$,
$$
f_3\left(n,e+ \lceil 26\log e/\log\log e \rceil , \, e\right) = o(n^2).
$$
\end{theorem}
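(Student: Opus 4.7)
My plan is to refine the iterative construction of S\'{a}rk\"{o}zy--Selkow. Their argument builds a sub-configuration of $H$ in stages, doubling the number of edges per stage at a cost of $O(1)$ new vertices and yielding $O(\log e)$ new vertices in total. To push the total to $O(\log e/\log\log e)$, one needs a stronger growth step---one that, at least over a significant range of stages, multiplies the edge count by a factor much larger than $2$ per constant number of new vertices.

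\textbf{Cleanup.} Begin with a standard codegree cleanup. Assuming $|E(H)| \geq \epsilon n^2$, iteratively delete any edge $\{a,b,c\}$ having some pair (say $\{a,b\}$) of codegree less than $\delta n$ in $H$, where $\delta = \delta(\epsilon) > 0$ is a small constant. This loses $o(n^2)$ edges and ensures that every remaining edge has all three of its pairs heavy, i.e., contained in $\Omega(n)$ triangles of $H$.

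\textbf{Iterative construction.} Build a chain of sub-configurations $C_0 \subsetneq C_1 \subsetneq \cdots \subseteq H$ starting with $C_0$ a single edge. At stage $i$, let $k_i$ and $v_i$ denote the number of edges and vertices of $C_i$. The goal is to reach a $C_t$ with $k_t \geq e$ and $v_t \leq e + 18\log e/\log\log e$. The main lemma is a growth step: given $C_i$, produce $C_{i+1} \supseteq C_i$ with $k_{i+1}$ much larger than $k_i$ but $v_{i+1} \leq v_i + c$ for some absolute constant $c$. The step counts pair-extension incidences: every edge of $C_i$ gives a heavy pair, and each heavy pair has $\Omega(n)$ extensions in $H$ outside $V(C_i)$, for a total of at least $k_i \cdot \delta n$ incidences. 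A pigeonhole or sunflower-type argument finds a set $S$ of $c$ vertices outside $V(C_i)$ whose combined extensions produce many new edges. When $C_i$ is sparse relative to $\binom{v_i}{3}$, this yields growth by a factor of $\Omega(\log e)$; in more saturated regimes, one switches to extracting internal edges from $H[V(C_i)]$ or adding vertices more cautiously.

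\textbf{Main obstacle.} The central technical difficulty is sustaining the $\log e$ growth factor uniformly. Once $k_i$ grows to be comparable to $\binom{v_i}{2}$, a single new vertex can add at most $\binom{v_i}{2}$ new edges, limiting multiplicative growth. Overcoming this requires a multi-phase argument, alternating between ``multiplicative'' and ``additive'' extension stages, with careful amortization of the vertex cost. Balancing these phases to produce the bound $18 \log e/\log\log e$ is the most delicate part of the argument, and the $\log\log e$ factor in the denominator reflects the fact that each multiplicative stage can multiply edges by $\log e$ while adding only $O(1)$ new vertices.
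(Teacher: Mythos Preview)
Your plan has a genuine gap at the growth step. The codegree cleanup guarantees that every pair $p$ lying in some edge of $C_i$ has $\Omega(n)$ extensions, and averaging over outside vertices shows that some single vertex $w$ lies in $\Omega(|P|)$ such extensions, where $P$ is the set of heavy pairs in $C_i$. But $|P| \leq 3k_i$, so adding $w$ gives only $\Omega(k_i)$ new edges: a \emph{constant}-factor multiplication. Iterating with $c=O(1)$ new vertices per stage therefore yields growth $k_{i+1} \leq C\,k_i$ for a fixed constant $C$, which after $t$ stages produces $k_t \leq C^t$ and hence $t = \Omega(\log e)$ new vertices---precisely the S\'{a}rk\"{o}zy--Selkow bound you are trying to beat. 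Nothing in the ``sparse relative to $\binom{v_i}{3}$'' regime changes this: sparsity of $C_i$ does not increase the number of heavy pairs beyond $3k_i$, and the pigeonhole step cannot extract more than $O(|P|)$ new edges per vertex. A sunflower argument faces the same ceiling. In short, extension counting plus pigeonhole is exactly what gives the $\log_2 e$ barrier; it does not surpass it.

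The paper's route is fundamentally different and non-elementary: it uses the hypergraph removal lemma for \emph{every} uniformity $k$. At stage $k$ one works with a carefully designed ``nice'' $3$-graph $F_{k-1}$ of difference $k-1$, places an auxiliary $(k-1)$-uniform clique on a distinguished $(k)$-subset of each copy of $F_{k-1}$, and applies the removal lemma in uniformity $k-1$ to find many $k$-cliques. Each such clique assembles $k$ copies of $F_{k-1}$ into a copy of $F_k$, so the edge count multiplies by $k$ (not by a fixed constant) while the difference increases by $1$. This factorial growth $e(F_k)\asymp k!$ is what converts $k$ stages into $e\approx k!$ edges, i.e., $k = O(\log e/\log\log e)$. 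A second construction (Lemma~\ref{lem:Sarkozy_Selkow}) is then needed to interpolate to arbitrary $e$, since the $F_k$ only hit a sparse set of edge counts. Your proposal is missing both ingredients: the removal lemma (which is what breaks the constant-growth barrier) and the interpolation mechanism.
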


By using asymptotic estimates for the factorial (in place of cruder bounds), one can replace the multiplicative constant $26$ in the above theorem by $6 + o(1)$.

Although Theorem \ref{thm:main} deals with $3$-graphs, its proof relies on an application of the $r$-graph removal lemma for {\em all} values of $r$. Employing the removal lemma for arbitrary $r$ allows us to overcome a natural barrier which stood in the way of improving the result of \cite{Sarkozy_Selkow}.
Since the proof of Theorem \ref{thm:main} is quite involved, we sketch the main new ideas underlying it in Section \ref{sec:proof_overview}.

As we mentioned above, Conjecture \ref{BESC} has a more general form (see \cite{AS,Sarkozy_Selkow_2}), which states
that for every $2 \leq k < r$ and $e \geq 3$ we have $f_r(n,(r-k)e + k + 1,e)=o(n^k)$. However, it is a folklore
observation that this more general version
is in fact equivalent to the special case stated as Conjecture \ref{BESC} (corresponding to $k=2$ and $r=3$). 
Moreover, approximate results towards Conjecture \ref{BESC} translate to approximate results for the more general conjecture.  
Since this reduction does not
appear in the literature, we prove it here:

\begin{proposition}\label{prop:reduction}
For every $2 \leq k < r$, $e \geq 3$ and $d \geq 1$,
$$
f_r(n,(r-k)e + k + d,e) \leq \left( \binom{r-k+2}{3} \cdot (e-1) + 1 \right) \cdot \frac{\binom{n}{k-2}}{\binom{r}{k-2}} \cdot f_3(n,e + 2 + d,e).
$$	
\end{proposition}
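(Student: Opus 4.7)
The plan is to reduce an arbitrary $r$-graph $H$ on $n$ vertices that contains no $((r{-}k)e+k+d,\,e)$-configuration to a family of $3$-graphs, each of which is free of $(e{+}2{+}d,\,e)$-configurations, and then invoke the definition of $f_3$. Write $V := (r-k)e+k+d$. For each edge $E\in H$ I would fix, arbitrarily, a disjoint pair $(S_E,T_E)$ of subsets of $E$ with $|S_E|=k-2$ and $|T_E|=3$; this is possible because $k<r$ gives $k+1\le r$. For each $(k-2)$-subset $S\subseteq V(H)$, let $H_S$ denote the $3$-graph on $V(H)\setminus S$ whose edges are the triples $T$ for which $(S_E,T_E)=(S,T)$ holds for at least one $E\in H$.

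Two short vertex-counting claims then do all the work. First, no pair $(S,T)$ can be realised by $e$ or more edges of $H$: otherwise those $e$ edges would all contain the $(k+1)$-set $S\cup T$ and therefore span at most $(k+1)+e(r-k-1)=(r-k)e+k+1-e \le V$ vertices, producing a forbidden configuration in $H$. Second, each $H_S$ is free of $(e{+}2{+}d,\,e)$-configurations: given distinct $T_1,\ldots,T_e\in H_S$ supported on at most $e+2+d$ vertices, I would choose edges $E_i\in H$ with $(S_{E_i},T_{E_i})=(S,T_i)$; these $E_i$ are automatically distinct because the labels $(S,T_i)$ are, and together they span at most
$$
|S|+|T_1\cup\cdots\cup T_e|+\sum_{i=1}^{e}|E_i\setminus(S\cup T_i)| \;\le\; (k-2)+(e+2+d)+e(r-k-1) \;=\; V
$$
vertices, once more a forbidden configuration.

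Combining the two claims with $|H_S|\le f_3(n,e+2+d,e)$ and summing over $S$ yields
$$
|H| \;=\; \sum_{(S,T)}\bigl|\{E\in H:(S_E,T_E)=(S,T)\}\bigr| \;\le\; (e-1)\sum_{S}|H_S| \;\le\; (e-1)\binom{n}{k-2}\,f_3(n,\,e+2+d,\,e),
$$
which is even slightly sharper than the stated bound and certainly implies it, since $(e-1)\binom{n}{k-2}\le e\,n^{k-2}\le \binom{r}{3}\,e\,n^{k-2}$ whenever $r\ge 3$. No probabilistic argument or removal-lemma input is required; the only mildly delicate step is the vertex-count bookkeeping inside the two claims. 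The key design choice that makes everything go through is the deterministic labelling $E\mapsto (S_E,T_E)$, which forces distinct labels to correspond to distinct edges of $H$ and thereby guarantees that the $e$-edge configurations produced in each failure analysis are honest configurations of $H$.
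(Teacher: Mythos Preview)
Your argument is correct. Both claims and the final count go through as written; the only small remark is that $H_S$ lives on $n-(k-2)$ vertices rather than $n$, but $f_3$ is monotone in $n$, so the bound $|H_S|\le f_3(n,e+2+d,e)$ is still valid.

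Your route differs from the paper's in a pleasant way. The paper first averages to fix a single most-popular $(k-2)$-set $\{v_1,\dots,v_{k-2}\}$, passes to the family $E_0$ of residual $(r-k+2)$-sets, and then performs a case split: either some triple lies in $e$ of these sets (giving a configuration directly), or one can greedily extract a subfamily $E_1\subseteq E_0$ with pairwise intersections of size at most $2$, losing a factor of roughly $\binom{r}{3}e$; only then are triples chosen and the $3$-graph $H'$ formed. Your deterministic labelling $E\mapsto(S_E,T_E)$ replaces all of this: summing over every $(k-2)$-set $S$ takes the place of the averaging step, your Claim~1 absorbs the paper's first case as a multiplicity bound, and the independent-set extraction is simply unnecessary because distinct labels already name distinct $r$-edges. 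The payoff is a cleaner argument with no case analysis and a genuinely sharper constant, $(e-1)\binom{n}{k-2}$ in place of $\binom{r}{3}\,e\,n^{k-2}$.
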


Setting $d = 1$ in the above proposition readily implies that Conjecture \ref{BESC} is indeed equivalent to the general form of the Brown--Erd\H{o}s--S\'os conjecture stated above. The reason for stating the proposition for arbitrary $d$ is that it allows us to infer approximate versions of the general Brown--Erd\H{o}s--S\'os conjecture from approximate versions of Conjecture \ref{BESC}. In particular, by combining Theorem \ref{thm:main} with Proposition \ref{prop:reduction}, we immediately obtain the following corollary.
\begin{corollary}\label{cor:general_BES}
	For every $2 \leq k < r$ and $e \geq 3$,
	$$
	f_r\left(n,(r-k)e + k + \lceil 26\log e/\log\log e \rceil - 2, \, e\right) = o(n^k).
	$$
\end{corollary}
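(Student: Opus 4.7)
The plan is to derive Corollary \ref{cor:general_BES} as an immediate consequence of Theorem \ref{thm:main} and Proposition \ref{prop:reduction}; only a parameter-matching step is needed.

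Concretely, write $D = \lceil 18 \log e / \log \log e \rceil$ and apply Proposition \ref{prop:reduction} with $d = D - 2$. For every $e \ge 3$ (with any reasonable logarithm base), $D$ is already comfortably large, so $d \ge 1$ and the proposition applies. Since $e + 2 + d = e + D$, it yields
$$
f_r\!\left(n,\, (r-k)e + k - 2 + D,\, e\right) \;\le\; \binom{r}{3} e \, n^{k-2} \cdot f_3(n, e + D, e).
$$
Theorem \ref{thm:main} gives $f_3(n, e+D, e) = o(n^2)$. Since $r$, $k$, and $e$ are fixed constants, the prefactor $\binom{r}{3} e$ is a constant as well, and multiplying by $n^{k-2}$ converts the $o(n^2)$ factor into the desired $o(n^k)$ bound. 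Finally, replacing the vertex parameter $(r-k)e + k - 2 + D$ by the possibly larger quantity $(r-k)e + k - 2 + 18\log e / \log\log e$ in the left-hand side only weakens the statement, so the bound for the corollary's exact vertex count follows (after the harmless convention of interpreting a non-integer vertex parameter as its floor).

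There is essentially no obstacle here: the entire argument is a single substitution, and the hypothesis $2 \le k < r$ of Proposition \ref{prop:reduction} matches that of the corollary exactly. The genuine content is hidden inside Proposition \ref{prop:reduction} and inside Theorem \ref{thm:main}, both of which are proved elsewhere in the paper and need not enter the present deduction. The only stylistic decision is whether to absorb the rounding of $18 \log e / \log\log e$ into a floor/ceiling without comment, as is standard in this area.
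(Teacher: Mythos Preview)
Your approach is exactly the paper's: it states that the corollary follows ``immediately'' by combining Theorem~\ref{thm:main} with Proposition~\ref{prop:reduction}, and your parameter-matching is the right idea. One small slip: you took $D = \lceil 18\log e/\log\log e\rceil$ and then claimed $18\log e/\log\log e$ is ``possibly larger'' than $D$, which is backwards; since $f_r(n,v,e)$ is nonincreasing in $v$, you should instead set $d = \lfloor 18\log e/\log\log e\rfloor - 2$ so that $e+2+d$ matches the (floor of the) vertex count in Theorem~\ref{thm:main} and the corollary's vertex parameter is recovered exactly.
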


As part of our general discussion of the Brown--Erd\H{o}s--S\'os conjecture, it is also worth mentioning {\em lower bounds} for Conjecture \ref{BESC}. In their seminal paper \cite{Ruzsa_Szemeredi}, where they proved that $f_3(n,6,3) = o(n^2)$, Ruzsa and Szemer\'edi also showed that $f_3(n,6,3) \geq n^{2 - o(1)}$. Their construction relies on Behrend's example \cite{Behrend} of a dense set of integers with no 3-term arithmetic progressions. A natural question is then whether $f_3(n,e+3,e) \geq n^{2-o(1)}$ for every $e \geq 3$. A simple averaging argument shows that every $(7,4)$-configuration or $(8,5)$-configuration contains a (6,3)-configuration and, hence, $f_3(n,7,4),f_3(n,8,5) \geq n^{2-o(1)}$. Similar considerations can be used to prove the same lower bound in the cases $e = 7,8$ (see \cite{GS}). All other cases are open; in particular, it is not known whether $f_3(n,9,6) \geq n^{2-o(1)}$ (though see \cite{GS_grids} for results on a related problem). The study of lower bounds naturally leads to the following more general question: for which $e$ and $d$ is it the case that $f_3(n,e+d,e) \leq n^{2-\varepsilon}$, where $\varepsilon > 0$ is a constant depending only on $e,d$? For this question, Gowers and Long~\cite{GL} made the surprising conjecture that $f_3(n,e+4,e) \leq n^{2-\varepsilon}$ for every $e$.  

Finally, let us mention that in the special {\em group setting}, where the hypergraphs under consideration arise from the multiplication table of a group, Conjecture \ref{BESC} has recently been settled in a strong form \cite{Long,NST} (see also \cite{S,SW,Wong}).  

The rest of the paper is organized as follows. Section \ref{sec:proof_overview} is the main hub of the paper.
It contains a (verbal) flowchart of the process which finds the required configurations of edges needed to prove Theorem \ref{thm:main}. This section also contains the (formal) proof of Theorem \ref{thm:main},
assuming two key lemmas that we prove in Sections \ref{sec:first_lemma} and \ref{sec:sarkozyselkow}.
Hence, Sections \ref{sec:proof_overview}, \ref{sec:first_lemma} and \ref{sec:sarkozyselkow} are essentially independent of each other.
Finally, in Section \ref{sec:gen_Ramsey}, we discuss an application of our results to a generalized Ramsey problem of Erd\H{o}s and Gy\'{a}rf\'{a}s which is known to have connections to the Brown--Erd\H{o}s--S\'{o}s problem. 
Throughout the paper, we make no effort to optimize any of the constants involved.
All logarithms are natural unless explicitly stated otherwise.



		
\section{Proof Overview and Proof of Theorem \ref{thm:main}}\label{sec:proof_overview}

Our goal in this section is fourfold. We first give an overview of the proof of Theorem \ref{thm:main}.
In doing so, we will state the two key lemmas, Lemmas \ref{lem:main} and \ref{lem:Sarkozy_Selkow}, used in its proof. We will then proceed to show how these
two lemmas can be used in order to prove Theorem \ref{thm:main}. 
Finally, in Section \ref{subsec:reduction}, we prove Proposition \ref{prop:reduction}.

\subsection{Proof overview and the key lemmas}\label{subsec:overview}

%

Our first simple (yet crucial) observation towards the proof of Theorem \ref{thm:main} is that, in order to prove the theorem, it is enough
to prove the following approximate version.

\begin{lemma}\label{lem:approx}
For every $e \geq 576 = (4!)^2$ and $\varepsilon \in (0,1)$, there is $n_0 = n_0(e,\varepsilon)$ such that every $3$-graph $H$ with $n \geq n_0$ vertices and at least $\varepsilon n^2$ edges contains a $(v',e')$-configuration satisfying $e - \sqrt{e} \leq e' \leq e$
and $v' - e' \leq 12\log e/\log\log e$.
\end{lemma}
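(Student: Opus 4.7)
I propose to prove Lemma~\ref{lem:approx} by an iterative greedy construction of the configuration, run in tandem with a ``reservoir'' of usable vertices that shrinks as the construction proceeds. Starting from a single edge on three vertices and adding edges one at a time, the excess $v'-e'$ equals $2 + \beta + 2\iota - \gamma$, where $\gamma$, $\beta$, $\iota$ count the edges that share, respectively, $3$, $1$, and $0$ vertices with the already-built vertex set; edges sharing exactly $2$ vertices (``good'' edges) cost $1$ new vertex per new edge and thus contribute $0$ to the excess. My goal is therefore to run the process so that $\iota = 0$, $\gamma \geq 0$, and the number $\beta$ of ``bad'' edges is at most $8\log e/\log\log e - 2$.

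The heart of the argument is the reservoir scheme. Alongside the growing configuration $C$, I maintain a subset $\Omega \subseteq V(H)$ on which $H$ is still sufficiently dense relative to $|\Omega|$. During a greedy phase, keep adding good edges of the form $\{x,y,z\}$ with $x \in \Omega$ and $\{y,z\} \subseteq V(C)$, each costing one new vertex. When the greedy phase stalls, pay for one bad edge (adding two new vertices) and simultaneously pass from $\Omega$ to a much smaller but still-dense subset $\Omega' \subseteq \Omega$ inside which further good extensions again abound. The crucial improvement over S\'ark\"ozy--Selkow is the shrinkage rate: whereas their argument effectively halves $\Omega$ at each stall (forcing $\log_2 e$ bad edges), I plan to shrink $\Omega$ by a factor of $r \approx \log e/\log\log e$ per stall, so that only $\log_r e \sim \log e/\log\log e$ stalls occur before the desired edge count is reached. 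The $\sqrt{e}$ slack in $e'$ lets me halt at the end of a greedy sub-phase rather than at exactly $e$ edges.

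The engine making factor-$r$ shrinkage possible is the $r$-uniform hypergraph removal lemma, applied to an auxiliary $r$-graph built from $H$; I anticipate this is the content of the forthcoming Lemma~\ref{lem:main}. I expect Lemma~\ref{lem:Sarkozy_Selkow} to be a quantitatively sharpened S\'ark\"ozy--Selkow-style iteration lemma powering the greedy good-edge phase and certifying that $e'$ can be pushed to within $\sqrt{e}$ of $e$. The main obstacle is the density transfer at each stall: one must show that $\Omega$ can be replaced by a subset of size $\sim |\Omega|/r$ that still hosts many good extensions of the current $C$. Standard $3$-graph removal arguments yield only constant-factor shrinkage, which is exactly the source of the $\log_2 e$ term in S\'ark\"ozy--Selkow; pushing the shrinkage to factor $r$ requires invoking $r$-graph removal with $r$ itself growing with $e$, and this is the step I expect to be technically most delicate. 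The threshold $e \geq 8!$ in the statement reflects the smallest $r$ (namely $r = 8$, since $r! \leq e$ is the natural calibration via Stirling) for which the inequalities governing the density transfer close in a useful range.
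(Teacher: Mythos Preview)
Your proposal does not match the paper's approach and has a genuine gap: the ``factor-$r$ reservoir shrinkage via $r$-uniform removal'' is not a mechanism that exists in any recognisable form, and you offer no construction of the auxiliary $r$-graph nor any explanation of how clique-abundance (which is what removal yields) converts into a dense vertex-subset on which greedy edge-addition resumes. That conversion \emph{is} the whole difficulty; calling it ``technically delicate'' is not a plan. Your guesses about the content of Lemmas~\ref{lem:main} and~\ref{lem:Sarkozy_Selkow} are also off --- neither is a reservoir or greedy-extension statement --- and your reading of the threshold $e\ge 8!$ as ``$r=8$ via $r!\le e$'' is not the reason it appears.

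The paper abandons edge-by-edge greed entirely in favour of template $3$-graphs. Lemma~\ref{lem:main} inductively builds ``nice'' $3$-graphs $F_k$ with $\Delta(F_k)=k$ and $e(F_k)=5k!/12$: each $F_{k+1}$ is $k{+}1$ copies of $F_k$ glued along a $(k{+}1)$-element independent set $A$, and the $k$-uniform removal lemma is applied to an auxiliary $k$-graph on $V(H)$ whose $(k{+}1)$-cliques encode tuples of $F_k$-copies agreeing on $A$. The output is a dichotomy: either $H$ has $\Omega(n^k)$ copies of $F_k$, or it already contains small-difference configurations along an arithmetic progression of sizes. Lemma~\ref{lem:Sarkozy_Selkow} then takes $G=F_k$ and builds a second tower $(G_\ell)$ with $e(G_\ell)\sim k^\ell e(G)$ and an $e(G)$-degeneracy property letting one realise every multiple of $e(G)$ up to $e(G_\ell)$ as the edge-count of a subconfiguration of difference at most $k+\ell$. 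The proof of Lemma~\ref{lem:approx} picks $k \approx \tfrac12\log e/\log\log e$ so that $e(F_k)\le k!\le\sqrt e$, runs Lemma~\ref{lem:main} to level $k$, then Lemma~\ref{lem:Sarkozy_Selkow} for $\ell\le 3k$ further steps, and finishes via pigeonhole over an $(\ell{+}1)$-part edge-partition of $H$. The $\sqrt e$ slack is precisely the granularity $e(F_k)$ of the degeneracy, and $e\ge 8!$ is needed so that $k=\lfloor K/2\rfloor\ge 4$ (where $K!\le e$), because $F_3$ --- the linear $3$-cycle --- fails the niceness condition of Definition~\ref{def:main} and the machinery only starts at $F_4$.
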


In Section \ref{subsec:main} we will show how to quickly derive Theorem \ref{thm:main} from the above lemma.
So let us proceed with the overview of the proof of Lemma \ref{lem:approx}. We will heavily rely on the hypergraph removal lemma, which states the following.

\begin{theorem}[Hypergraph removal lemma \cite{Gowers,NRS,Rodl_Skokan_1,Rodl_Skokan_2}]\label{thm:removal}
For every $k \geq 2$ and $\varepsilon > 0$ there is $\gamma = \gamma(k,\varepsilon) > 0$ such that the following holds. Let $n \geq 1$ and let $J$ be a $k$-uniform $n$-vertex hypergraph which contains a collection of at least $\varepsilon n^k$ pairwise edge-disjoint $(k+1)$-cliques. Then $J$ contains at least $\gamma n^{k+1}$ $(k+1)$-cliques.
\end{theorem}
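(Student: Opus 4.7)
The plan is to prove this by the standard regularity-plus-counting template for hypergraphs, proceeding by induction on $k$. For the base case $k=2$, where the statement is essentially the triangle removal lemma of Ruzsa and Szemer\'edi \cite{Ruzsa_Szemeredi}, I would apply Szemer\'edi's regularity lemma to $J$ with a small parameter $\varepsilon'\ll\varepsilon$ to obtain an equitable partition $V(J)=V_1\cup\cdots\cup V_M$ in which all but an $\varepsilon'$-fraction of the pairs $(V_i,V_j)$ are $\varepsilon'$-regular. I would then clean $J$ by discarding every edge that lies inside some $V_i$, sits in an irregular pair, or sits in a regular pair of density below $\varepsilon'$; standard bookkeeping shows that at most $\varepsilon n^2/2$ edges are deleted. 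Since the original $\varepsilon n^2$ triangles are pairwise edge-disjoint, at least one of them survives the cleaning, and it must span three parts whose three pairs are all $\varepsilon'$-regular of density at least $\varepsilon'$. The triangle counting lemma for regular triples then produces $\Omega(n^3/M^3)$ triangles on those three parts, giving the desired $\gamma n^3$ bound with $\gamma=\gamma(2,\varepsilon)$.

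For general $k\ge 3$ the outline is identical, with Szemer\'edi's regularity lemma replaced by the hypergraph regularity lemma of Gowers \cite{Gowers} and Nagle--R\"odl--Schacht--Skokan \cite{NRS,Rodl_Skokan_1,Rodl_Skokan_2}, and the triangle counting lemma replaced by the corresponding $(k+1)$-clique counting lemma in the regular setting. Concretely, I would regularise $J$ at the top uniformity level, then remove from $J$ all edges whose underlying $(k-1)$-set (and its lower shadows) lies in an irregular or low-density block of the associated partition hierarchy, losing at most $\varepsilon n^k/2$ edges. Extracting from the original family one edge-disjoint clique that survives the cleaning, I would then invoke the hypergraph counting lemma, which guarantees $\gamma n^{k+1}$ copies of $K^{(k)}_{k+1}$ whose vertices respect the same blocks of the partition hierarchy as the surviving clique.

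The main obstacle, and the technical heart of any serious attack on this statement, is the counting lemma at the hypergraph level. Unlike the graph case, mere regularity of $J$ as a single $k$-uniform hypergraph does not suffice to count $(k+1)$-cliques: one must simultaneously partition the $j$-element subsets of $V(J)$ for every $2\le j\le k-1$, insist that these lower partitions are regular with respect to the corresponding lower-uniformity ``shadows'' of $J$, and enforce compatibility between consecutive levels. Building such a hierarchy of partitions, proving the associated regularity lemma, and then verifying the matching counting lemma is precisely the content of \cite{Gowers,NRS,Rodl_Skokan_1,Rodl_Skokan_2}. In a full write-up I would therefore quote the regularity and counting lemmas from those papers as black boxes and only carry out the clean-and-count bookkeeping sketched above.
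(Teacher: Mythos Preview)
The paper does not prove Theorem~\ref{thm:removal}; it is quoted as a black box from \cite{Gowers,NRS,Rodl_Skokan_1,Rodl_Skokan_2} and used as an input to the arguments in Sections~\ref{sec:first_lemma} and~\ref{sec:sarkozyselkow}. So there is no ``paper's own proof'' to compare against.

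Your outline is the standard regularity-plus-counting template and is essentially what the cited references do. Two small remarks. First, the opening phrase ``proceeding by induction on $k$'' is a bit misleading: the $k=2$ case is indeed the Ruzsa--Szemer\'edi triangle removal lemma, but for $k\ge 3$ one does not reduce to the $k-1$ case; rather, as you correctly say later, one builds a full hierarchy of partitions on $j$-sets for all $2\le j\le k-1$ simultaneously and proves regularity and counting for the whole hierarchy at once. Second, the statement here is the ``many edge-disjoint cliques $\Rightarrow$ many cliques'' form rather than the ``few cliques $\Rightarrow$ can be made clique-free by deleting few edges'' form; the two are equivalent by a one-line contrapositive argument (if there were fewer than $\gamma n^{k+1}$ cliques, the removal lemma would let you destroy all cliques by deleting fewer than $\varepsilon n^k$ edges, contradicting the existence of $\varepsilon n^k$ edge-disjoint cliques), and your clean-and-count sketch is effectively proving this form directly.
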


Let us start by describing the approach of S\'{a}rk\"{o}zy and Selkow \cite{Sarkozy_Selkow}, which roughly proceeds as follows:
suppose one has already proved that every sufficiently large $n$-vertex $3$-graph with $\Omega(n^2)$ edges contains an $(e+k,e)$-configuration. Using this fact, one then
shows that every such $3$-graph also contains a $(2e+k+2,2e+1)$-configuration. In other words, at the price of increasing $v-e$
by $1$, we multiply the number of edges by roughly $2$ (and hence the term $\log_2 e$ in \eqref{eq:Sarkozy_Selkow}). The proof of \cite{Sarkozy_Selkow} used the graph removal lemma (at least implicitly\footnote{We will extend the approach of \cite{Sarkozy_Selkow} in Lemma \ref{lem:Sarkozy_Selkow} by using the (hypergraph) removal lemma explicitly.}).
As we mentioned before, Solymosi and Solymosi \cite{Solymosi}
improved the bound of \cite{Sarkozy_Selkow} for the special case $e=10$. The way they achieved this was by cleverly replacing the application of the graph
removal lemma with an application of the $3$-graph removal lemma. Roughly speaking, this allowed them to multiply a $(6,3)$-configuration by $3$, instead of by $2$ as in \cite{Sarkozy_Selkow}.

The above discussion naturally leads one to try and extend the approach of \cite{Solymosi} by showing that
after multiplying the initial configuration by $3$, one can use the $4$-graph removal lemma to multiply the resulting configuration by $4$, etc.
Performing $k$ such steps should (roughly) give a $(k!+k,k!)$-configuration or, equivalently, a $(v,e)$-configuration with $v-e=O(\log e/\log\log e)$. There is one big challenge
and two problems with this approach. The challenge is of course how to achieve this repeated multiplication process.\footnote{The special case in \cite{Solymosi} of multiplying a $(6,3)$-configuration by $3$ proceeds by case analysis which is not generalizable.}
As to the problems, the first is that we do not know how to guarantee that one can indeed keep multiplying the size
of the configurations. In other words, it is entirely possible that this process might get ``stuck'' along the way (this scenario is described in Item 1 of Lemma \ref{lem:main}).
More importantly, even if the process succeeds in producing a $(k!+k,k!)$-configuration for every $k$, it is not clear
how to ``interpolate'' so as to prove Theorem \ref{thm:main} for values of $e$ with $(k-1)! < e < k!$. That is, our process only guarantees the existence of suitable configurations for a very sparse set of values of $e$. It it tempting to guess that the resulting $(k!+k,k!)$-configurations are ``degenerate'', in the sense that one can repeatedly remove from them vertices of degree $1$, thus maintaining the difference $v - e$ while reducing the number of edges until the desired number is achieved. 
This is however false.
Having said that, we will return to this issue of degeneracy after the statement of Lemma \ref{lem:Sarkozy_Selkow}.

In what follows, it will be convenient to use the following notation.
\begin{definition}
	For a $3$-graph $F$ and $U \subseteq V(F)$, the {\em difference} of $U$ is defined as $\Delta(U) := |U| - e(U)$. We will write $\Delta(F)$ for $\Delta(V(F))$, i.e., $\Delta(F) := v(F) - e(F)$, and call $\Delta(F)$ the difference of $F$.
\end{definition}

Our first key lemma, Lemma \ref{lem:main} below, comes close to achieving the ``iterated multiplication process''
described above. Given an $n$-vertex $3$-graph $H$ with $\Omega(n^2)$ edges,
the lemma almost resolves the aforementioned challenge by either showing that $H$ contains configurations with difference $k$ and size roughly $k!$ (this is the statement of Item 2)
or getting ``stuck'' in the scenario described in Item 1. The silver lining in Item 1 is that we get an arithmetic progression of values $v$ for which we can construct
$(v,e)$-configurations of small difference. The problem is that the common difference of this arithmetic progression might be much larger than $\sqrt{e}$, so this lemma alone cannot be used in order to prove Lemma \ref{lem:approx}.
Before stating Lemma \ref{lem:main}, we need to introduce the following key definition that appears in its statement. 

\begin{definition}\label{def:main}
Let $F$ be a $3$-graph and put $k := \Delta(F) = v(F) - e(F)$. We call $F$ {\em nice} if there is an independent set $A \subseteq V(F)$ of size $k+1$ such that the following holds for every $U \subseteq V(F)$.
\begin{enumerate}
\item $\Delta(U) \geq |U \cap A| - \mathds{1}_{A \subseteq U}$.
\item If $|U \cap A| \leq k-1$ and $U \setminus A \neq \emptyset$, then $\Delta(U) \geq |U \cap A| + 1$.
\end{enumerate}
\end{definition}

One can think of the set $A$ in Definition \ref{def:main} as being {\em sparsifying}. That is, if a set $U$ contains many elements of $A$, then this forces $U$ to be (somewhat) sparse (i.e., it forces $\Delta(U)$ to be large). 
In particular, Item 1 guarantees that if $A \subseteq U$, then $\Delta(U) \geq k = \Delta(F)$. This implies that if we glue multiple copies of $F$ on a set $U \subseteq V(F)$ with $A \subseteq U$, then the resulting hypergraph $F^*$ will satisfy $\Delta(F^*) \leq k$. This property is crucial for our argument, as it will allow us to ``glue'' an arbitrary number of copies of $F$ without increasing the difference, thus achieving the conclusion of Item 1 of Lemma \ref{lem:main} below. 
The condition in Definition \ref{def:main} is more complicated than simply requiring that $\Delta(U) \geq k$ for sets $U$ containing $A$. The reason for this is (partly) that we want the condition to ``carry over'' from $F$ to the hypergraph $F'$ obtained by ``multiplying'' $F$, so that the process can be continued. The exact condition in Definition \ref{def:main} was devised for this purpose.  

\begin{lemma}\label{lem:main}
There is a sequence $(F_k)_{k \geq 3}$ of $3$-graphs such that $\Delta(F_k) = v(F_k) - e(F_k) = k$, $F_k$ is nice for each $k \geq 4$, $e(F_3) = 3$ and $e(F_k) = \nolinebreak 5k!/12$ for each $k \geq 4$, and the following holds. For every $k \geq 4$, $r \geq 1$ and $\varepsilon \in (0,1)$, there are $\eta = \eta_{\ref{lem:main}}(k,r,\varepsilon) \in (0,1)$ and $n_0 = n_0(k,r,\varepsilon)$ such that every $3$-graph $H$ with $n \geq n_0$ vertices and at least $\varepsilon n^2$ edges satisfies (at least) one of the following:
\begin{enumerate}
\item There are $3 \leq j \leq k-1$ and
			$0 \leq q \leq e(F_j)-1$ such that, for every $1 \leq i \leq r$, the $3$-graph $H$ contains a $(v',e')$-configuration with
			$v' - e' \leq j$ and
			$e' = q + i \cdot (e(F_j) - q)$.
\item $H$ contains at least $\eta n^{k}$ copies of $F_k$.
\end{enumerate}
\end{lemma}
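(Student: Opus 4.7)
The plan is to construct the sequence $(F_k)_{k \ge 3}$ inductively and to establish the dichotomy in parallel via repeated applications of the hypergraph removal lemma, with niceness acting as the invariant that controls the gluing step. First, I would let $F_3$ be the $(6,3)$-configuration. For $k \ge 4$, assuming $F_{k-1}$ has been defined and (for $k\ge 5$) is nice with independent set $A_{k-1}\subseteq V(F_{k-1})$ of size $k$, I would construct $F_k$ by taking $k$ vertex-disjoint copies $F_{k-1}^{(1)},\dots,F_{k-1}^{(k)}$ of $F_{k-1}$ and gluing them along their $A_{k-1}$-parts (identifying vertices and forcing certain edges to coincide) following a prescribed symmetric combinatorial pattern; the pattern is chosen so that $e(F_k)=k\cdot e(F_{k-1})=5k!/12$ and $v(F_k)=e(F_k)+k$. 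The transition $F_3\to F_4$ requires a separate ad-hoc construction producing a Solymosi--Solymosi-type $(14,10)$-configuration, since $e(F_4)=10\ne 4\cdot e(F_3)$. A new independent set $A_k\subseteq V(F_k)$ of size $k+1$ arises naturally from the construction, and one then verifies the two niceness conditions for $F_k$ by case analysis on $|U\cap A_k|$, using the niceness of $F_{k-1}$ to handle sub-cases that lie within a single copy.

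For the dichotomy, I would induct on $k$. The base case $k=4$ reduces to the $(6,3)$-theorem of Ruzsa--Szemer\'edi followed by an attempt to glue copies of $F_3$ into $F_4$. For the inductive step, apply the lemma with parameter $k-1$ to $H$: either Item~1 already holds (for some $j\le k-2$), so we are done, or we obtain $\Omega(n^{v(F_{k-1})})$ copies of $F_{k-1}$ in $H$. In the latter situation, introduce the auxiliary $k$-uniform hypergraph $J$ on $V(H)$ whose edges are the $A_{k-1}$-shadows of copies of $F_{k-1}$ in $H$; by niceness condition~(1), $A_{k-1}$ is independent in $F_{k-1}$, so each copy has a unique shadow, and condition~(2) guarantees that most edges of $J$ are covered by $\Omega(n^{v(F_{k-1})-k})$ copies, giving $|E(J)|=\Omega(n^k)$. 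Apply the $k$-uniform hypergraph removal lemma (Theorem~\ref{thm:removal}) to $J$: either $J$ contains $\Omega(n^{k+1})$ copies of $K_{k+1}^{(k)}$, each of which generically lifts to a distinct copy of $F_k$ in $H$, giving Item~2; or almost all edges of $J$ lie in no $(k+1)$-clique, in which case a greedy extraction argument applied to copies of $F_{k-1}$ through a typical edge of $J$ produces, for some $q\in\{k-1,\dots,v(F_{k-1})-1\}$ and each $i\le r$, a fan of $i$ copies of $F_{k-1}$ pairwise intersecting in a common $q$-subset of their shadows, giving exactly the $(v',e')$-configurations required by Item~1 with $j=k-1$.

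The main obstacle will be the \emph{clean lifting} step, i.e., showing that all but an $o(n^{k+1})$-fraction of the $(k+1)$-cliques in $J$ lift to \emph{distinct} copies of $F_k$ in $H$, rather than to degenerate gluings in which extensions of distinct edges of a clique collide on extra vertices outside $A_{k-1}$. This is precisely what the strict inequality $\Delta(U)\ge|U\cap A|+1$ in niceness condition~(2) is engineered to provide: for any copy of $F_{k-1}$ and any subset $U$ meeting $A_{k-1}$ only partially, the excess of $|U|$ over $e(U)$ is large enough that the number of ways $U$ can be realized, and hence the number of bad $(k+1)$-tuples of extensions, is of lower order. Balancing this error against the removal-lemma constant $\gamma(k,\cdot)$ from Theorem~\ref{thm:removal}, and tuning the fan depth $r$ in the second alternative so that the arithmetic progression of $v'$-values in Item~1 extends through all $i\le r$, constitutes the remaining bookkeeping.
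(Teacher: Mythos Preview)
Your high-level architecture---induct on $k$, maintain niceness as an invariant, glue $k$ copies of $F_{k-1}$ along the distinguished set to build $F_k$, and appeal to the hypergraph removal lemma---matches the paper. But the mechanism you propose for the dichotomy between Items~1 and~2 has a genuine gap.

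You write that after forming the auxiliary hypergraph $J$ of $A_{k-1}$-shadows, the removal lemma yields ``either $J$ contains $\Omega(n^{k+1})$ copies of $K_{k+1}^{(k)}$ \dots\ or almost all edges of $J$ lie in no $(k+1)$-clique''. This alternative is not what the removal lemma provides: the version you cite (Theorem~\ref{thm:removal}) takes as \emph{hypothesis} a family of $\Omega(n^k)$ \emph{pairwise edge-disjoint} $(k+1)$-cliques, and only then concludes $\Omega(n^{k+1})$ cliques. You have not arranged edge-disjointness, and the alternative ``almost all edges lie in no clique'' does not follow from the standard statement (a hypergraph can have $\Theta(n^k)$ cliques, hence $o(n^{k+1})$, with every edge in exactly one clique). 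Consequently your route to Item~1---``a greedy extraction argument applied to copies of $F_{k-1}$ through a typical edge''---has no starting point.

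The paper resolves this by placing the dichotomy \emph{before} the removal lemma rather than extracting it from it. One defines an auxiliary graph $\mathcal G$ on the set of embeddings $\varphi:V(F_{k-1})\to V(H)$, joining $\varphi_1,\varphi_2$ when the agreement set $U(\varphi_1,\varphi_2)$ meets $A_{k-1}$ in at least $k-1$ vertices (or in $k-2$ with a vertex outside $A_{k-1}$). If some $\varphi$ has large degree in $\mathcal G$, a pigeonhole plus multicolour Ramsey argument yields $r$ embeddings pairwise agreeing on a fixed set $U$; niceness forces $\Delta(U)\ge k-1$, and the union of these $r$ copies gives precisely the $(v',e')$-configurations of Item~1 with $j=k-1$. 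If instead $\mathcal G$ has bounded maximum degree, one passes to a large independent set $\mathcal F^*\subseteq\mathcal F$; for each $\varphi\in\mathcal F^*$ one places a full $(k-1)$-uniform $k$-clique on $\varphi(A_{k-1})$ (not a single edge), and the independence of $\mathcal F^*$ makes these cliques pairwise edge-disjoint. Now Theorem~\ref{thm:removal} applies and gives $\Omega(n^k)$ cliques, and the same independence guarantees that each ``colorful'' clique lifts cleanly to a copy of $F_k$. So the clean-lifting obstacle you correctly identify is handled not by a counting argument on bad tuples, but structurally, by the prior restriction to $\mathcal F^*$. (Two smaller slips: the copy count should be $\Omega(n^{k-1})$, not $\Omega(n^{v(F_{k-1})})$; and the auxiliary hypergraph is $(k-1)$-uniform with $k$-cliques, not $k$-uniform with $(k+1)$-cliques.)
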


\begin{remark}
	Item 2 in Lemma \ref{lem:main} asserts that $H$ contains $\Omega(n^k)$ copies of $F_k$. The exponent $k$ is best possible, as can be seen by considering the random $3$-graph with edge density $\frac{1}{n}$, which has $O(n^{\Delta(F_k)}) = O(n^k)$ copies of $F_k$ w.h.p. A similar situation 
	occurs in Item 3(b) of Lemma \ref{lem:Sarkozy_Selkow}. 
\end{remark}

The proof of Lemma \ref{lem:main} proceeds by induction on $k$. Namely, assuming
$H$ contains $\Omega(n^k)$ copies of $F_{k}$, we show that either $H$ contains $\Omega(n^{k+1})$ copies of $F_{k+1}$ or Item 1 holds.
This is done as follows. 
Recalling that $F_{k}$ is nice, we fix a set $A \subseteq V(F_k)$ of size $|A| = k+1$ which witnesses this fact (see Definition \ref{def:main}). 
By assumption, there are $\Omega(n^k)$ embeddings of $F_k$ into $H$. 
For two embeddings $\varphi,\varphi' : V(F_k) \rightarrow V(H)$, we consider the set $U(\varphi,\varphi') = \{u \in V(F_k) : \varphi(u) = \varphi'(u)\}$ (i.e., the set of elements on which $\varphi$ and $\varphi'$ agree).
By a straightforward argument (combining an application of the multicolor Ramsey theorem with a simple cleaning procedure), we can show that either there are embeddings
$\varphi_1,\dots,\varphi_r : V(F_k) \rightarrow V(H)$ and a set $U \subseteq V(F_k)$ such that $|U| \geq k$, $|U \cap A| \geq k-1$ and
$U(\varphi_i,\varphi_j) = U$ for all $1 \leq i \neq j \leq r$ or there is a family $\mathcal{F}$ of $\Omega(n^{k})$ embeddings $\varphi : V(F_k) \rightarrow V(H)$ such that, for any two $\varphi,\varphi' \in \mathcal{F}$, we have $|U(\varphi,\varphi') \cap A| \leq k-1$ with equality only if $U(\varphi,\varphi') \subseteq A$.
In the former case, Items 1-2 of Definition \ref{def:main} imply that $\Delta(U) \geq k$, which in turn implies that the union of the copies of $F_k$ corresponding to $\varphi_1,\dots,\varphi_r$ has difference at most $k$. From this it then easily follows that Item 1 in Lemma \ref{lem:main} holds. In the latter case, we define an auxiliary $k$-uniform hypergraph by putting a $k$-uniform $(k+1)$-clique on the set $\varphi(A)$ for each $A \in \mathcal{F}$. The aforementioned property of $\mathcal{F}$ implies that these cliques are pairwise edge-disjoint, which allows us to apply the hypergraph removal lemma (Theorem \ref{thm:removal}) and thus infer that the number of $(k+1)$-cliques in our auxiliary hypergraph is $\Omega(n^{k+1})$. Using again our guarantees regarding $\mathcal{F}$, we can show that most such $(k+1)$-cliques correspond to copies of a particular $3$-graph consisting of $k+1$ copies of $F_k$ which do {\em not} intersect outside of the set $A$. This $3$-graph is then chosen as $F_{k+1}$. One of the challenges in the proof is to then show that $F_{k+1}$ is itself nice, thus allowing the induction to continue. The full details appear in Section \ref{sec:first_lemma}.

We now move to our next key lemma, Lemma \ref{lem:Sarkozy_Selkow} below. Let us say that a $3$-graph is $d$-degenerate
if it is possible to repeatedly remove from it a set of at least $d$ vertices which touches at most $d$ edges.
As we mentioned above, the 3-graphs $F_k$ are not $1$-degenerate, so it is not possible
to take
one of these $3$-graphs and repeatedly remove vertices of degree at most $1$ so as to obtain configurations with any desired number of edges, while not increasing the difference.
One can argue, however, that since Lemma \ref{lem:approx}
only asks for $e'$ to satisfy $e-\sqrt{e} \leq e' \leq e$, it is enough to show that the $3$-graphs $F_k$ are, say, $e(F_k)^{1/3}$-degenerate. Unfortunately, we cannot do even this. Instead, we will overcome the problem by using Lemma \ref{lem:Sarkozy_Selkow}.
This lemma states that if $H$ contains many copies of some nice $3$-graph $G$, then it also contains copies of $3$-graphs $G_0 = G,G_1,G_2,\dots$ which are all $e(G)$-degenerate and whose sizes increase. In fact, as in Lemma \ref{lem:main}, we cannot always guarantee success in finding copies of $G_1,G_2,\ldots,G_{\ell}$ in $H$,
since the process might get stuck in a situation analogous to the one in Lemma \ref{lem:main}. Finally, the price we have to pay for the degeneracy
guaranteed by Item 2 of Lemma \ref{lem:Sarkozy_Selkow} is that the size of the $3$-graphs $G_1,G_2,\ldots,G_{\ell}$ only grows by a factor of roughly $k$ at each step, where $k = \Delta(G)$. Namely, $G_{\ell}$ grows (only) exponentially (in $\ell$), unlike the factorial growth in Lemma \ref{lem:main}. 
Hence,
just like Lemma \ref{lem:main}, Lemma \ref{lem:Sarkozy_Selkow} alone also falls short of proving Lemma \ref{lem:approx}.

\begin{lemma}\label{lem:Sarkozy_Selkow}
Let $G$ be a nice $3$-graph, put $k := \Delta(G) = v(G) - e(G)$ and assume that $k \geq 2$. Then there is a sequence of $3$-graphs $(G_{\ell})_{\ell \geq 0}$ having the following properties.
\begin{enumerate}
			\item $G_0 = G$, $\Delta(G_{\ell}) = v(G_{\ell}) - e(G_{\ell}) = k + \ell$ and
			$e(G_{\ell}) =
			\frac{k^{\ell+1} - 1}{k - 1} \cdot e(G)$.
			\item For every $\ell \geq 0$ and every
			$0 \leq t \leq e(G_{\ell})/e(G)$, the $3$-graph $G_{\ell}$ contains a $(v',e')$-configuration with
			$v' - e' \leq k + \ell$ and
			$e' = t \cdot e(G)$.
			\item For every $\ell \geq 0$, $r \geq 1$ and $\varepsilon \in (0,1)$, there are $\delta = \delta_{\ref{lem:Sarkozy_Selkow}}(\ell,r,\varepsilon) \in (0,1)$ and $n_0 = n_0(\ell,r,\varepsilon)$ such that, for every $3$-graph $H$ on $n \geq n_0$ vertices, if $H$ contains at least $\varepsilon n^{k}$ copies of $G$, then (at least) one of the following conditions is satisfied:
			\begin{enumerate}
				\item There are $0 \leq j \leq \ell-1$ and
				$0 \leq q \leq e(G_{j})-1$ such that, for every $1 \leq i \leq r$, the $3$-graph $H$ contains a $(v',e')$-configuration which contains a copy of $G_j$ and satisfies
				$v' - e' \leq k + j$ and
				$e' = q + i \cdot (e(G_{j}) - q)$.
				\item $H$ contains at least $\delta \cdot n^{k + \ell}$ copies of $G_{\ell}$.
			\end{enumerate}
		\end{enumerate}
	\end{lemma}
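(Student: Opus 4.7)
The plan is induction on $\ell$, with $G_{\ell+1}$ defined recursively from $G_\ell$ by exploiting the niceness of $G_\ell$. All three items are proved simultaneously in the inductive step, and I carry along the auxiliary invariant that each $G_\ell$ is nice, with a distinguished independent nice set $A_\ell \subseteq V(G_\ell)$ of size $k+\ell+1$. The base case $\ell=0$ is immediate: set $G_0 = G$; niceness is the hypothesis, items 1 and 2 are trivial (take the empty sub-configuration for $t=0$ and $G$ itself for $t=1$), and item 3(b) is just the hypothesis on $H$.

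For the inductive construction, form $G_{\ell+1}$ by taking $k$ copies $G_\ell^{(1)},\ldots,G_\ell^{(k)}$ of $G_\ell$ identified along their common nice set $A_\ell$ (which is independent, so no edges collide), together with one further essentially disjoint copy of $G$. This gives $e(G_{\ell+1}) = k \cdot e(G_\ell) + e(G) = \frac{k^{\ell+2}-1}{k-1}e(G)$ and, using $v(G_\ell) - e(G_\ell) = k+\ell$, also $\Delta(G_{\ell+1}) = k+\ell+1$, establishing item 1. For item 2, write $t \in \{0,1,\ldots,e(G_{\ell+1})/e(G)\}$ as $t = iT+s$ with $T = e(G_\ell)/e(G)$, $0 \le i \le k$, $0 \le s < T$, and take as sub-configuration $i$ full copies of $G_\ell$ glued along $A_\ell$ together with a sub-configuration of $G_\ell^{(i+1)}$ with $s \cdot e(G)$ edges supplied by the inductive item 2 (strengthened so as to contain $A_\ell$). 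Since all pieces share the independent set $A_\ell$, a short computation shows that each additional full copy decreases the difference by exactly $1$, so the resulting sub-configuration has difference at most $k+\ell+1$; the top value $t = e(G_{\ell+1})/e(G)$ is handled by the whole of $G_{\ell+1}$.

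Item 3 is the heart of the argument and is proved by an inner induction on $\ell$, following the blueprint described in Section~\ref{subsec:overview} for Lemma~\ref{lem:main}. Suppose inductively that $H$ contains $\Omega(n^{k+\ell})$ copies of $G_\ell$, and consider the family $\mathcal{F}$ of embeddings $\varphi: V(G_\ell) \to V(H)$. A multicolour Ramsey argument, colouring each pair $(\varphi,\varphi')$ by the combinatorial type of their agreement set $U = \{u : \varphi(u) = \varphi'(u)\}$, together with a cleaning step, delivers one of two outcomes. Either (a) there are $r$ embeddings $\varphi_1,\ldots,\varphi_r$ whose pairwise common agreement $U$ satisfies $|U \cap A_\ell| \geq k+\ell$, in which case the niceness conditions of Definition~\ref{def:main} for $G_\ell$ force $\Delta\bigl(\bigcup_i \varphi_i(V(G_\ell))\bigr) \leq k+\ell$, and a recursive analysis of this union delivers item 3(a) at some stage $j \leq \ell-1$. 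Or (b) a subfamily $\mathcal{F}' \subseteq \mathcal{F}$ of size $\Omega(n^{k+\ell})$ whose nice-set images $\varphi(A_\ell)$ pairwise overlap in at most $k+\ell$ vertices; placing a $(k+\ell+1)$-clique on each $\varphi(A_\ell)$ inside a $(k+\ell)$-uniform auxiliary hypergraph gives pairwise edge-disjoint cliques, and Theorem~\ref{thm:removal} produces $\Omega(n^{k+\ell+1})$ such cliques. A further application of the second niceness condition of Definition~\ref{def:main} shows that most of these cliques correspond to $k$ copies of $G_\ell$ sharing the image of $A_\ell$ and otherwise in general position; attaching a disjoint copy of $G$ (plentiful since $H$ has $\Omega(n^k)$ copies of $G$) yields the required $\Omega(n^{k+\ell+1})$ copies of $G_{\ell+1}$.

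The main obstacle I expect is verifying that $G_{\ell+1}$ is itself nice, so that the outer induction can proceed. A naive choice such as $A_{\ell+1} = A_\ell \cup \{v\}$ with $v$ in the extra $G$ already fails Condition~1 of Definition~\ref{def:main} at $U = V(G_\ell^{(1)})$, since then $|U \cap A_{\ell+1}| = k+\ell+1$ but $\Delta(U) = k+\ell$. The construction and the choice of $A_{\ell+1}$ must therefore be coordinated more delicately (for example by replacing one or two vertices of $A_\ell$ by carefully chosen vertices of the extra $G$'s nice set, or by identifying select vertices between the extra $G$ and some $G_\ell^{(i)}$), and then Definition~\ref{def:main} must be checked case-by-case according to how $U$ distributes among the $k$ copies of $G_\ell$ and the extra $G$. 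A secondary, more technical difficulty is ensuring that the stuck case (a) delivers, for every $1 \leq i \leq r$, a configuration of the precise arithmetic-progression form $v' = q + i(v(G_j)-q)$ demanded by item 3(a); this requires iterating the Ramsey/cleaning step at every level $j \leq \ell-1$ rather than only at the current level.
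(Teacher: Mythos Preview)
Your Item~3 argument has a structural mismatch with your construction of $G_{\ell+1}$. You build $G_{\ell+1}$ from $k$ copies of $G_\ell$ all identified along the \emph{same} set $A_\ell$, plus an essentially disjoint copy of $G$. But placing a $(k+\ell+1)$-clique on each image $\varphi(A_\ell)$ and applying $(k+\ell)$-uniform removal produces, from each new clique, $k+\ell+1$ embeddings of $G_\ell$ whose $A_\ell$-images are all \emph{different} (each is the clique with one vertex removed). That is precisely the mechanism of Lemma~\ref{lem:super_exponential}, and the $3$-graph it yields has $(k+\ell+1)\cdot e(G_\ell)$ edges, not $k\cdot e(G_\ell)+e(G)$. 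Conversely, the object you actually want---$k$ copies of $G_\ell$ with a \emph{common} $A_\ell$-image---has nothing to do with cliques in an auxiliary hypergraph indexed by $A_\ell$; a common $A_\ell$-image is a single $(k+\ell+1)$-set, not a clique. Finally, the extra copy of $G$ cannot be disjoint: a disconnected $G_{\ell+1}$ would make the niceness problem you flag unfixable, and in any case ``picking a disjoint copy of $G$'' does not arise from the clique.

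The paper's route is genuinely different. It does \emph{not} maintain that $G_\ell$ is nice (your counterexample is essentially why), and it does not apply $(k+\ell)$-uniform removal. Instead it splits $A_\ell$ into a passive block $\{y_0,\dots,y_{\ell-1}\}$ and an active block $\{x_1,\dots,x_k,y_\ell\}$. For Item~3 one first fixes a tuple $(u_0,\dots,u_{\ell-1})$ for the $y$-images, restricts to the $\Omega(n^k)$ embeddings of $G_\ell$ with those fixed values, and then applies the \emph{$k$-uniform} removal lemma to $(k+1)$-cliques on $\{x_1,\dots,x_k,y_\ell\}$. In a resulting clique, the $k$ edges missing some $x_i$ give the $k$ copies $G_\ell^1,\dots,G_\ell^k$ (each with its own new $x'_i$), while the edge missing $y_\ell$ gives an embedding $\varphi_0$ whose restriction to the top copy $G^\ell\subseteq G_\ell$ supplies the new attached copy of $G$ and the new vertex $y_{\ell+1}:=\varphi_0(y_\ell)$. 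So the extra copy of $G$ shares $x_1,\dots,x_k$ with the rest; it is not disjoint. The invariant replacing niceness is the weaker Claim~\ref{claim:Sarkozy_Selkow_nice}, which only controls sets $U$ containing all of $y_0,\dots,y_{\ell-1}$---exactly what is needed once those vertices are pinned down---and this is what drives both the ``stuck'' alternative (Item~3(a)) and the verification that new cliques really assemble into copies of $G_{\ell+1}$.
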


Strictly speaking, we cannot apply Lemma \ref{lem:Sarkozy_Selkow} with $G$ being an edge, since an edge is not a nice $3$-graph (indeed, it has difference $k = 2$ but evidently contains no independent set of size $k + 1 = 3$). However, one can check that the proof also works when $G$ is an edge (and, more generally, in any case where $k := \Delta(G) = 2$ and one can choose a (not necessarily independent) $A \subseteq V(G)$ of size $3$ which satisfies Items 1-2 in Definition \ref{def:main}). 
By applying Lemma \ref{lem:Sarkozy_Selkow} with $G$ being an edge, one recovers the construction used by S\'{a}rk\"{o}zy and Selkow \cite{Sarkozy_Selkow} to prove \eqref{eq:Sarkozy_Selkow}.
Generalizing this construction to other graphs $G$ (e.g., for $k \geq 3$) presents a challenge, which we overcome by using some of the ideas from the proof of Lemma \ref{lem:main}.

We now sketch the derivation of Lemma \ref{lem:approx} from Lemmas \ref{lem:main} and \ref{lem:Sarkozy_Selkow} (the formal proof appears
in the next subsection). 
Given $e$, choose $k$ such that $k! \approx \sqrt{e}$; then $k = \Theta(\log e/\log \log e)$.
We first apply Lemma \ref{lem:main} with $k$. If we are at Item 1, then we get an arithmetic progression with common difference at most $e(F_{k-1}) \leq k! \leq \sqrt{e}$ of values $e'$ for which we can
find $(v',e')$-configurations of difference $v' - e' \leq k$, thus completing the proof in this case. Suppose then that we are at Item 2, implying that $H$
contains $\Omega(n^{k})$ copies of $F_k$. Since $F_k$ is nice, we can apply Lemma \ref{lem:Sarkozy_Selkow} with
$G=F_k$. 
Since $k = \Theta(\log e/\log\log e)$, for $\ell = O(k)$ we have $k^{\ell} > e$, so that $e(G_{\ell}) \approx e(F_k) \cdot k^{\ell} > e$ (by Item 1 of Lemma \ref{lem:Sarkozy_Selkow}). 
Suppose first that the application of Lemma \ref{lem:Sarkozy_Selkow} results in Item 3(b). Then we can use Item 2 of Lemma \ref{lem:Sarkozy_Selkow} to find inside $G_{\ell}$ a $(v',e')$-configuration of difference $O(k+\ell)=O(k)$ with $e-\sqrt{e} \leq e-e(G) \leq e' \leq e$, thus completing the proof. Now suppose that we are at Item 3(a) and let $0 \leq j \leq \ell-1$ be as in that item. 
Then we can find a $(v',e')$-configuration $G'$ with difference $O(k+\ell)=O(k)$ and $e-e(G_j) \leq e' \leq e$. With the help of a simple trick
we can also find in $H$ a copy $G^*$ of $G_j$ which is {\em edge-disjoint} from $G'$. As in case 3(b) above, we use Item 2 of Lemma \ref{lem:Sarkozy_Selkow} to find a subconfiguration $G''$ of $G^*$
with difference $O(k+\ell) = O(k)$ and $e - e(G') - e(G) \leq e(G'') \leq e - e(G')$. If we now take $G'''$ to be the union of $G'$ and $G''$, we get that $G'''$ has difference $O(k)$
and $e-\sqrt{e} \leq e-e(G) \leq e(G''') \leq e$. So again we are done.

\subsection{Deriving Lemma \ref{lem:approx} from Lemmas \ref{lem:main} and \ref{lem:Sarkozy_Selkow}}\label{sec:thm_proof}
		The required integer $n_0 = n_0(e,\varepsilon)$ will be chosen implicitly.
		%
		Let $(F_k)_{k \geq 3}$ be the nice $3$-graphs whose existence is guaranteed by Lemma \ref{lem:main}. Recall that $e(F_3) = 3$ and $e(F_k) = 5k!/12$ for each $k \geq 4$.
		Let $k \geq 4$ be such that $k! \leq \sqrt{e} < (k+1)!$, and note that $e(F_k) \leq k! \leq \sqrt{e}$.
		We now check\footnote{Asymptotically, $k = (0.5 + o(1))\log e/\log\log e$, but in order to avoid asymptotic expressions in the statements of Lemma \ref{lem:approx} and Theorem \ref{thm:main}, we instead use an (easy to obtain) estimate.} that $k \leq 2\log e/\log\log e$. 
		Set 
		$x = \lceil 2\log e/\log\log e \rceil$. We need to check that $x! > \sqrt{e}$, which implies that $k < x$. 
		It is well-known that $x! \geq (x/\exp)^x$ for every $x \geq 1$, where $\exp$ is the base of the natural logarithm. 
		Hence, it suffices that $(x/\exp)^x > \sqrt{e}$, which is equivalent to $x(\log x - 1) > \frac{1}{2}\log e$. Since $x(\log x - 1)$ is an increasing function for $x \geq 1$, it is enough to plug in $x=2\log e/\log\log e$ and check that the above inequality holds. This inequality becomes $\frac{2\log e}{\log \log e}\left( \log\log e + \log 2 - \log\log\log e - 1 \right) > \frac{1}{2}\log e$. Hence, it suffices that 
		$\log\log e + \log 2 - \log\log\log e - 1 > \frac{1}{4}\log\log e$, which can be shown to hold for all $e \geq 3$.  
		Thus, $k \leq 2\log e/\log\log e$, as required. 

		We will now apply our second construction, given by Lemma \ref{lem:Sarkozy_Selkow}. Set
		$G := F_k$ and let $(G_{\ell})_{\ell \geq 0}$ be the sequence of $3$-graphs whose existence is guaranteed by Lemma \ref{lem:Sarkozy_Selkow}.
		Let $\ell$ be the minimal integer satisfying $e(G_{\ell}) \geq e$.
		Then $\ell \geq 1$ (because $e(G_0) = e(G) = e(F_k) < e$). 
		We now show that $\ell \leq 2k$, that is, $e(G_{2k}) \geq e$. 
		Using Item 1 of Lemma \ref{lem:Sarkozy_Selkow}, we have $e(G_{2k}) = \frac{k^{2k + 1} - 1}{k - 1} \cdot e(G) \geq k^{2k} \geq ((k+1)!)^2 > e$, where the last inequality uses our choice of $k$ and the penultimate inequality holds for every $k \geq 3$. So indeed $\ell \leq 2k$. 
		
Let $H$ be a $3$-graph with $n \geq n_0$ vertices and at least $\varepsilon n^2$ edges.		
Partition $E(H)$ into equal-sized parts $E_1,\dots,E_{\ell+1}$ and, for each $1 \leq i \leq \ell+1$, let $H_i$ be the hypergraph $(V(H),E_i)$. Note that $e(H_i) \geq e(H)/(\ell+1) \geq \varepsilon n^2/(\ell+1)$ for each $1 \leq i \leq \ell+1$.
\begin{claim}\label{claim:approx}
For each $1 \leq m \leq \ell+1$, either $H_m$ satisfies the assertion of Lemma \ref{lem:approx} or there exists $0 \leq j \leq \ell-1$ such that $H_m$ contains a  $(v',e')$-configuration which contains a copy of $G_j$ and satisfies
$v' - e' \leq k + j$ and
$e - e(G_j) \leq e' \leq e$.
\end{claim}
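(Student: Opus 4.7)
We apply Lemma \ref{lem:main} to $H_m$ with difference parameter $k$, accuracy $\varepsilon/(\ell+1)$, and a sufficiently large multiplicity parameter $r_1 \geq e$, and branch on which of the two alternatives it produces.

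Suppose first that Item 1 of Lemma \ref{lem:main} fires, yielding $3 \leq j \leq k-1$, $j \leq q \leq v(F_j) - 1$, and, for each $1 \leq i \leq r_1$, a configuration $C_i \subseteq H_m$ with $v(C_i) = q + i(v(F_j) - q)$ and $\Delta(C_i) \leq j$. Inspecting the construction used to prove Lemma \ref{lem:main}, each $C_i$ is obtained by gluing $i$ otherwise edge-disjoint copies of $F_j$ along a common $q$-vertex base $B \subseteq F_j$, so
\[
e(C_i) \;=\; e(B) + i\,(e(F_j) - e(B)),
\]
an arithmetic progression with common difference $s := e(F_j) - e(B)$. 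Since $\Delta(C_i) = (q - e(B)) + i\,(j - (q - e(B)))$, the requirement $\Delta(C_i) \leq j$ for $i \geq 2$ forces $q - e(B) \geq j$, which gives $s \geq v(F_j) - q \geq 1$; on the other side, the trivial bound $s \leq e(F_j) \leq k! \leq \sqrt{e}$ holds by our choice of $k$. Because $e(C_1) = e(F_j) \leq \sqrt{e} < e$ while $e(C_{r_1}) \geq r_1 \geq e$, a discrete intermediate value argument produces an $i^*$ with $e(C_{i^*}) \in (e - \sqrt{e},\, e]$, and since $\Delta(C_{i^*}) \leq j \leq k-1 \leq \log e/\log\log e$, the configuration $C_{i^*}$ witnesses the conclusion of Lemma \ref{lem:approx} for $H_m$.

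Suppose instead that Item 2 of Lemma \ref{lem:main} fires, so $H_m$ contains $\Omega(n^k)$ copies of the nice $3$-graph $F_k$. We then apply Lemma \ref{lem:Sarkozy_Selkow} with $G = F_k$, our predetermined $\ell$, and $r_2 \geq e$. If Item 3(b) holds, then $H_m$ contains a copy of $G_\ell$, and taking $t := \lfloor e/e(G) \rfloor$ in Item 2 of that lemma (legal since $e \leq e(G_\ell)$ by the choice of $\ell$) yields a subconfiguration with $e' = t\cdot e(G) \in (e - e(G),\, e] \subseteq (e - \sqrt{e},\, e]$ and $v' - e' \leq k + \ell \leq 4k \leq 8\log e/\log\log e$, again giving Lemma \ref{lem:approx}. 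If instead Item 3(a) holds, we obtain $0 \leq j \leq \ell - 1$, $k + j \leq q \leq v(G_j) - 1$, and, for each $1 \leq i \leq r_2$, a configuration $C_i \subseteq H_m$ containing a copy of $G_j$ with $v(C_i) = q + i(v(G_j) - q)$ and $\Delta(C_i) \leq k + j$. Repeating the AP analysis of the previous paragraph verbatim (with $F_j, j$ replaced by $G_j, k+j$) gives $e(C_i) = e(B) + i(e(G_j) - e(B))$ with step $s \in [1, e(G_j)]$, starting at $e(C_1) = e(G_j) \leq e(G_{\ell-1}) < e$ (by minimality of $\ell$) and eventually exceeding $e$ once $r_2 \geq e$. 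Hence some $i^*$ satisfies $e(C_{i^*}) \in (e - e(G_j),\, e]$, and $C_{i^*}$ witnesses the second alternative of Claim \ref{claim:approx}.

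The main obstacle is the structural claim that the configurations produced by Item 1 of Lemma \ref{lem:main} and Item 3(a) of Lemma \ref{lem:Sarkozy_Selkow} really do arise as $i$ copies of $F_j$ (resp.\ $G_j$) glued along a common $q$-vertex base, since this is what powers the arithmetic-progression computation on $e(C_i)$. This information is implicit in the constructions that prove those lemmas but is not spelled out in their statements; extracting it carefully is the only non-routine step. The remaining bookkeeping --- choosing $n_0, r_1, r_2$ large enough uniformly in the (finitely many) possible values of $(j, q, s)$ and verifying $k + \ell \leq 8\log e/\log\log e$ --- follows immediately from the numerical inequalities $k \leq \log e/\log\log e$, $\ell \leq 3k$, and $k! \leq \sqrt{e}$ already established in this section.
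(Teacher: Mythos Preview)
Your approach reaches the right conclusion but takes an unnecessarily fragile route. The key difference from the paper is this: you try to control the \emph{edge count} $e(C_i)$ directly, claiming it forms an arithmetic progression; the paper instead controls the \emph{vertex count} $v'$, which the lemma statements already give exactly as $v' = q + i(v(F_j)-q)$ (resp.\ $q + i(v(G_j)-q)$), and then deletes surplus edges.

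Your route requires the structural claim that each $C_i$ is literally $i$ copies of $F_j$ (or $G_j$) glued along a $q$-vertex base, with no extra edges. As you yourself flag, this is not in the lemma statements: the configurations produced in the proofs are induced subgraphs $H[V]$, which may contain edges of $H$ beyond those coming from the glued copies (the proofs only assert $e_H(V) \ge e(B) + i\,s$). So your formula $e(C_i) = e(B) + i\,s$ is in general only a lower bound, and the ``arithmetic progression on edges'' is not guaranteed. You can repair this by passing to the union-of-copies subconfiguration, which does have the exact edge count and still satisfies the difference bound (because $\Delta(U) \ge j$, resp.\ $\ge k+j$, is established inside those proofs via niceness). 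But then your derivation ``$\Delta(C_i)\le j$ for $i\ge 2$ forces $q-e(B)\ge j$'' is circular: the lemma's difference bound is for $H[V]$, not for the sparser union, so you must invoke $\Delta(U)\ge j$ directly rather than deduce it.

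The paper's argument sidesteps all of this. Since $v'$ is an exact arithmetic progression in $i$ with step $v(F_j)-q \le v(F_j)-j = e(F_j) \le \sqrt{e}$, one picks $i$ maximal with $v' \le e+j$; the resulting configuration has $e' \ge v'-j$, and if $e' > e$ one deletes edges until $e' = v'-j \le e$. Maximality of $i$ then gives $e - e' < v(F_j)-q \le \sqrt{e}$. The $G_j$ case is identical, with the one extra observation that since $e(G_j) = v(G_j)-k-j \le v'-k-j$, one can perform the edge deletions while preserving a copy of $G_j$. This uses only the black-box statements of Lemmas~\ref{lem:main} and~\ref{lem:Sarkozy_Selkow}, and no inspection of their proofs.
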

\begin{proof}
For convenience, let us assume that $m = 1$.
We apply Lemma \ref{lem:main} to $H_1$ with parameters $k$, $r = e$ and $\varepsilon/(\ell+1)$. Suppose first that the assertion of Item 1 in Lemma \ref{lem:main} holds and let $3 \leq j \leq k-1$ and $0 \leq q \leq e(F_j) - 1$ be as in that item.
Let $i$ be the maximal integer satisfying $q + i \cdot (e(F_j) - q) \leq e$ and note that
$1 \leq i \leq e$. We may thus infer from Item 1 in Lemma \ref{lem:main} that $H_1$ contains a $(v',e')$-configuration with $e' = q + i \cdot (e(F_j) - q) \leq e$
and
\begin{equation*}\label{eq:v1}
v' - e' \leq j < k \leq 2\log e/\log\log e.
\end{equation*}
Observe that, by the maximality of $i$, we have
$e - e' < e(F_j) - q \leq e(F_j) \leq j! \leq k! \leq \sqrt{e}$. Hence, $H_1$ satisfies the assertion of Lemma \ref{lem:approx}, as required. This completes the proof for the case that Item 1 in Lemma \ref{lem:main} holds.

					
		Suppose from now on that the assertion of Item 2 in Lemma \ref{lem:main} holds, namely, that $H_1$ contains at least $\eta n^{k}$ copies of $F_k = G$. This means that we may apply Lemma \ref{lem:Sarkozy_Selkow} to $H_1$ (with $G = F_k$).
		By Item 3 of Lemma \ref{lem:Sarkozy_Selkow}, applied with $r = e$ and with $\eta$ in place of $\varepsilon$, the $3$-graph $H_1$ satisfies (at least) one of the following:
		\begin{enumerate}
			\item[(a)] There are some
			$0 \leq j \leq \ell-1$ and
			$0 \leq q \leq e(G_{j})-1$ such that, for every
			$1 \leq i \leq e$, $H_1$ contains a  $(v',e')$-configuration which contains a copy of $G_j$ and satisfies
			$v' - e' \leq k + j$ and
			$e' = q + i \cdot (e(G_{j}) - q)$.
			\item[(b)] $H_1$ contains a copy of $G_{\ell}$ (in fact, at least $\delta_{\ref{lem:Sarkozy_Selkow}}(\ell,r,\eta) \cdot n^{k+\ell}$ such copies).
		\end{enumerate}
		
		Suppose first that $H_1$ satisfies Item (b). Let $t \geq 0$ be the maximal integer satisfying $t \cdot e(G) \leq e$ and note that $t \leq e/e(G) \leq e(G_{\ell})/e(G)$, where the second inequality uses our choice of $\ell$.
		By Item 2 of Lemma \ref{lem:Sarkozy_Selkow}, $H_1$ contains a
		$(v',e')$-configuration with
		$v' - e' \leq k + \ell \leq 3k \leq 6\log e/\log\log e$ and
		$e' = t \cdot e(G) \leq e$. By our choice of $t$, we have
		$e - e' < e(G) = e(F_k) = 5k!/12 \leq k! \leq \sqrt{e}$. So in this case the assertion of Lemma \ref{lem:approx} indeed holds for $H_1$.
		
Finally, assume that $H_1$ satisfies Item (a) and
let $0 \leq j \leq \ell - 1$ and $0 \leq q \leq e(G_{j}) - 1$ be as in that item. Let $i$ be the maximal integer satisfying
$q + i \cdot (e(G_{j}) - q) \leq e$. Then $1 \leq i \leq e$.
We may thus rely on (a) above to conclude that $H_1$ contains a $(v',e')$-configuration which contains a copy of $G_j$ and satisfies $e' = q + i \cdot (e(G_{j}) - q) \leq e$ 
and $v' - e' \leq k+j$. 
Observe that the maximality of $i$ guarantees that $e - e' < e(G_j) - q \leq e(G_j)$. 
We conclude that $H_1$ indeed contains a $(v',e')$-configuration with the properties stated in the claim.
\end{proof}

	We now return to the proof of the lemma. If some $H_m$ ($1 \leq m \leq \ell+1$) satisfies the assertion of Lemma \ref{lem:approx}, then we are done. Otherwise, Claim \ref{claim:approx} implies that for each $1 \leq m \leq \ell+1$ there is $0 \leq j_m \leq \ell - 1$ such that $H_m$ contains a $(v',e')$-configuration which contains a copy of $G_{j_m}$ and satisfies
	$v' - e' \leq k + j_m$ and $e - e(G_{j_m}) \leq e' \leq e$. By the pigeonhole principle, there are two indices $1 \leq m \leq \ell+1$ whose $j_m$'s are equal. It follows that, for some $0 \leq j \leq \ell-1$, $H$ contains {\em edge-disjoint} subgraphs $G^*$ and $G'$ such that $G^*$ is isomorphic to $G_j$ and $G'$ satisfies $v(G') - e(G') \leq k + j$ and $e - e(G_j) \leq e(G') \leq e$.
		Let $t$ be the maximal integer satisfying
		$t \cdot e(G) \leq e - e(G')$ and note that $0 \leq t \leq e(G_j)/e(G)$ because $e - e(G') \leq e(G_j)$.
		Then, by Item 2 of Lemma \ref{lem:Sarkozy_Selkow} (applied to the copy $G^*$ of $G_j$ with $j$ in place of $\ell$), there is a subgraph $G''$ of $G^*$ such that $v(G'') - e(G'') \leq k + j$ and $e(G'') = t \cdot e(G)$. Our choice of $t$ implies that
		$0 \leq e - e(G') - e(G'') < e(G) = e(F_k) \leq k! \leq \sqrt{e}$. Let $G'''$ be the union of $G'$ and $G''$, so that $e(G''') = e(G') + e(G'')$. We see that
		$e - \sqrt{e} \leq e(G''') \leq e$ and
		$$
		v(G''') - e(G''') \leq v(G') - e(G') + v(G'') - e(G'') \leq
		2(k + j) \leq
		2(k + \ell) \leq 6k \leq 12 \log e / \log \log e.
		$$
		So we see that the assertion of the lemma holds with $G'''$ as the required $(v',e')$-configuration.

\subsection{Deriving Theorem \ref{thm:main} from Lemma \ref{lem:approx}}\label{subsec:main}
	
		Our goal is to show that for every $e \geq 3$ and $\varepsilon \in (0,1)$, there is $n_0 = n_0(e,\varepsilon)$ such that every $3$-graph with $n \geq n_0$ vertices and at least $\varepsilon n^2$ edges contains a $(v,e)$-configuration with $v - e \leq 26\log e/\log \log e$.
		As in the proof of Lemma \ref{lem:approx}, the required integer $n_0 = n_0(e,\varepsilon)$ will be chosen implicitly.
		The proof is by induction on $e$.
		Let $H$ be a $3$-graph with $n \geq n_0$ vertices and at least $\varepsilon n^2$ edges.
		First, we take care of the case where $e$ is small by using \eqref{eq:Sarkozy_Selkow}.
		Indeed, by \eqref{eq:Sarkozy_Selkow}, $H$ contains a $(v,e)$-configuration with
		$v - e \leq 2 + \lfloor \log_2 e \rfloor$. 
		If $e \leq \exp(2^{24})$, then we have
		$2 + \lfloor \log_2 e \rfloor \leq 2 + 24\log e/\log\log e \leq 26\log e/\log\log e$ (where the second inequality holds whenever $e \geq 3$), thus completing the proof in this case.
		So suppose from now on that $e > \exp(2^{24})$. 
		
		By Lemma \ref{lem:approx}, $H$ contains a $(v',e')$-configuration $F'$ satisfying
		$e - \sqrt{e} \leq e' \leq e$ and $v' - e' \leq 12\log e/\log\log e$.
		Set $e'' := e - e'$, noting that $0 \leq e'' \leq \sqrt{e}$. If $e'' \leq 15$, then, by adding at most $15$ edges to $F'$, one obtains a $(v,e)$-configuration with
		$v - e \leq v' + 3e'' - (e' + e'') = v' - e' + 2e'' \leq 12\log e/\log\log e + 30 \leq 26\log e/\log\log e$, as required. (Here the last inequality holds for every $e \geq 3$.) 
		So suppose from now on that $e'' \geq 16$.
		Let $H'$ be the $3$-graph obtained from $H$ by deleting the edges of $F'$. Since
		$e(H') \geq e(H) - e(F') \geq \varepsilon n^2 - e(F') \geq \frac{\varepsilon}{2}n^2$ (provided that $n$ is large enough), we may apply the induction hypothesis to $H'$, with parameter $e''$ in place of $e$, and thus obtain a $(v'',e'')$-configuration $F''$ which is edge-disjoint from $F'$ (because it is contained in $H'$) and satisfies
		$$
		v'' - e'' \leq \frac{26 \log e''}{\log \log e''} \leq
		\frac{26 \log \sqrt{e}}{\log \log \sqrt{e}} =
		\frac{13\log e}{\log\log e - \log 2} \leq \frac{14\log e}{\log\log e}.
		$$
		Here, the second inequality uses the fact that the function
		$x \mapsto \log x/\log\log x$ is monotone increasing for $x \geq 16$ and the last inequality holds whenever $e \geq \exp(2^{14})$.
	    Letting $F$ be the union of $F'$ and $F''$, we see that $e(F)=e(F')+e(F'')= e$ and $v(F) \leq v(F') + v(F'')$, implying that
		$$
		v(F) - e(F) \leq v(F') - e(F') + v(F'') - e(F'') \leq
		\frac{12\log e}{\log\log e} + \frac{14\log e}{\log\log e} =
		\frac{26\log e}{\log \log e}.
		$$
		This completes the proof of the theorem.
	
	\subsection{Proof of Proposition \ref{prop:reduction}}\label{subsec:reduction}
	Let $2 \leq k < r$, $e \geq 3$ and $d \geq 1$.
	Let $H$ be an $n$-vertex $r$-graph with
	$$e(H) > \left( \binom{r-k+2}{3} \cdot (e-1) + 1 \right) \cdot \frac{\binom{n}{k-2}}{\binom{r}{k-2}} \cdot f_3(n,e + 2 + d,e).$$
	Our goal is to show that $H$ contains a $(v,e)$-configuration with $v \leq (r-k)e + k + d$.
	By averaging, there are vertices $v_1,\dots,v_{k-2}$ such that more than $\left( \binom{r-k+2}{3} \cdot (e-1) + 1 \right) \cdot f_3(n,e + 2 + d,e)$ of the edges of $H$ contain $v_1,\dots,v_{k-2}$.
	Set
	$E_0 = \{X \setminus \{v_1,\dots,v_{k-2}\} : v_1,\dots,v_{k-2} \in X \text{ and } X \in E(H)\},$
	noting that $|E_0| > \left( \binom{r-k+2}{3} \cdot (e-1) + 1 \right) \cdot f_3(n,e + 2 + d,e)$ and that $|Y| = r - k + 2$ for each $Y \in E_0$.
	
	We now consider two cases. Suppose first that there is a triple
	$T \in \binom{V(H)}{3}$ and distinct $Y_1,\dots,Y_e \in E_0$ such that
	$T \subseteq Y_i$ for each $1 \leq i \leq e$. Setting
	$X_i = Y_i \cup \{v_1,\dots,v_{k-2}\}$ for each $1 \leq i \leq e$, we observe that
	$|X_1 \cup \dots \cup X_e| \leq (r - k - 1) \cdot e + k - 2 + 3 \leq (r-k)e + k$. It follows that $H$ contains a $(v,e)$-configuration with $v \leq (r-k)e + k$, thus completing the proof in this case.
	
	Suppose now that for each $T \in \binom{V(H)}{3}$ it holds that
	$\#\{Y \in E_0 : T \subseteq Y\} \leq e - 1$. Then, for each $Y \in E_0$, there are at most $\binom{r-k+2}{3}(e-1)$ sets $Y' \in E_0 \setminus \{Y\}$ such that
	$|Y \cap Y'| \geq 3$. This means that there exists $E_1 \subseteq E_0$ of size
	\begin{equation}\label{eq:reduction_projection}
	|E_1| \geq \frac{|E_0|}{\binom{r-k+2}{3}(e-1) + 1} > f_3(n,e+2+d,e)
	\end{equation}
	such that $|Y \cap Y'| \leq 2$ for each pair of distinct $Y,Y' \in E_1$.\footnote{	To see that such an $E_1$ indeed exists, consider an auxiliary graph on $E_0$ in which $Y,Y'$ are adjacent if and only if $|Y \cap Y'| \geq 3$ and recall the simple fact that every graph $G$ contains an independent set of size at least $\frac{v(G)}{\Delta(G)+1}$ (where $\Delta(G)$ is the maximum degree of $G$). Now take $E_1$ to be such an independent set.}	
	For each
	$Y \in E_1$, choose arbitrarily a triple $T_Y \in \binom{Y}{3}$. Let $H'$ be the $3$-graph on $V(H)$ whose edge-set is $E(H') = \{T_Y : Y \in E_1\}$. Then $e(H') = |E_1| > f_3(n,e+2+d,e)$, where the equality holds due to our choice of $E_1$ and the inequality due to \eqref{eq:reduction_projection}. It follows that $H'$ contains an $(e+2+d,e)$-configuration $F$. Now observe that the edge-set
	$\{Y \cup \{v_1,\dots,v_{k-2}\} : Y \in E_1 \text{ and } T_{Y} \in E(F) \}$
	spans in $H$ a $(v,e)$-configuration with
	$
	v \leq v(F) + (r - k - 1)e + k - 2 \leq
	e + 2 + d + (r - k - 1)e + k - 2 = (r - k)e + k + d,
	$
	as required.
	
\section{Proof of Lemma \ref{lem:main}}\label{sec:first_lemma}
	In this section we prove Lemma \ref{lem:main}. The construction of the $3$-graphs $F_k$ appearing in the statement of the lemma, as well as the proof that these $3$-graphs have the required properties, is done by induction on $k$.
	The inductive step, which constitutes the main part of the proof of Lemma \ref{lem:main}, is given by the following lemma.
	\begin{lemma}\label{lem:super_exponential}
		Let $F$ be a nice $3$-graph, put $k = v(F) - e(F)$ and assume $k \geq 3$. Then there exists a nice $3$-graph $F'$ such that $v(F') - e(F') = k+1$, 
		$e(F') = (k+1) \cdot e(F)$ and the following holds.
		For every $r \geq 1$ and $\varepsilon \in (0,1)$, there are $\delta = \delta_{\ref{lem:super_exponential}}(F,r,\varepsilon) \in (0,1)$ and $n_0 = n_0(F,r,\varepsilon)$ such that every $3$-graph $H$ with $n \geq n_0$ vertices and at least $\varepsilon n^{k}$ copies of $F$ satisfies (at least) one of the following:
		\begin{enumerate}
			\item There is $0 \leq q \leq e(F) - 1$ such that, for every $1 \leq i \leq r$, $H$ contains a $(v',e')$-configuration with $v'-e' \leq k$ and
			$e' = q + i\cdot(e(F) - q)$.
			\item $H$ contains at least $\delta n^{k+1}$ copies of $F'$.
		\end{enumerate}
	\end{lemma}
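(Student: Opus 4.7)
The plan is to adapt the strategy from the overview of Lemma \ref{lem:main} to a general nice $F$ with $\Delta(F)=k\ge 3$ and witnessing independent set $A\subseteq V(F)$ of size $k+1$. Let $\mathcal{E}$ be the collection of embeddings $\varphi:V(F)\to V(H)$ realizing the $\Omega(n^{k})$ copies of $F$. I colour each pair $\{\varphi,\varphi'\}\subseteq\mathcal{E}$ by the pair $(U,\sigma)$, where $U=\{u\in V(F):\varphi(u)=\varphi'(u)\}$ and $\sigma$ records the partial bijection between the subsets of $A$ whose $\varphi$- and $\varphi'$-images coincide. Since the number of colours depends only on $F$, a multicolour Ramsey argument combined with a greedy cleaning step produces one of two alternatives. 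Either (a) there exist $\varphi_{1},\dots,\varphi_{r}\in\mathcal{E}$ and $U\subseteq V(F)$ with $\varphi_{i}|_{U}=\varphi_{j}|_{U}$ for all $i,j$, $|U\cap A|\ge k-1$, and $U\not\subseteq A$ whenever $|U\cap A|=k-1$; or (b) there is $\mathcal{F}\subseteq\mathcal{E}$ of size $\Omega(n^{k})$ whose embeddings pairwise agree on at most $k-1$ vertices of $A$ (with agreement confined to $A$ in the boundary case) and whose $A$-images pairwise overlap in at most $k-1$ vertices.

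In case (a), niceness of $F$ forces $\Delta(U)\ge k$: Item~1 of Definition \ref{def:main} handles $|U\cap A|\ge k$, while Item~2 handles $|U\cap A|=k-1$ (applicable because $U\setminus A\ne\emptyset$). After passing to a sub-collection that is pairwise vertex-disjoint outside $U$, the union of the first $i\le r$ copies is a $(v',e')$-configuration with $v'-e'=ik-(i-1)\Delta(U)\le k$ and $v'=|U|+i(v(F)-|U|)$, which verifies Item~1 of the lemma with $q=|U|\in\{k,\dots,v(F)-1\}$. In case (b), I build a $k$-uniform auxiliary hypergraph $J$ on $V(H)$ by placing a $k$-uniform $(k+1)$-clique on $\varphi(A)$ for each $\varphi\in\mathcal{F}$. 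The image-overlap bound ensures these cliques are pairwise edge-disjoint, so Theorem \ref{thm:removal} in uniformity $k$ produces $\Omega(n^{k+1})$ $(k+1)$-cliques of $J$. A standard averaging shows that a positive fraction of these cliques $T=\{v_{0},\dots,v_{k}\}$ are \emph{generic}: each $k$-edge $T\setminus\{v_{i}\}$ is contributed by a distinct $\varphi_{i}\in\mathcal{F}$ with $\varphi_{i}(A)=T\setminus\{v_{i}\}\cup\{w_{i}\}$, the $w_{i}$'s are distinct vertices lying outside $T$, and the sets $\varphi_{i}(V(F)\setminus A)$ are pairwise disjoint and disjoint from $T\cup\{w_{0},\dots,w_{k}\}$. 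I then take $F'$ to be the abstract $3$-graph carried by such a configuration; direct counting gives $v(F')=(k+1)(e(F)+1)$, $e(F')=(k+1)e(F)$, and hence $\Delta(F')=k+1$, and the number of generic cliques supplies the required $\Omega(n^{k+1})$ copies of $F'$.

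The main obstacle will be verifying that $F'$ is itself nice, since this is what allows the induction to continue. The witnessing independent set will be $A':=T\cup\{w_{i^{*}}\}$ for any fixed $i^{*}$, of size $k+2=\Delta(F')+1$. Independence of $A'$ follows because every edge of $F'$ lies in a single copy $\varphi_{i}$, each copy intersects $A'$ inside $\varphi_{i}(A)$ only (using the disjointness guarantee on the $\varphi_{i}(V(F)\setminus A)$'s and the fact that $v_{i^{*}}\notin\varphi_{i^{*}}(V(F))$), and $\varphi_{i}(A)$ is independent in $F$. To verify Items~1 and 2 of Definition \ref{def:main} for a general $U'\subseteq V(F')$, I decompose $U'$ via the copies, $U'_{i}=U'\cap\varphi_{i}(V(F))$, identify each $U'_{i}$ with a subset of $V(F)$, and apply the niceness of $F$ with respect to $A$ to each $U'_{i}$. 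Summing the resulting local inequalities and carefully tracking the forced overlaps on $T\cup\{w_{0},\dots,w_{k}\}$ will yield the required global bounds on $\Delta(U')$ in terms of $|U'\cap A'|$. The bookkeeping is intricate and is where most of the technical work lies, but it is the natural extension of the case analysis sketched in the overview for the $F_{k}$'s.
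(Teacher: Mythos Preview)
Your overall strategy matches the paper's, but there is a concrete error in your choice of the witnessing set $A'$ for $F'$, and this set does \emph{not} satisfy Definition~\ref{def:main}. In the paper's notation (writing $x_1,\dots,x_{k+1}$ for your clique vertices $v_0,\dots,v_k$ and $x'_i$ for your $w_i$), your proposed set is $A' = \{x_1,\dots,x_{k+1},x'_{i^*}\}$. Now take $U = V(F_{i^*})$, the vertex set of the copy of $F$ in which $x'_{i^*}$ plays the role of the vertex $i^*\in A$. Then $\Delta(U) = v(F) - e(F) = k$, while $U \cap A' = (\{x_1,\dots,x_{k+1}\}\setminus\{x_{i^*}\}) \cup \{x'_{i^*}\}$ has size $k+1$, and $A' \not\subseteq U$ because $x_{i^*}\notin V(F_{i^*})$. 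So Item~1 of Definition~\ref{def:main} would require $\Delta(U)\geq k+1$, which fails. The correct choice, used in the paper, is the ``opposite'' set $A' = \{x'_1,\dots,x'_{k+1},x_1\}$: mostly the \emph{outer} vertices $x'_i$ together with a single inner vertex. The verification that this $A'$ works is the technical heart of the proof and is genuinely delicate; your sketch (``summing the resulting local inequalities and carefully tracking the forced overlaps'') does not confront this.

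A second gap is your omission of the random vertex-partitioning step. In the paper, one first partitions $V(H)$ into parts $(C_z)_{z\in V(F)}$ and keeps only ``good'' embeddings $\varphi$ with $\varphi(z)\in C_z$. This guarantees that $\varphi(i)=\varphi'(j)$ forces $i=j$, so that $|\varphi(A)\cap\varphi'(A)|$ equals $|\{i\in A:\varphi(i)=\varphi'(i)\}|$. Without this, two embeddings in your $\mathcal{F}$ could have $A$-images overlapping in $k$ vertices (so their cliques share an edge) while agreeing on fewer than $k-1$ vertices of $A$; your coloring by $(U,\sigma)$ does not obviously rule this out, and it also does not ensure that all ``generic'' cliques yield the \emph{same} abstract $3$-graph $F'$. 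The partitioning trick handles both issues at once.
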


		\begin{figure}\label{Fig:(6,3)}
		\centering
		\begin{tikzpicture}[scale = 2.5]
		\coordinate (v1) at (0,1.73);
		\coordinate (v2) at (0.5,0.865);
		\coordinate (v3) at (-0.5,0.865);
		\coordinate (v4) at (0,0);
		\coordinate (v5) at (1,0);
		\coordinate (v6) at (-1,0);
		
		\draw (v1) node[fill=black,circle,minimum size=2pt,inner sep=0pt,label=below:$v_1$] {};
		
		\foreach \i in {2,3,4,5,6}
		{
			\draw (v\i) node[fill=black,circle,minimum size=2pt,inner sep=0pt,label=$v_\i$] {};
		}
		
		
		\draw [rounded corners = 18] (0.8,-0.2) -- (1.35,0) -- (0.35,1.95) -- (-0.2,1.75) -- cycle;
		
		\draw [rounded corners = 18] (-0.8,-0.2) -- (-1.35,0) -- (-0.35,1.95) -- (0.2,1.75) -- cycle;
		
		\draw [rounded corners = 16] (-1.2,0.05) -- (-1.2,-0.4) -- (1.2,-0.4) -- (1.2,0.05) -- cycle;
		\end{tikzpicture}
		\caption{The $3$-uniform linear $3$-cycle}
	\end{figure}
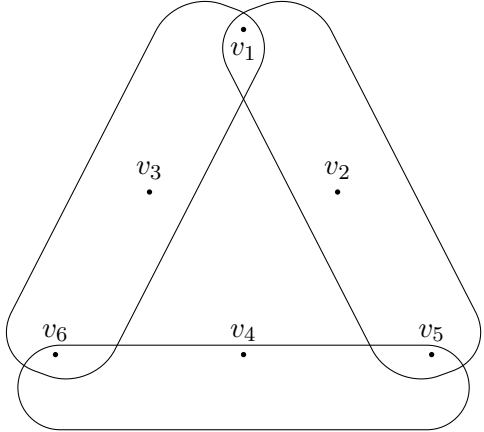
	
	Ideally, we would like to start the induction by invoking Lemma \ref{lem:super_exponential} with $F$ being an edge (so $k = \Delta(F) = 2$). As is the case with Lemma \ref{lem:Sarkozy_Selkow} (see the remark following that lemma), Lemma \ref{lem:super_exponential} does in fact work with $F$ being an edge, even though an edge is not nice as per Definition \ref{def:main}. The $3$-graph $F'$ supplied by Lemma \ref{lem:super_exponential} (when applied with $F$ being an edge) is the linear $3$-cycle (see Figure 1). In fact, applying Lemma \ref{lem:super_exponential} with $F$ being an edge recovers the proof of the (6,3)-theorem. 
	Unfortunately, the linear $3$-cycle is not nice (this time in a meaningful way; it really cannot be used as an input to Lemma \ref{lem:super_exponential}), 
	preventing us from continuing the induction. To make matters even worse, we know of no $3$-graph $F$ with difference $k = 3$ which can be used as a viable input to Lemma \ref{lem:super_exponential}. Indeed, note that in order for the lemma to be useful when applied with input $F$, we need to know that $F$ is abundant\footnote{We say that a $3$-graph $F$ is {\em abundant} in an $n$-vertex $3$-graph $H$ if $H$ contains $\Omega(n^{v(F) - e(F)})$ copies of $F$. In particular, a single edge is trivially abundant in every hypergraph with $\Omega(n^2)$ edges and the condition (resp. conclusion) of Lemma \ref{lem:super_exponential} can be stated as saying that $F$ (resp. $F'$) is abundant in $H$.} in every sufficiently large $n$-vertex $3$-graph with $\Omega(n^2)$ edges (or at least in every such $3$-graph that does not satisfy the conclusion of Theorem \ref{thm:main} for some other reason). Unfortunately, no such nice $F$ (of difference $3$) is known.

	In light of this situation, the base step of our induction will have to involve a nice $3$-graph $F$ with difference at least $4$. Fortunately, as stated in the following lemma, there does exist a nice $F$ of difference $4$ which can be shown to be abundant in every $3$-graph $H$ with $n$ vertices and $\Omega(n^2)$ edges, unless $H$ satisfies the assertion of Theorem \ref{thm:main} for a trivial reason.
	
	\begin{lemma}\label{lem:base_case}
		There is a nice $3$-graph $F$ with $v(F) = 14$ and $e(F) = 10$ having the following property.
		For every $r \geq 1$ and $\varepsilon \in (0,1)$, there are $\delta = \delta_{\ref{lem:base_case}}(r,\varepsilon) \in (0,1)$ and $n_0 = n_0(r,\varepsilon)$ such that every $3$-graph $H$ with $n \geq n_0$ vertices and at least $\varepsilon n^2$ edges satisfies (at least) one of the following:
		\begin{enumerate}
			\item For every $1 \leq i \leq r$, $H$ contains a $(3i+3, 3i)$-configuration.
			\item $H$ contains at least $\delta n^4$ copies of $F$.
		\end{enumerate}
	\end{lemma}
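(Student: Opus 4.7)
My plan is to mimic the proof strategy of Lemma~\ref{lem:super_exponential}, adapting it to handle the fact that the natural starting point — the linear $3$-cycle $C$ (with $v(C)=6$, $e(C)=3$, $\Delta(C)=3$) — is not nice. I would first define $F$ explicitly as the $3$-graph obtained by taking a carefully chosen $4$-subset $B \subseteq V(C)$ and gluing four copies of $C$ along $B$, tuned so that the resulting $3$-graph has $v(F)=14$ and $e(F)=10$, hence $\Delta(F)=4$, and admits an independent set $A \subseteq V(F)$ of size $5$ verifying the two conditions of Definition~\ref{def:main}. The choice of $B$ and of the overlap pattern is pinned down by the requirement that $F$ be nice, and I would verify this by a finite case analysis over all $U \subseteq V(F)$.

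To prove the dichotomy for $H$, I would proceed as follows. The $(6,3)$-theorem of Ruzsa and Szemer\'{e}di (equivalently, Theorem~\ref{thm:removal} applied to $C$) implies that $H$ contains $\Omega(n^3)$ copies of the linear $3$-cycle. For each embedding $\varphi\colon V(C) \hookrightarrow V(H)$ I would colour $\varphi$ by the intersection pattern of its $B$-image with the $B$-images of the other embeddings, and apply a multicolour Ramsey plus cleanup step. This yields either (a) $r$ embeddings $\varphi_1,\dots,\varphi_r$ that all agree on a common subset containing most of $B$, from which the union of the $\varphi_i$'s easily produces the $(3i+3,3i)$-configurations of Item~1; or (b) a subfamily $\mathcal{F}$ of size $\Omega(n^3)$ whose $B$-images are pairwise ``well-separated'' in the sense that no two of them share more than two vertices outside $B$-collisions. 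In case (b), build a $3$-uniform auxiliary hypergraph $J$ by placing a $4$-clique on $\varphi(B)$ for each $\varphi \in \mathcal{F}$; the separation makes these $4$-cliques pairwise edge-disjoint, so Theorem~\ref{thm:removal} yields $\Omega(n^4)$ $4$-cliques in $J$. A counting argument mirroring the one in Lemma~\ref{lem:super_exponential} then shows that almost all such $4$-cliques extend to copies of $F$ in $H$, giving the required $\delta n^4$ copies.

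The main obstacle, I expect, is the construction and verification of $F$ itself: in the inductive setting of Lemma~\ref{lem:super_exponential} the niceness of the input $F_{k-1}$ is bequeathed to $F_k$ essentially for free, whereas here we must engineer both the subset $B$ inside the non-nice $C$ and the identifications between the four copies of $C$ so that $F$ comes out nice with the correct parameters $(14,10)$. A secondary technical point is that extracting $(3i+3,3i)$-configurations in case (a) cannot invoke niceness of $C$, and therefore requires a tailored argument exploiting the specific structure of the linear $3$-cycle to control the difference $v'-e'$ of the union of the $\varphi_i$'s.
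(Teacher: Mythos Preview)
Your overall strategy matches the paper's: the paper defines the explicit $(14,10)$-configuration $F$ introduced by Solymosi and Solymosi, cites their work for the dichotomy (Items~1--2), and devotes the entire proof to verifying that this $F$ is nice, via a detailed case analysis built on Claim~\ref{claim:(6,3)} and the decomposition of $F$ into four overlapping linear $3$-cycles.

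There is, however, a genuine gap in your case~(b). You assert that ``almost all such $4$-cliques extend to copies of $F$,'' by mirroring Lemma~\ref{lem:super_exponential}. But that argument works precisely because the input there is nice: Item~2 of Definition~\ref{def:main} is exactly what forces, for any two $\varphi_i,\varphi_j\in\mathcal{F}^*$, the agreement set $U(\varphi_i,\varphi_j)$ to lie inside $A$, so the $k+1$ copies glue cleanly along $A$ to form a single canonical $F'$. For the linear $3$-cycle $C$ with $B=\{v_1,v_2,v_3,v_4\}$, Item~2 fails --- specifically for $U=\{v_1,v_2,v_5\}$ and $U=\{v_1,v_3,v_6\}$, both of which have $\Delta(U)=2<3$. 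These two overlap patterns cannot be pushed into case~(a), since $r$ copies of $C$ agreeing on such a $U$ yield a union with $3+3r$ vertices and only $1+2r$ edges, which is not a $(3i+3,3i)$-configuration. Hence they must be permitted inside your well-separated family $\mathcal{F}^*$. Consequently, the four copies of $C$ arising from a colourful $4$-clique may or may not share extra vertices outside $B$, and their union is one of several non-isomorphic difference-$4$ configurations rather than a single fixed $F$. The paper's $F$ is precisely the one in which \emph{both} bad overlaps are realised (copies indexed $3,4$ share the $v_5$-vertex and copies $2,4$ share the $v_6$-vertex), which is why it has $14$ vertices and $10$ edges rather than the naive $16$ and $12$. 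Ensuring that this particular pattern occurs $\Omega(n^4)$ times is the substance of the Solymosi--Solymosi argument and is not the routine counting step you describe. Your description of $F$ as four copies of $C$ ``glued along $B$'' likewise misses these two forced identifications; without them you cannot reach $(14,10)$, and the niceness verification you propose (with a $5$-element $A$) depends crucially on the resulting asymmetric structure --- in the paper, $A=\{w_4,w'_1,w'_2,w'_3,w'_4\}$ mixes one ``old'' vertex with four ``new'' ones.
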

	
	We note that the $3$-graph $F$ in the above lemma played a key role in the proof of Solymosi and Solymosi~\cite{Solymosi} that $f_3(n,14,10) = o(n^2)$. As such, the abundance statement regarding $F$ was already proved in \cite{Solymosi}. Consequently, our main task in the proof of Lemma \ref{lem:base_case} is to show that $F$ is nice.
	
	The rest of this section is organized as follows. In Section \ref{subsec:first_lemma}, we derive Lemma \ref{lem:main} from Lemmas \ref{lem:super_exponential} and \ref{lem:base_case}. We then prove these two lemmas in Sections \ref{subsec:induction_step} and \ref{subsec:base}, respectively.
	
	\subsection{Deriving Lemma \ref{lem:main} from Lemmas \ref{lem:super_exponential} and \ref{lem:base_case}}\label{subsec:first_lemma}
		Let $F_3$ be the linear $3$-cycle (which has $6$ vertices and $3$ edges).
		Let $F_4$ be the nice $3$-graph whose existence is guaranteed by Lemma \ref{lem:base_case}. For each $k \geq 5$, let $F_k$ be the nice $3$-graph $F'$ obtained by applying Lemma \ref{lem:super_exponential} with $F := F_{k-1}$.
		Then it is easy to check by induction that, for every $k \geq 4$, it holds that $v(F_k) - e(F_k) = k$ and $e(F_k) = 5k!/12$. Moreover, $F_k$ is nice for every $k \geq 4$.
		
		Let $r \geq 1$ and $\varepsilon \in (0,1)$. We define a sequence $(\delta_k)_{k \geq 4}$ as follows. Let $\delta_4 = \delta_{\ref{lem:base_case}}(r,\varepsilon)$ be defined via Lemma \ref{lem:base_case} and, for each $k \geq 4$, let
		$\delta_{k+1} = \delta_{\ref{lem:super_exponential}}\left( F_k,r,\delta_k \right)$ be defined via Lemma \ref{lem:super_exponential}. We now show by induction on $k \geq 4$ that the assertion of Lemma \ref{lem:main} holds with $\eta = \eta_{\ref{lem:main}}(k,r,\varepsilon) := \delta_k$.
		For $k = 4$, Lemma \ref{lem:base_case} readily implies that $H$ either satisfies the assertion of Item 2 of Lemma \ref{lem:main} or satisfies the assertion of Item 1 with $j = 3$ and $q = 0$.
		Let now $k \geq 4$, assume that the assertion of Lemma \ref{lem:main} holds for $k$, and let us show that it also holds for $k+1$. By the assertion for $k$, $H$ satisfies the assertion of (at least) one of the items 1-2 of Lemma \ref{lem:main}. If Item 1 is satisfied, then it is also satisfied when $k$ is replaced with $k+1$, so we are done. Suppose then that $H$ satisfies the assertion of Item 2, namely, that $H$ contains at least $\delta_k \cdot n^{k}$ copies of $F_k$. Then, by Lemma \ref{lem:super_exponential} (with parameters $F = F_k$ and $\delta_k$ in place of $\varepsilon$), either $H$ satisfies the assertion of Item 1 in Lemma \ref{lem:main} for parameter $k+1$ (with $j = k$) or it contains at least $\delta_{k+1} \cdot n^{k+1} = \eta_{\ref{lem:main}}(k+1,r,\varepsilon) \cdot n^{k+1}$ copies of $F_{k+1}$, as required by Item 2.
	
	\subsection{Proof of Lemma \ref{lem:super_exponential}}\label{subsec:induction_step}
		Let $A \subseteq V(F)$ be as in Definition \ref{def:main}. It will be convenient to set $v := v(F)$ and to assume (without loss of generality) that $V(F) = [v]$ and $A = [k+1] \subseteq [v]$.
		The required nice $3$-graph $F'$ is defined as follows: take vertices $x_1,\dots,x_{k+1},x'_1,\dots,x'_{k+1}$ and, for each $1 \leq i \leq k+1$, add a copy $F_i$ of $F$ in which $x_j$ plays the role of $j \in V(F)$ for each $j \in [k+1] \setminus \{i\}$, $x'_i$ plays the role of $i \in V(F)$ and all other $v(F) - k - 1$ vertices are new.
		
		Let us calculate the number of vertices and edges in $F'$. First, as
		$A \subseteq V(F)$ is independent, the copies $F_1,\dots,F_{k+1}$ (which comprise $F'$) do not share edges. Hence, $e(F') = (k+1) \cdot e(F)$. Second, we have
		$
		v(F') = k + 1 + (k+1) \cdot (v(F) - k) = k + 1 + (k+1) \cdot e(F) = e(F') + k + 1,
		$
		as required.
		
		We now show that $F'$ is nice. We will show that $F'$ satisfies the requirements of Definition \ref{def:main} with respect to the set $A' := \{x'_1,\dots,x'_{k+1},x_1\}$. (We remark that in the definition of $A'$ we could replace $x_1$ with any other vertex among $x_1,\dots,x_{k+1}$.) For the rest of the proof, we set $X = \{x_1,\dots,x_{k+1}\}$, $X' = \{x'_1,\dots,x'_{k+1}\}$ and
		$A_i = (X \setminus \{x_i\}) \cup \{x'_i\}$ for each $1 \leq i \leq k+1$.
		Observe that, for each $1 \leq i \leq k + 1$, the vertices of $A_i$ are precisely the vertices which play the roles of the vertices of $A = \{1,\dots,k+1\} \subseteq V(F)$ in the copy $F_i$ of $F$.

		It is evident that $|A'| = k+2$
		and easy to see that $A'$ is independent in $F'$.
		Our goal then is to show that every $U \subseteq V(F')$ satisfies the assertion of Items 1-2 in Definition \ref{def:main} (with $A'$ in place of $A$).
		So let $U \subseteq V(F')$ and put $U_i = U \cap V(F_i)$ for each
		$1 \leq i \leq k+1$. Since every vertex of $X$ belongs to exactly $k$ of the copies $F_1,\dots,F_{k+1}$ and every other vertex of $F'$ belongs to exactly one of these copies, we have
		$$
		|U| = \sum_{i = 1}^{k+1}{|U_i|} - (k-1)|U \cap X|.
		$$
		Since $F_1,\dots,F_{k+1}$ are pairwise edge-disjoint, we have
		$$
		e(U) = \sum_{i = 1}^{k+1}{e(U_i)}.
		$$
		Subtracting the above two equalities, we get
		\begin{equation}\label{eq:Delta(U)}
		\Delta(U) = \sum_{i = 1}^{k+1}{\Delta(U_i)} - (k-1)|U \cap X|.
		\end{equation}
		For each $1 \leq i \leq k+1$, it follows from the niceness of $F$ (and the fact that $A_i$ plays the role of $A$ in the copy $F_i$ of $F$) that
		\begin{equation}\label{eq:Delta(U_i)}
		\Delta(U_i) \geq |U_i \cap A_i| - \mathds{1}_{A_i \subseteq U_i}.
		\end{equation}
		Setting $s := \#\{1 \leq i \leq k+1 : A_i \subseteq U_i\}$, we plug \eqref{eq:Delta(U_i)} into \eqref{eq:Delta(U)} to obtain
		\begin{equation}\label{eq:Delta(U)_1}
		\begin{split}
		\Delta(U) &\geq
		\sum_{i = 1}^{k+1}{|U_i \cap A_i|} - (k-1)|U \cap X| - s
		=
		|U \cap X| + |U \cap X'| - s
		\\ &=
		|U \cap A'| + |U \cap \{x_2,\dots,x_{k+1}\}| - s.
		\end{split}
		\end{equation}
		The first equality in \eqref{eq:Delta(U)_1} holds because $A_1 \cup \dots \cup A_{k+1} = X \cup X'$ and every element of $X$ (resp. $X'$) belongs to exactly $k$ (resp. 1) of the sets $A_1,\dots,A_{k+1}$. 
		The second equality in \eqref{eq:Delta(U)_1} is immediate from the definition of $A'$. 
		
		We first prove that
		$\Delta(U) \geq |U \cap A'| - \mathds{1}_{A' \subseteq U}$, as required by Item 1 in Definition \ref{def:main}.
		If $s = 0$, then \eqref{eq:Delta(U)_1} readily gives $\Delta(U) \geq |U \cap A'|$. Suppose then that $s \geq 1$ and let $1 \leq i \leq k+1$ be such that $A_i \subseteq U_i$. Then
		$\{x_1,\dots,x_{k+1}\} \setminus \{x_i\} \subseteq U$, implying that
		$|U \cap \{x_2,\dots,x_{k+1}\}| \geq k-1$. Furthermore, if $s \geq 2$, then $\{x_1,\dots,x_{k+1}\} \subseteq U$, so that
		$|U \cap \{x_2,\dots,x_{k+1}\}| = k$. 
		Thus, by \eqref{eq:Delta(U)_1}, if $s = 1$, then $\Delta(U) \geq |U \cap A'| + k-2$, while if $s \geq 2$, then $\Delta(U) \geq |U \cap A'| + k - s$. 
		This in particular means that if $1 \leq s \leq k-1$, then
		$\Delta(U) \geq |U \cap A'| + 1$, as $k \geq 3$ by the assumption of the lemma. It also follows that $\Delta(U) \geq |U \cap A'| - \mathds{1}_{s = k+1}$. 
	 	Observe that if $s = k+1$, then $A_i \subseteq U_i$ for every $1 \leq i \leq k+1$, so that $X \cup X' \subseteq U$ and hence $A' \subseteq U$. So we indeed have $\Delta(U) \geq |U \cap A'| - \mathds{1}_{A' \subseteq U}$, \nolinebreak as required.
		
		Next, we assume that $|U \cap A'| \leq k$ and $U \setminus A' \neq \emptyset$ and show that in this case $\Delta(U) \geq |U \cap A'| + 1$ (as required by Item 2 in Definition \ref{def:main}). The assumption that $|U \cap A'| \leq k$ implies that $s \leq k-1$, because if $s \geq k$, then $|U \cap X'| \geq k$ and $x_1 \in U$, which means that $|U \cap A'| \geq k+1$. We already saw that $\Delta(U) \geq |U \cap A'| + 1$ if $1 \leq s \leq k-1$,
		so it remains to handle the case that $s = 0$, namely, that $A_i \not\subseteq U_i$ for each $1 \leq i \leq k+1$. If 
		$U \cap \{x_2,\dots,x_{k+1}\} \neq \emptyset$, then \eqref{eq:Delta(U)_1} readily implies that $\Delta(U) \geq |U \cap A'| + 1$ (as $s = 0$). So suppose that $U \cap \{x_2,\dots,x_{k+1}\} = \emptyset$.
		Since $U \setminus A' \neq \emptyset$, there is $1 \leq j \leq k+1$ such that $U_j \setminus A' \neq \emptyset$. 
		Then $U_j \setminus A_j \neq \emptyset$ because $A_j \subseteq A' \cup \{x_2,\dots,x_{k+1}\}$ and $U \cap \{x_2,\dots,x_{k+1}\} = \emptyset$. 
		Moreover, $|U_j \cap A_j| \leq 2 \leq k-1$ because $A_j \setminus \{x_2,\dots,x_{k+1}\} \subseteq \{x_1,x'_j\}$ and $U \cap \{x_2,\dots,x_{k+1}\} = \emptyset$. 
		Now it follows from the niceness of $F$ (or, more precisely, of the copy $F_j$ of $F$) that $\Delta(U_j) \geq |U_j \cap A_j| + 1$, by Item 2 of Definition \ref{def:main}. 
		Moreover, by \eqref{eq:Delta(U_i)}, we have $\Delta(U_i) \geq |U_i \cap A_i|$ for each $1 \leq i \leq k+1$ (here we use that $A_i \not\subseteq U_i$ for each $1 \leq i \leq k+1$).
		By plugging these facts into \eqref{eq:Delta(U)}, in a manner similar to the derivation of \eqref{eq:Delta(U)_1}, we obtain
		$$
		\Delta(U) \geq |U_j \cap A_j| + 1 +
		\sum_{i \in [k+1] \setminus \{j\}}{|U_i \cap A_i|} - (k-1)|U \cap X| =
		|U \cap X| + |U \cap X'| + 1 \geq |U \cap A'| + 1,
		$$
		as required.
		
		Having proved that $F'$ is nice, we go on to show that the assertion of the lemma holds. Given $r \geq 1$ and $\varepsilon \in (0,1)$, we set
		$$
		\delta = \delta_{\ref{lem:super_exponential}}(F,r,\varepsilon) =
		\frac{1}{2}
		\gamma\left( k, \; 2^{-v (1 + 2^{v}r)} \cdot v^{-v} \cdot \varepsilon \right)
		$$
		and $n_0 = n_0(F,r,\varepsilon) = 1/\delta$,
		where $\gamma$ is from Theorem \ref{thm:removal} and $v = v(F)$ as before.
		
		Let $H$ be a $3$-uniform hypergraph with $n \geq n_0$ vertices and
		at least $\varepsilon n^{k}$ copies of $F$.
		Partition the vertices of $H$ randomly into sets $C_1,\dots,C_{v}$ by choosing, for each vertex $x \in V(H)$, a part $C_i$ ($1 \leq i \leq v$) uniformly at random and independently (of the choices made for all the other vertices of $H$) and placing $x$ in this part. A copy of $F$ in $H$ will be called {\em good} if, for each $i = 1,\dots,v$, the vertex playing the role of $i$ in this copy is in $C_i$.
		Since $H$ contains at least $\varepsilon n^k$ copies of $F$, there are in expectation at least $v^{-v} \cdot \varepsilon n^k$ good copies of $F$. So fix a partition $C_1,\dots,C_{v}$ with at least this number of good copies of $F$ and denote the set of these copies by $\mathcal{F}$.
		It will be convenient to identify each good copy of $F$ with the corresponding embedding
		$\varphi : V(F) \rightarrow V(H)$ which maps each $i \in [v] = V(F)$ to a vertex in $C_i$.
		So we will assume that the elements of $\mathcal{F}$ are such mappings.
		
		We now define an auxiliary graph $\mathcal{G}$ on $\mathcal{F}$ as follows: for each pair $\varphi_1,\varphi_2 \in \mathcal{F}$, we let $\{\varphi_1,\varphi_2\}$ be an edge in $\mathcal{G}$ if and only if the set $U := U(\varphi_1,\varphi_2) := \{i \in V(F) : \varphi_1(i) = \varphi_2(i)\}$ satisfies either
		$|U \cap A| \geq k$ or $|U \cap A| = k-1$ and $U \setminus A \neq \emptyset$. Here, $A \subseteq V(F)$ is the set from Definition \ref{def:main}. 
		We distinguish between two cases. Suppose first that there is $\varphi \in \mathcal{F}$ whose degree in $\mathcal{G}$ is at least
		$$
		d := 2^{v (1 + 2^{v}r)}.
		$$
		Let $\varphi_1,\dots,\varphi_d$ be distinct neighbours of $\varphi$ in $\mathcal{G}$.
		By the pigeonhole principle, there is $I_0 \subseteq [d]$ of size at least $2^{-v}d = 2^{v 2^{v}r}$ and a set $U_0 \subseteq V(F)$ such that, for all $i \in I_0$, it holds that $U(\varphi,\varphi_i) = U_0$. Note that, by the definition of $\mathcal{G}$, we have either $|U_0 \cap A| \geq k$ or $|U_0 \cap A| = k-1$ and $U_0 \setminus A \neq \emptyset$.
		We now consider the complete graph on $I_0$ and color each edge $\{i,j\} \in \binom{I_0}{2}$ of this graph with color $U(\varphi_i,\varphi_j)$.
		A well-known bound for multicolor Ramsey numbers (see \cite{CFS}) implies that in every $c$-coloring of the edges of the complete graph on $c^{cr}$ vertices, there is a monochromatic complete subgraph on $r$ vertices. Applying this claim with $c = 2^{v}$, we conclude that there is $I \subseteq I_0$ of size $|I| = r$ and a set $U \subseteq V(F)$ such that $U(\varphi_i,\varphi_j) = U$ for all $\{i,j\} \in \binom{I}{2}$. Observe that, for each $\{i,j\} \in \binom{I}{2}$, we have $U = U(\varphi_i,\varphi_j) \supseteq U(\varphi,\varphi_i) \cap U(\varphi,\varphi_j) = U_0$. This implies that either $|U \cap A| \geq k$ or $|U \cap A| = k-1$ and $U \setminus A \neq \emptyset$. Our choice of $A$ via Definition \ref{def:main} implies that in both cases $\Delta(U) \geq k$. Note also that $U \neq V(F)$ because the copies of $F$ corresponding to $(\varphi_i : i \in I)$ are \nolinebreak distinct.
		
		We now show that the assertion of Item 1 in the lemma holds.
		Suppose without loss of generality that $I = \{1,\dots,r\}$ and write $V_i := \varphi_i(V(F) \setminus U) \subseteq V(H)$ for $1 \leq i \leq r$. Note that $V_1,\dots,V_r$ are pairwise disjoint. We also put $W := \varphi_1(U) = \dots = \varphi_r(U)$.
		Now, fix any $1 \leq i \leq r$ and set $V := V_1 \cup \dots \cup V_i \cup W$. Then
		$|V| = |U| + i \cdot (v(F) - |U|) = i \cdot v(F) - (i-1) \cdot |U|$ and
		$
		e_H(V) \geq e_{F}(U) + i \cdot (e(F) - e_{F}(U)) = i \cdot e(F) - (i-1) \cdot e_{F}(U).
		$
		It follows that
		\begin{align*}
		|V| - e_{H}(V) &\leq i \cdot (v(F) - e(F)) - (i-1)(|U| - e_{F}(U))
		=
		i \cdot k - (i-1) \cdot \Delta(U)
		\\ &\leq i \cdot k - (i-1) \cdot k = k.
		\end{align*}
		Setting $q := e_F(U)$, note that $q \leq e(F) - 1$ because $e_F(U) = |U| - \Delta(U) \leq |U| - k \leq v(F) - 1 - k = e(F) - 1$, as $U \neq V(F)$. From the above, we see that $H[V]$ contains a $(v',e')$-configuration with $v' - e' \leq k$ and $e' = q + i \cdot (e(F) - q)$. 
		So the assertion of Item 1 of the lemma holds with this choice of $q$. 
		This completes the proof in the case that $\mathcal{G}$ has a vertex of degree at least $d$.
		
		From now on we assume that the maximum degree of $\mathcal{G}$ is strictly smaller than $d$ and prove that the assertion of Item 2 in the lemma holds.
		Let $\mathcal{F}^* \subseteq \mathcal{F}$ be an independent set\footnote{Here we use the simple fact (which was already used in Section \ref{subsec:reduction}) that every graph $G$ has an independent set of size at least $v(G)/(\Delta(G) + 1)$, where $\Delta(G)$ is the maximum degree of $G$.} in $\mathcal{G}$ of size at least $v(\mathcal{G})/d = |\mathcal{F}|/d$.
		Recall that we identify $V(F)$ with $[v]$ and $A$ with $[k+1]$.
		We now define an auxiliary $k$-uniform $(k+1)$-partite hypergraph $J$ with parts $C_1,\dots,C_{k+1}$, as follows. For each $\varphi \in \mathcal{F}^*$, put a $k$-uniform $(k+1)$-clique in $J$ on the vertices $\varphi(1) \in C_1,\dots,\varphi(k+1) \in C_{k+1}$. We denote this clique by $K_{\varphi}$.
		Note that, by the definition of $J$, every edge of $J$ is contained in a copy of $F$ in $H$, which corresponds to some embedding $\varphi \in \mathcal{F}^*$.
		
		We claim that the cliques $(K_{\varphi} : \varphi \in \mathcal{F}^*)$ are pairwise edge-disjoint. So fix any distinct $\varphi_1,\varphi_2 \in \mathcal{F}^*$ and suppose, for the sake of contradiction, that the cliques $K_{\varphi_1},K_{\varphi_2}$ share an edge. Then there is $W \subseteq A = [k+1]$ of size $|W| = k$ such that $\varphi_1(i) = \varphi_2(i)$ for every $i \in W$. It follows that $W \subseteq U := U(\varphi_1,\varphi_2)$ and hence $|U \cap A| \geq |W| = k$.
		But this means that $\varphi_1$ and $\varphi_2$ are adjacent in $\mathcal{G}$, contradicting the fact that $\mathcal{F}^*$ is an independent set of $\mathcal{G}$.
		So the cliques $(K_{\varphi} : \varphi \in \mathcal{F}^*)$ are indeed pairwise edge-disjoint. This means that $J$ contains a collection of $|\mathcal{F}^*| \geq |\mathcal{F}|/d
		\geq 2^{-v (1 + 2^{v}r)} \cdot v^{-v} \cdot \varepsilon n^k$
		pairwise edge-disjoint $(k+1)$-cliques. By Theorem \ref{thm:removal} and our choice of $\delta = \delta(F,r,\varepsilon)$, the number of $(k+1)$-cliques in $J$ is at least $2\delta n^{k+1}$.
		
		A $(k+1)$-clique $K$ in $J$ is called {\em colorful} if it is not equal to $K_{\varphi}$ for any $\varphi \in \mathcal{F}^*$. Note that all but at most $n^k$ of the $(k+1)$-cliques in $J$ are colorful (because the non-colorful cliques are pairwise edge-disjoint). It follows that $J$ contains at least $2\delta n^{k+1} - n^k \geq \delta n^{k+1}$ colorful $(k+1)$-cliques (here we use our choice of $n_0$).
		
		We will show that each colorful $(k+1)$-clique gives rise to a copy of $F'$ in $H$. 
		So fix any colorful $(k+1)$-clique $K = \{c_1,\dots,c_{k+1}\}$, where $c_i$ is the unique vertex in $K \cap C_i$ for each $1 \leq i \leq k+1$. By the definition of $J$, for each $i \in [k+1]$, there is $\varphi_i \in \mathcal{F}^*$ such that $\varphi_i(j) = c_j$ for every $j \in [k+1] \setminus \{i\}$.
		We claim that $\varphi_1,\dots,\varphi_{k+1}$ are pairwise distinct. Suppose, for the sake of contradiction, that $\varphi_i = \varphi_{i'} =: \varphi$ for some $1 \leq i < i' \leq k+1$. Then, for each $1 \leq j \leq k+1$, we have $\varphi(j) = c_j$ because one of $i,i'$ does not equal $j$. So we see that $K = K_{\varphi}$, contradicting the assumption that $K$ is colorful. We conclude that $\varphi_1,\dots,\varphi_{k+1}$ are indeed pairwise distinct. It now follows that $\varphi_i(i) \neq c_i$ for each $1 \leq i \leq k + 1$. Indeed, if $\varphi_i(i) = c_i$ then, fixing any $j \in [k+1] \setminus \{i\}$, we have $\varphi_i(\ell) = \varphi_j(\ell)$ for each $\ell \in [k+1] \setminus \{j\}$, contradicting the fact that $K_{\varphi_i}$ and $K_{\varphi_j}$ are edge-disjoint.
		
		Recall that $F'$ consists of vertices $x_1,\dots,x_{k+1},x'_1,\dots,x'_{k+1}$ and copies $F_1,\dots,F_{k+1}$ of $F$ such that the vertex playing the role of $j \in [k+1] \subseteq V(F)$ in $F_i$ is $x_j$ if $j \neq i$ and $x'_j$ if $j = i$ (for every $1 \leq i,j \leq k+1$) and $F_1,\dots,F_{k+1}$ do not intersect outside of $X = \{x_1,\dots,x_{k+1}\}$.
		Now let $\varphi = \varphi_K : V(F') \rightarrow V(H)$ be the function which, for each $1 \leq i \leq k+1$, maps $x_i$ to $c_i$ and agrees with $\varphi_i$ on the vertices of $F_i$ (where we identify $V(F_i)$ with $V(F)$). Then $\varphi(x_i) = c_i$ and $\varphi(x'_i) = \varphi_i(i)$ for each $1 \leq i \leq k+1$.
		It is not hard to see that in order to show that $\varphi$ is an embedding of $F'$ into $H$ it is enough to verify that
		$\text{Im}(\varphi_i) \cap \text{Im}(\varphi_j) = \{c_1,\dots,c_{k+1}\} \setminus \{c_i,c_j\}$ for each $1 \leq i < j \leq k+1$. So fix any $1 \leq i < j \leq k+1$ and consider the set
		$U = U(\varphi_i,\varphi_j) = \{\ell \in V(F) : \varphi_i(\ell) = \varphi_j(\ell)\}$. 
		We need to show that $U = [k+1] \setminus \{i,j\} = A \setminus \{i,j\}$, which would imply that $\text{Im}(\varphi_i) \cap \text{Im}(\varphi_j) = \{c_1,\dots,c_{k+1}\} \setminus \{c_i,c_j\}$, as required. 
		We have $U \cap [k+1] = [k+1] \setminus \{i,j\}$, so 
		$|U \cap A| = k-1$. If 
		$U \neq U \cap [k+1]$, then $U \setminus A \neq \emptyset$,
		so that $\varphi_i$ and $\varphi_j$ are adjacent in $\mathcal{G}$, contradicting the fact that $\varphi_i,\varphi_j \in \mathcal{F}^*$ and that $\mathcal{F}^*$ is an independent set of $\mathcal{G}$. Hence, $U = [k+1] \setminus \{i,j\}$, as required. 
		We have thus shown that each colorful $(k+1)$-clique in $J$ gives rise to a copy of $F'$ in $H$. It is also easy to see that these copies are pairwise distinct. It follows that $H$ contains at least $\delta n^{k+1}$ copies of $F'$.
	
	\subsection{Proof of Lemma \ref{lem:base_case}}\label{subsec:base}
		In the proof of Lemma \ref{lem:base_case}, we will need the following simple claim that can be verified by exhausting all possible cases. The proof is thus omitted.
		\begin{claim}\label{claim:(6,3)}
			Consider the $3$-uniform linear $3$-cycle on vertices $v_1,\dots,v_6$, as depicted in Figure
			1,
			and let $U \subseteq \{v_1,\dots,v_6\}$. Then
			$\Delta(U) \geq
			|U \cap \{v_1,\dots,v_4\}| - \mathds{1}_{\{v_1,\dots,v_4\} \subseteq U}$.
			Moreover, if $U \setminus \{v_1,\dots,v_4\} \neq \emptyset$
			and either $v_1 \notin U$ or $U \cap \{v_2,v_3\} = \emptyset$, then
			$\Delta(U) \geq |U \cap \{v_1,\dots,v_4\}| + 1$.
%
%
		\end{claim}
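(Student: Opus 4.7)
My plan is to read the three edges of the linear $3$-cycle off Figure~1, namely $e_1 = \{v_1,v_2,v_5\}$, $e_2 = \{v_1,v_3,v_6\}$ and $e_3 = \{v_4,v_5,v_6\}$, set $A := \{v_1,v_2,v_3,v_4\}$ so that $\{v_1,\ldots,v_6\} \setminus A = \{v_5,v_6\}$, and record the two structural facts on which the claim rests: each of $e_1,e_2,e_3$ contains at least one vertex of $\{v_5,v_6\}$, and any two of the three edges together already cover both of $v_5$ and $v_6$. These observations will convert both inequalities into statements about how many edges can lie inside $U$ in terms of $|U \cap \{v_5,v_6\}|$.

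For the first inequality, I would first rewrite $\Delta(U) \geq |U\cap A| - \mathds{1}_{A \subseteq U}$ using the identity $|U|-|U\cap A|=|U\cap\{v_5,v_6\}|$ as the equivalent bound $e(U) \leq |U \cap \{v_5,v_6\}| + \mathds{1}_{A \subseteq U}$, and then do a short case split on $e(U) \in \{0,1,2,3\}$. The cases $e(U)=0$ and $e(U)=3$ are immediate (in the latter, $U$ must equal $\{v_1,\ldots,v_6\}$, so the indicator contributes $1$), the case $e(U)=1$ uses that every single edge meets $\{v_5,v_6\}$, and the case $e(U)=2$ uses that any two edges jointly contain both $v_5$ and $v_6$, forcing $|U \cap \{v_5,v_6\}|=2$.

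For the second inequality, I would similarly reformulate it as $e(U) \leq |U \cap \{v_5,v_6\}| - 1$ under the hypotheses $U \setminus A \neq \emptyset$ and either $v_1 \notin U$ or $U \cap \{v_2,v_3\} = \emptyset$. The key observation is that each of the two extra assumptions rules out both $e_1$ and $e_2$ from lying inside $U$: if $v_1 \notin U$ this is immediate since $v_1 \in e_1 \cap e_2$, and if $U \cap \{v_2,v_3\} = \emptyset$ we use $v_2 \in e_1$ and $v_3 \in e_2$. Hence $e(U) \leq 1$, and equality forces $e_3 \subseteq U$, which puts both $v_5$ and $v_6$ into $U$; combined with the trivial lower bound $|U\cap\{v_5,v_6\}| \geq 1$ coming from $U \setminus A \neq \emptyset$, this yields the desired inequality in every case. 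The whole argument reduces to a short case inspection after these reformulations, so there is no genuine obstacle and the authors' decision to omit the proof is well-founded.
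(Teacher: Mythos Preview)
Your proof is correct. The paper omits the proof entirely, stating only that the claim ``can be verified by exhausting all possible cases''; your approach is a clean, structured version of exactly this case check, so it is fully in line with what the authors intend.
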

	
	Let $F$ denote the $3$-uniform linear $3$-cycle (see Figure 1).
	Claim \ref{claim:(6,3)} implies that $F$ satisfies Condition 1 in Definition \ref{def:main} with respect to $A = \{v_1,\dots,v_4\}$. However, $F$ does {\em not} satisfy Condition 2 in that definition, as evidenced, e.g., by the set $U = \{v_1,v_2,v_5\}$. So the ``moreover"-part of Claim \ref{claim:(6,3)} can be thought of as a (non-equivalent) variant of Condition 2 in Definition \ref{def:main}.
	We also note that by going over all possible choices of $A$, one can easily verify that $F$ is {\em not} nice.
	
	\begin{proof}[Proof of Lemma \ref{lem:base_case}]
		Let $F$ be the $3$-graph depicted in Figure 2, having vertices
		$$w_1,w_2,w_3,w_4,w'_1,w'_2,w'_3,w'_4,x_5,x_6,y_5,y_6,z_5,z_6$$ and edges
		\begin{align*}
		&\{w_1,w_2,x_5\},\{x_5,w'_4,x_6\},\{x_6,w_3,w_1\},\{x_5,w_4,y_6\},\{y_6,w'_3,w_1\},
		\\
		&\{w_1,w'_2,y_5\},\{y_5,w_4,x_6\},\{w'_1,w_2,z_5\},\{z_5,w_4,z_6\},\{z_6,w_3,w'_1\}.
		\end{align*}
		Then $v(F) = 14$ and $e(F) = 10$. Solymosi and Solymosi \cite{Solymosi} (implicitly) proved that for every $3$-graph $H$ with $n \geq n_0(r,\varepsilon)$ vertices and at least $\varepsilon n^2$ edges, either $H$ satisfies the assertion of Item 1 in the lemma or $H$ contains at least $\delta(r,\varepsilon) \cdot n^4$ copies of $F$ (with $\delta(r,\varepsilon)$ being roughly $\gamma(3,\varepsilon/r)$, where $\gamma$ is from Theorem \ref{thm:removal}). So, in order to complete the proof, it is enough to show that $F$ is nice.
		
			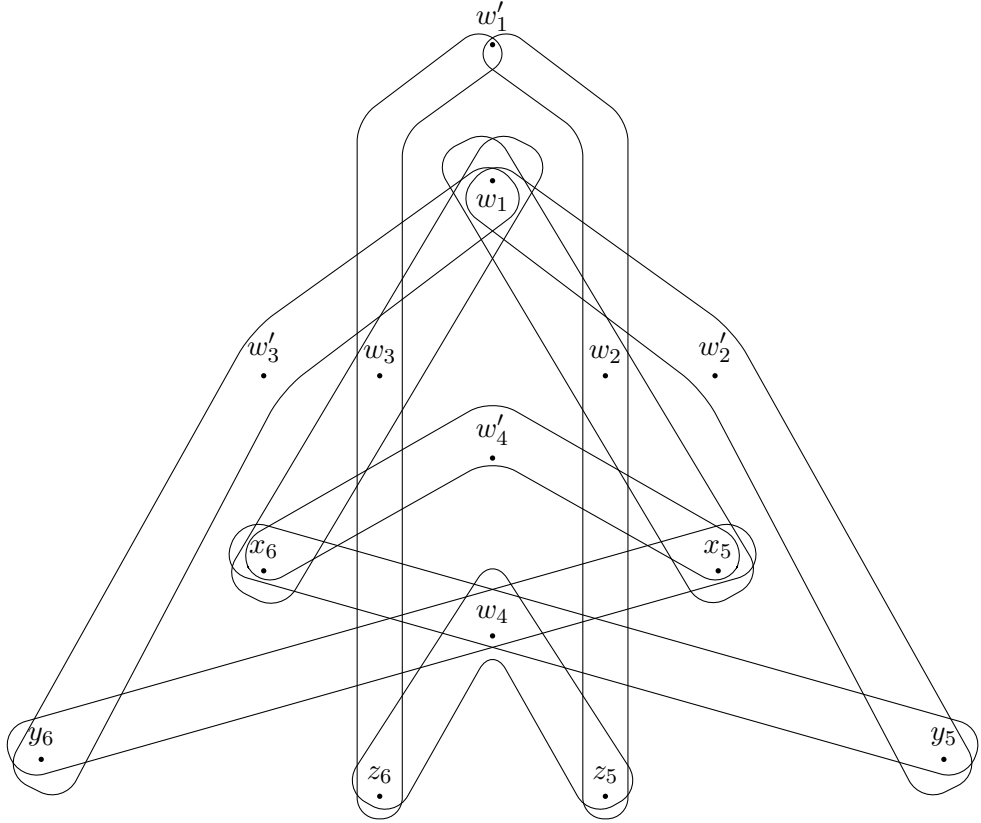
\begin{figure}\label{Fig:(14,10)}
	\centering
	\begin{tikzpicture}
	
	\draw (0,1.73*3) node[fill=black,circle,minimum size=2pt,inner sep=0pt,label=below:$w_1$] {};
	
	\draw (0.5*3,0.865*3) node[fill=black,circle,minimum size=2pt,inner sep=0pt,label=above:$w_2$] {};
	
	
	\draw (-0.5*3,0.865*3) node[fill=black,circle,minimum size=2pt,inner sep=0pt,label=above:$w_3$] {};
	
	\draw (0,1.5) node[fill=black,circle,minimum size=2pt,inner sep=0pt,label=above:$w'_4$] {};
	
	\draw (1*3,0*3) node[fill=black,circle,minimum size=2pt,inner sep=0pt,label=above:$x_5$] {};
	
	\draw (-1*3,0*3) node[fill=black,circle,minimum size=2pt,inner sep=0pt,label=above:$x_6$,left] {};		
	
	\draw (1*3,0.865*3) node[fill=black,circle,minimum size=2pt,inner sep=0pt,label=above:$w'_2$,left] {};	
	
	\draw (-1*3,0.865*3) node[fill=black,circle,minimum size=2pt,inner sep=0pt,label=above:$w'_3$,left] {};
	
	\draw (6,-0.865*2.9) node[fill=black,circle,minimum size=2pt,inner sep=0pt,label=above:$y_5$] {};	
	
	\draw (-6,-0.865*2.9) node[fill=black,circle,minimum size=2pt,inner sep=0pt,label=above:$y_6$] {};
	
	\draw (0,-0.866) node[fill=black,circle,minimum size=2pt,inner sep=0pt,label=above:$w_4$] {};	
	
	\draw (0,7) node[fill=black,circle,minimum size=2pt,inner sep=0pt,label=above:$w'_1$] {};
	
	\draw (1.5,-3) node[fill=black,circle,minimum size=2pt,inner sep=0pt,label=above:$z_5$] {};
	
	\draw (-1.5,-3) node[fill=black,circle,minimum size=2pt,inner sep=0pt,label=above:$z_6$] {};
	
	\draw [rounded corners = 10] (-0.8,5.5) -- (0,5.9) -- (3.6,-0.1) -- (2.8,-0.55) -- cycle;
	
	\draw [rounded corners = 10] (0.8,5.5) -- (0,5.9) -- (-3.6,-0.15) -- (-2.8,-0.55) -- cycle;
	
	\draw [rounded corners = 10] (3.1,-0.25) -- (3.4,0.35) -- (0,2.3) -- (-3.4,0.35) -- (-3.1,-0.25) -- (0,1.5) -- cycle;
	
	\draw [rounded corners = 10] (-0.5,4.9) -- (-0,5.5) -- (3.2,3.2) -- (6.5,-2.7) -- (5.7,-3.1) -- (2.8,2.4) -- cycle;
	
	\draw [rounded corners = 10] (0.5,4.9) -- (0,5.5) -- (-3.2,3.2) -- (-6.5,-2.7) -- (-5.7,-3.1) -- (-2.8,2.4) -- cycle;
	
	\draw [rounded corners = 10] (3.6,0) -- (-6.3,-2.8) -- (-6.55,-2.1) -- (3.35,0.7) -- cycle;
	
	\draw [rounded corners = 10] (-3.6,0) -- (6.3,-2.8) -- (6.55,-2.1) -- (-3.35,0.7) -- cycle;
	
	\draw [rounded corners = 8] (-0.25,6.85) -- (0.15,7.25) -- (1.8,6) -- (1.8,-3.3) -- (1.2,-3.3) -- (1.2,5.8) -- cycle;
	
	\draw [rounded corners = 8] (0.25,6.85) -- (-0.15,7.25) -- (-1.8,6) -- (-1.8,-3.3) -- (-1.2,-3.3) -- (-1.2,5.8) -- cycle;
	
	\draw [rounded corners = 10] (1.3,-3.3) -- (2,-2.9) -- (0,0.2) -- (-2,-2.9) -- (-1.3,-3.3) -- (0,-1) -- cycle;
	\end{tikzpicture}
	\caption{The (14,10)-configuration used in Lemma \ref{lem:base_case}}
\end{figure}
		
		We prove that $F$ satisfies the requirements of Definition \ref{def:main} with $A := \{w_4,w'_1,w'_2,w'_3,w'_4\}$.
		To this end, define $V_1 = \{w'_1,w_2,z_5,w_4,z_6,w_3\}$,
		$V_2 = \{w_1,w'_2,y_5,w_4,x_6,w_3\}$, $V_3 = \{w_1,w_2,x_5,w_4,y_6,w'_3\}$ and $V_4 = \{w_1,w_2,x_5,w'_4,x_6,w_3\}$. Observe that $F[V_i]$ is a linear $3$-cycle for every $1 \leq i \leq 4$. Furthermore, considering the vertex-labeling of the linear $3$-cycle in Figure 1, we see that for each $1 \leq i,j \leq 4$, the role of $v_j$ in $F[V_i]$ is played by $w_j$ if $j \neq i$ and by $w'_j$ if $j = i$.
		Now fix any $U \subseteq V(F)$ and let us show that $U$ satisfies Items 1-2 in Definition \ref{def:main}.
		For each $1 \leq i \leq 4$, define $U_i = U \cap V_i$ and $A_i := (\{w_1,\dots,w_4\} \setminus \{w_i\}) \cup \{w'_i\}$.
		Note that by Claim \ref{claim:(6,3)} we have
		$\Delta(U_i) \geq |U_i \cap A_i| - \mathds{1}_{A_i \subseteq U_i}$.
		
		Let us now express $\Delta(U)$ in terms of $\Delta(U_1),\dots,\Delta(U_4)$.
		It is easy to check that
		\begin{equation}\label{eq:base_case_|U|}
		|U| = \sum_{i = 1}^{4}{|U_i|} -
		2 \cdot |U \cap \{w_1,\dots,w_4\}| - |U \cap \{x_5,x_6\}|
		\end{equation}
		and
		\begin{equation}\label{eq:base_case_e(U)}
		e(U) = \sum_{i=1}^{4}{e(U_i)} - \mathds{1}_{\{w_1,w_2,x_5\} \subseteq U} - \mathds{1}_{\{w_1,w_3,x_6\} \subseteq U}.
		\end{equation}
		Setting $r := \sum_{i = 1}^{4}{\left( \Delta(U_i) - |U_i \cap A_i| \right)}$
		and
		$$
		t :=
		|U \cap \{w_1,w_2,w_3\}| - |U \cap \{x_5,x_6\}| + \mathds{1}_{\{w_1,w_2,x_5\} \subseteq U} + \mathds{1}_{\{w_1,w_3,x_6\} \subseteq U},
		$$
		we combine \eqref{eq:base_case_|U|} and \eqref{eq:base_case_e(U)} to obtain
		\begin{align}\label{eq:Delta(U)_base_case}
		\Delta(U) \nonumber&=
		\sum_{i=1}^{4}{\Delta(U_i)} -
		2 \cdot |U \cap \{w_1,\dots,w_4\}| - |U \cap \{x_5,x_6\}| + \mathds{1}_{\{w_1,w_2,x_5\} \subseteq U} + \mathds{1}_{\{w_1,w_2,x_6\} \subseteq U}
		\\ \nonumber&=
		\sum_{i=1}^{4}{|U_i \cap A_i|} + r - 2 \cdot |U \cap \{w_1,\dots,w_4\}| - |U \cap \{w_1,w_2,w_3\}| + t
		\\ \nonumber&=
		|U \cap A| + r + t.
		\end{align}
		
		To complete the proof, it is enough to show that $r + t \geq -\mathds{1}_{A \subseteq U}$ and that $r + t \geq 1$ if $|U \cap A| \leq 3$ and $U \setminus A \neq \emptyset$. In what follows we will frequently use the fact that
		$\Delta(U_i) \geq |U_i \cap A_i| - \mathds{1}_{A_i \subseteq U_i}$ for each $1 \leq i \leq 4$, as mentioned \nolinebreak above.
		We consider two cases, depending on whether $w_1 \in U$ or not.
		Suppose first that $w_1 \notin U$. In this case we have $t = |U \cap \{w_2,w_3\}| - |U \cap \{x_5,x_6\}|$. Furthermore, $A_i \not\subseteq U_i$ for each $2 \leq i \leq 4$, which implies that $\Delta(U_i) \geq |U_i \cap A_i|$ for these values of $i$.
		Note that if $x_5 \in U$, then $U_i \setminus A_i \neq \emptyset$ for $i = 3,4$, so, by the ``moreover"-part of Claim \ref{claim:(6,3)} (and as $w_1 \notin U$), we have $\Delta(U_i) \geq |U_i \cap A_i| + 1$ for these values of $i$. Similarly, if $x_6 \in U$, then $\Delta(U_i) \geq |U_i \cap A_i| + 1$ for $i = 2,4$.
		Altogether, we conclude that
		$
		r \geq |U \cap \{x_5,x_6\}| + 1 - \mathds{1}_{U \cap \{x_5,x_6\} = \emptyset} - \mathds{1}_{A_1 \subseteq U_1}
		$
		and hence
		\begin{equation}\label{eq:base_case_eq_1}
		r + t \geq |U \cap \{w_2,w_3\}| + 1 - \mathds{1}_{U \cap \{x_5,x_6\} = \emptyset} - \mathds{1}_{A_1 \subseteq U_1}.
		\end{equation}
		If $A_1 \subseteq U_1$, then $\{w_2,w_3\} \subseteq U$ and hence $r + t \geq 1$. So we assume from now on that $A_1 \not\subseteq U_1$. It then easily follows from \eqref{eq:base_case_eq_1} that $r + t \geq 1$ unless
		$U \cap \{w_2,w_3,x_5,x_6\} = \emptyset$. Suppose then that $U \cap \{w_2,w_3,x_5,x_6\} = \emptyset$ and note that in this case $r \geq 0$ and $t = 0$, so in particular $r + t \geq 0 \geq -\mathds{1}_{A \subseteq U}$.
		Furthermore, if $U \setminus A \neq \emptyset$, then
		$U \setminus (A_1 \cup \dots \cup A_4) \neq \emptyset$ (because $U \cap \{w_1,w_2,w_3\} = \emptyset$), so there must be some $1 \leq i \leq 4$ such that $U_i \setminus A_i \neq \emptyset$. Now Claim \ref{claim:(6,3)} implies that $\Delta(U_i) \geq |U_i \cap A_i| + 1$ and hence $r \geq 1$.
		We conclude that if $U \setminus A \neq \emptyset$, then $r + t \geq 1$, as required.
			
		Having handled the case where $w_1 \notin U$, we assume from now on that $w_1 \in U$. Here we consider several subcases, depending on the intersection of $U$ with $\{w_2,w_3\}$. Suppose first that $U \cap \{w_2,w_3\} = \nolinebreak \emptyset$. Then
		$A_i \not\subseteq U_i$ for each $1 \leq i \leq 4$, implying that $r \geq 0$. Furthermore, $t = 1 - |U \cap \{x_5,x_6\}|$.
		So if $U \cap \{x_5,x_6\} = \emptyset$, then $r + t \geq 1$ and we are done.
		On the other hand, if $U \cap \{x_5,x_6\} \neq \emptyset$, then $U_4 \setminus A_4 \neq \emptyset$, which implies, by Claim \ref{claim:(6,3)}, that $\Delta(U_4) \geq |U_4 \cap A_4| + 1$. This shows that $r + t \geq 0 \geq -\mathds{1}_{A \subseteq U}$ and in fact $r + t \geq 1$ if $|U \cap \{x_5,x_6\}| \leq 1$. So from now on we assume that $\{x_5,x_6\} \subseteq U$ and show that $r + t \geq 1$ unless $|U \cap A| \geq 4$. As $\{x_5,x_6\} \subseteq U$, we have $U_i \setminus A_i \neq \emptyset$ for $i = 2,3$. It now follows from Claim \ref{claim:(6,3)} that, for each $i = 2,3$, if $w'_i \notin U$, then $\Delta(U_i) \geq |U_i \cap A_i| + 1$, which, combined with $\Delta(U_4) \geq |U_4 \cap A_4| + 1$, implies that $r \geq 2$ and hence $r + t \geq 1$. So we are done unless $w'_2,w'_3 \in U$. Suppose then that $w'_2,w'_3 \in U$.
		If $w_4 \notin U$, then either $U_2 = \{w_1,w'_2,x_6\}$ or $U_2 = \{w_1,w'_2,y_5,x_6\}$ and in both cases $\Delta(U_2) = 3 = |U_2 \cap A_2| + 1$. But this implies that $r \geq 2$, again giving $r + t \geq 1$. Therefore, we may assume that $w_4 \in U$. Similarly, if $w'_4 \notin U$, then $U_4 = \{w_1,x_5,x_6\}$, from which it follows that $\Delta(U_4) = 3 = |U_4 \cap A_4| + 2$ and hence $r \geq 2$. So we may also assume that $w'_4 \in U$. Altogether, we see that $r + t \geq 1$ unless $\{w'_2,w'_3,w_4,w'_4\} \subseteq U$, which only holds if $|U \cap A| \geq 4$.
		
		Suppose now that $|U \cap \{w_2,w_3\}| = 1$. By symmetry, we may assume without loss of generality that $w_2 \in U$ and $w_3 \notin U$. Then
		$t = 2 - \mathds{1}_{x_6 \in U}$ and $A_i \not\subseteq U_i$ for every $i \in \{1,2,4\}$. It follows that
		$r + t \geq 2 - \mathds{1}_{x_6 \in U} - \mathds{1}_{A_3 \subseteq U_3}$ and hence $r + t \geq 1$ unless $x_6 \in U$ and $A_3 \subseteq U_3$. Suppose then that $x_6 \in U$ and $\{w'_3,w_4\} \subseteq A_3 \subseteq U_3 \subseteq U$. As $x_6 \in U$, we have $U_2 \setminus A_2 \neq \emptyset$. Therefore, if $w'_2 \notin U$, then by Claim \ref{claim:(6,3)} we have $\Delta(U_2) \geq |U_2 \cap A_2| + 1$, which implies that $r \geq 0$ and hence $r + t \geq 1$. So we may assume that $w'_2 \in U$.
		Moreover, if $w'_4 \notin U$, then either $U_4 = \{w_1,w_2,x_6\}$ or $U_4 = \{w_1,w_2,x_5,x_6\}$. Since in both cases $\Delta(U_4) = |U_4 \cap A_4| + 1$, we infer that if $w'_4 \notin U$, then $r \geq 0$ and hence $r + t \geq 1$. Overall, we see that $r + t \geq 1$ unless $\{w'_2,w'_3,w_4,w'_4\} \subseteq U$, as required.
		
		It remains to handle the case where $\{w_2,w_3\} \subseteq U$.
		In this case, we have $t = 3$, so
		$r + t \geq 0$ unless $r = -4$. But if $r = -4$, then $A_i \subseteq U_i$ for each $1 \leq i \leq 4$, which implies that $A \subseteq U$. So we see that
		$r + t \geq -\mathds{1}_{A \subseteq U}$,
		as required.
		Furthermore, if $|U \cap A| \leq 3$, then $\#\{1 \leq i \leq 4 : A_i \subseteq U_i\} \leq 2$ (indeed, if $A_i \subseteq U_i$ for at least $3$ indices $1 \leq i \leq 4$, then $|U \cap \{w'_1,\dots,w'_4\}| \geq 3$ and $w_4 \in U$, implying that $|U \cap A| \geq 4$), so in fact we have $r \geq -2$ and hence $\Delta(U) \geq |U \cap A| + 1$. This completes the proof.
	\end{proof}

\section{Proof of Lemma \ref{lem:Sarkozy_Selkow}}\label{sec:sarkozyselkow}
In this section, we prove Lemma \ref{lem:Sarkozy_Selkow} through a sequence of claims.
We start by defining the $3$-graphs $(G_{\ell})_{\ell \geq 0}$ appearing in the statement of the lemma.
Very roughly speaking, $G_{\ell}$ can be thought of as the $3$-graph obtained by starting with a complete $k$-ary tree of height $\ell$ and replacing each of its vertices by a copy of $G$.

In each of the graphs $G_{\ell}$ ($\ell \geq 0$) we will have a special set $A_{\ell}$ of $k+\ell+1$ vertices, whose role will be similar to the role of the set $A$ in Definition \ref{def:main}. The elements of $A_{\ell} \subseteq V(G_{\ell})$ will be denoted by $x_1,\ldots,x_k,y_0,\ldots,y_{\ell}$. 
If $G^*$ is a copy of some $G_{\ell}$, then
we will use $x_i(G^*)$ and $y_i(G^*)$ to denote the vertices of $G^*$ playing the roles of $x_i$ and $y_i$, respectively, in $G^*$. We will
also write $A_{\ell}(G^*)=\{x_1(G^*),\dots,x_k(G^*),y_0(G^*),\dots,y_{\ell}(G^*)\}$. 
When $G^*$ is clear from the context, we will simply write $A_{\ell},x_1,\dots,x_k,y_0,\dots,y_{\ell}$.

Recall that $G$ is assumed to be nice; so let $A \subseteq V(G)$ be as in Definition \ref{def:main} and note that
$|A| = k+1$ and that $A$ is an independent set. Label the vertices of $A$ (arbitrarily) as $x_1,\dots,x_k,y_0$
and set $G_0$ to be $G$, $y_0(G_0)$ to be $y_0$ and $x_i(G_0)$ to be $x_i$ for every $1 \leq i \leq k$.
In particular, $A_0(G_0)=A$. Proceeding by induction, we fix $\ell \geq 1$ and assume that $G_{\ell-1}$ and the vertices $x_1(G_{\ell-1}),\dots,x_k(G_{\ell-1})$ and $y_0(G_{\ell-1}),\dots,y_{\ell-1}(G_{\ell-1})$ 
(and thus also the set $A_{\ell-1}(G_{\ell-1})$) have already been defined. Now $G_{\ell}$ is defined as follows. Start with the $k+\ell+1$ vertices
$x_1,\ldots,x_k,y_0,\ldots,y_{\ell}$. Define $x_i(G_{\ell})$ to be $x_i$ for every $1 \leq i \leq k$ and $y_i(G_{\ell})$ to be $y_i$
for every $0 \leq i \leq \ell$. In addition to these $k+\ell+1$ vertices, we also have $k$ additional vertices $x'_1,\dots,x'_k$. For each $1 \leq i \leq k$, add a copy of $G_{\ell-1}$, denoted $G_{\ell-1}^i$, in which $x_j$ plays the role of $x_j(G_{\ell-1})$ for each $j \in [k] \setminus \{i\}$, $x'_i$ plays the role of $x_i(G_{\ell-1})$, $y_j$ plays the role of $y_j(G_{\ell-1})$ for each $0 \leq j \leq \ell-1$ and all other
$v(G_{\ell-1}) - k - \ell$ vertices are ``new".
As a last step, add a copy $G^{\ell}$ of $G$ in which $x_i$ plays the role of $x_i(G)$ for each $1 \leq i \leq k$, $y_{\ell}$ plays the role of $y_0(G)$ and all other $v(G) - k - 1$ vertices are ``new". The resulting $3$-graph is \nolinebreak $G_{\ell}$.
	
\begin{claim}\label{claim:Sarkozy_Selkow_basic_facts}
For every $\ell \geq 0$, the set $A_{\ell}(G_{\ell}) \subseteq V(G_{\ell})$ is independent and the graph $G_{\ell}$ satisfies the assertion of Item 1 of Lemma \ref{lem:Sarkozy_Selkow}.
\end{claim}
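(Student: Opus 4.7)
The plan is to prove the claim by induction on $\ell$, simultaneously handling all three assertions: independence of $A_{\ell}(G_{\ell})$, the edge count $e(G_{\ell}) = \frac{k^{\ell+1}-1}{k-1}e(G)$, and $\Delta(G_{\ell}) = k + \ell$. The base case $\ell = 0$ is immediate since $G_0 = G$: $A_0(G_0) = A$ is independent by the niceness of $G$, $\Delta(G) = k$, and $\frac{k^1 - 1}{k-1}e(G) = e(G)$.

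For the inductive step, my first task is to show that the edge-sets of $G^{\ell}, G_{\ell-1}^1, \ldots, G_{\ell-1}^k$ are pairwise disjoint inside $G_{\ell}$. For any two distinct copies $G_{\ell-1}^i$ and $G_{\ell-1}^{i'}$, their vertex sets meet only in $\{x_j : j \in [k] \setminus \{i,i'\}\} \cup \{y_0, \ldots, y_{\ell-1}\}$, which in either copy lies inside the image of $A_{\ell-1}(G_{\ell-1})$. By the inductive hypothesis this image is independent in $G_{\ell-1}$, so neither copy contains an edge inside the intersection. Similarly, $V(G^{\ell}) \cap V(G_{\ell-1}^i) \subseteq \{x_1,\dots,x_k\} \subseteq A$, and $A$ is independent in $G$. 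Hence $e(G_{\ell}) = k \cdot e(G_{\ell-1}) + e(G)$, and substituting the inductive closed form immediately yields the claimed formula.

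Next, I would tally the vertices of $G_{\ell}$ as the disjoint sum of the $k + \ell + 1$ named vertices $x_1,\dots,x_k,y_0,\dots,y_{\ell}$, the $k$ extra vertices $x'_1,\dots,x'_k$, the $k(v(G_{\ell-1}) - k - \ell)$ vertices new across the $G_{\ell-1}^i$'s, and the $v(G) - k - 1$ vertices new to $G^{\ell}$. Substituting $v(G_{\ell-1}) = e(G_{\ell-1}) + k + \ell - 1$ from the inductive hypothesis and subtracting $e(G_{\ell}) = k\cdot e(G_{\ell-1}) + e(G)$, the $e(G_{\ell-1})$ terms cancel and a short computation collapses to $\Delta(G_{\ell}) = k + \ell$. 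Finally, independence of $A_{\ell}(G_{\ell})$ is obtained by noting that every edge of $G_{\ell}$ lies in $G^{\ell}$ or in some $G_{\ell-1}^i$: in the former, $A_{\ell} \cap V(G^{\ell}) = \{x_1,\dots,x_k,y_{\ell}\}$ corresponds to $A$ in $G$; in the latter, $A_{\ell} \cap V(G_{\ell-1}^i) = \{x_j : j \neq i\} \cup \{y_0,\dots,y_{\ell-1}\}$ (crucially, $x'_i \notin A_{\ell}$) corresponds to a subset of $A_{\ell-1}$ in $G_{\ell-1}$, and both are independent by the relevant hypothesis.

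The only real thing to be careful about is the bookkeeping in the vertex count and keeping straight which vertex of $G_{\ell-1}^i$ plays which role in $A_{\ell-1}(G_{\ell-1})$. There is no deeper obstacle: the independence of $A_{\ell-1}$ in $G_{\ell-1}$ supplied by the induction is the single fact that both justifies treating the sub-copies as edge-disjoint and lifts to independence of $A_{\ell}$ in $G_{\ell}$.
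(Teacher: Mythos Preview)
Your proposal is correct and follows essentially the same approach as the paper. The only organizational difference is that the paper first runs a separate induction to establish independence of $A_{\ell}(G_{\ell})$ and then deduces edge-disjointness of the sub-copies from that, whereas you bundle everything into a single induction and obtain edge-disjointness directly from the inductive hypothesis at level $\ell-1$; both work for the same reason, namely that the pairwise intersections of the sub-copies lie inside the relevant copies of $A_{\ell-1}$ (respectively $A$).
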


\begin{proof}
We first prove by induction on $\ell$ that $A_{\ell}(G_{\ell})$ is an independent set. 
For $\ell = 0$, this is guaranteed by our choice of $A_0(G_0) = A$. So fixing $\ell \geq 1$ and assuming the claim holds for $\ell-1$, we now prove it for $\ell$. By the definition of $G_{\ell}$, each edge of $G_{\ell}$ belongs to one of the $3$-graphs $G_{\ell-1}^1,\dots,G_{\ell-1}^k,G^{\ell}$. Moreover, we have 
$V(G_{\ell-1}^i) \cap A_{\ell}(G_{\ell}) \subseteq A_{\ell-1}(G_{\ell-1}^i)$ for every $1 \leq i \leq k$ and $V(G^{\ell}) \cap A_{\ell}(G_{\ell}) = A_0(G^{\ell})$. So the fact that $A_{\ell}(G_{\ell})$ is independent follows from the induction hypothesis for $\ell - 1$ and from the case $\ell = 0$.

Since $A_{\ell}(G_{\ell})$ is independent, the subgraphs 
$G_{\ell-1}^1,\dots,G_{\ell-1}^k,G^{\ell}$, which comprise $G_{\ell}$, are pairwise edge-disjoint.
This implies that $e(G_{\ell}) = k \cdot e(G_{\ell-1}) + e(G)$. 
We now prove the two assertions of Item 1 of the lemma by induction on $\ell$. The case $\ell = 0$ is immediate. 
As for the induction step, observe that for each $\ell \geq 1$, we have
$$
	e(G_{\ell}) = k \cdot e(G_{\ell-1}) + e(G) =
	\left( k \cdot \frac{k^{\ell} - 1}{k - 1} + 1 \right) \cdot e(G) = \frac{k^{\ell+1} - 1}{k - 1} \cdot e(G),
$$
where the second equality follows from the induction hypothesis for $\ell - 1$. 
Moreover, we have
	\begin{align*}
	v(G_{\ell}) &=
	2k + \ell + 1 + k \cdot (v(G_{\ell-1}) - k - \ell) + v(G) - k - 1
	\\ &=
	k + \ell + k \cdot (v(G_{\ell-1}) - k - \ell + 1) + v(G) - k
	\\ &=
	k + \ell + k \cdot e(G_{\ell-1}) + e(G)=
	k + \ell + e(G_{\ell}).
	\end{align*}
Here we used the fact that $\Delta(G) = k$ and the induction hypothesis $\Delta(G_{\ell-1}) = k + \ell - 1$.
So we see that $\Delta(G_{\ell}) = k+\ell$. We have thus proved that Item 1 in Lemma \ref{lem:Sarkozy_Selkow} holds. 
\end{proof}
We now move on to Item 2 of Lemma \ref{lem:Sarkozy_Selkow}. We will prove
this item in the following slightly stronger form, which allows for an inductive proof.

\begin{claim}\label{claim:Sarkozy_Selkow_degeneracy}
		Let $\ell \geq 0$ and $0 \leq t \leq e(G_{\ell})/e(G)$.
		Then there is a subgraph $G'$ of $G_{\ell}$ such that $e(G') = t \cdot e(G)$, 
		$v(G') - e(G') \leq k - 1 + |V(G') \cap \{y_0(G_{\ell}),\dots,y_{\ell}(G_{\ell})\}|$ and 
		$x_1(G_{\ell}),\dots,x_{k-1}(G_{\ell}) \in V(G')$. 
\end{claim}
\noindent
Claim \ref{claim:Sarkozy_Selkow_degeneracy} implies Item 2 of Lemma \ref{lem:Sarkozy_Selkow} because (trivially) $|\{y_0(G_{\ell}),\dots,y_{\ell}(G_{\ell})\}| = \ell+1$. 

\begin{proof}[Proof of Claim \ref{claim:Sarkozy_Selkow_degeneracy}]
		The proof is by induction on $\ell$. 
		The case $t = 0$ is trivial (for every $\ell$), because one can take $G'$ to be the empty graph on $\{x_1(G_{\ell}),\dots,x_{k-1}(G_{\ell})\}$.
		For the base case $\ell = 0$ and for $t = 1$, it is easy to see that the choice $G' := G_0 = G$ satisfies all requirements in the claim. So suppose that $\ell \geq 1$ and let $1 \leq t \leq e(G_{\ell})/e(G)$. 
		Recall that $G_{\ell-1}^1,\dots,G_{\ell-1}^k$ are the copies of $G_{\ell-1}$ which feature in the definition of $G_{\ell}$.
		Let us state the induction hypothesis for $\ell-1$, applied to the copy $G_{\ell-1}^k$ of $G_{\ell-1}$, and plug into it the fact that $y_i(G^k_{\ell-1}) = y_i(G_{\ell}) \text{ for every } 0 \leq i \leq \ell-1 \text{ and } x_i(G^k_{\ell-1}) = x_i(G_{\ell}) \text{ for every } 1 \leq i \leq k-1$, which follows from the definition of $G_{\ell}$.  
		By doing this we get the following statement:
		\paragraph{Induction hypothesis for $G_{\ell-1}^k$:} For every $0 \leq t' \leq e(G_{\ell-1})/e(G)$, there is a subgraph $G''$ of $G_{\ell-1}^k$ which satisfies $e(G'') = t' \cdot e(G)$, $v(G'') - e(G'') \leq k - 1 + |V(G'') \cap \{y_0(G_{\ell}),\dots,y_{\ell-1}(G_{\ell})\}|$ and $x_1(G_{\ell}),\dots,x_{k-1}(G_{\ell}) \in V(G'')$.  
		
		\vspace{3mm}
		For the rest of the proof it will be safe to write $x_i$ for $x_i(G_{\ell})$, $y_i$ for $y_i(G_{\ell})$ and $A_{\ell}$ for $A_{\ell}(G_{\ell})$, as we always consider $G_{\ell}$ and no other hypergraph (so there should be no confusion). This will simplify the notation. 
		
		Suppose first that $t \leq e(G_{\ell-1})/e(G)$. In this case, take $G'$ to be the subgraph $G''$ from the above induction hypothesis with $t' := t$. Then $G'$ satisfies all the requirements in Claim \ref{claim:Sarkozy_Selkow_degeneracy}. 
		
		
Suppose from now on that $t > e(G_{\ell-1})/e(G)$. 
If $t = e(G_{\ell})/e(G)$, then $G' = G_{\ell}$ satisfies all the requirements in Claim \ref{claim:Sarkozy_Selkow_degeneracy}. Suppose then that $t \leq e(G_{\ell})/e(G) - 1 = (k^{\ell+1} - 1)/(k-1) - 1 = k \cdot (k^{\ell} - 1)/(k-1)$, where the first equality uses Item 1 of Lemma \ref{lem:Sarkozy_Selkow}. We also assumed that $t \geq e(G_{\ell-1})/e(G) + 1 = (k^{\ell}-1)/(k-1) + 1$. 
Let $d$ be the unique integer satisfying 	
\begin{equation}\label{eq:choice of d}
d \cdot (k^{\ell} - 1)/(k-1) + 1 \leq t \leq
(d+1) \cdot (k^{\ell} - 1)/(k-1)
\end{equation}
and note that $1 \leq d \leq k-1$ by the above upper and lower bounds on $t$. 
Set
\begin{equation}\label{eq:Sarkozy_Selkow_degeneracy_t'}
t' = t - d \cdot (k^{\ell} - 1)/(k-1) - 1,
\end{equation} 
and note that
$0 \leq t' < (k^{\ell} - 1)/(k-1) = e(G_{\ell-1})/e(G)$ by \eqref{eq:choice of d}. 
Let $G''$ be the subgraph of $G_{\ell-1}^k$ obtained from the above induction hypothesis, so that $e(G'') = t' \cdot e(G)$ and
\begin{equation}\label{eq:degeneracy_G''}
v(G'') - e(G'') \leq k-1 + |V(G'') \cap \{y_0,\dots,y_{\ell-1}\}| \leq |V(G'') \cap A_{\ell}|,
\end{equation}
where the second inequality holds because $y_0,\dots,y_{\ell-1} \in A_{\ell}$ and $x_1,\dots,x_{k-1} \in V(G'') \cap A_{\ell}$ (using the above induction hypothesis). 
Now, take $G'$ to be the subgraph of $G_{\ell}$ consisting of $G_{\ell-1}^1,\dots,G_{\ell-1}^d$, $G^{\ell}$ and $G''$. 
Recall that $G''$ is a subgraph of $G_{\ell-1}^k$ and that $d \leq k-1$. 
This implies that $G_{\ell-1}^1,\dots,G_{\ell-1}^d,G^{\ell},G''$ are pairwise edge-disjoint. Hence, 
\begin{equation}\label{eq:Sarkozy_Selkow_degeneracy_number_of_edges}
e(G') = d \cdot e(G_{\ell-1}) + e(G) + e(G'') =
\left( d \cdot \frac{k^{\ell} - 1}{k - 1} + 1 + t'\right) \cdot e(G) = t \cdot e(G),
\end{equation}
where the second equality uses Item 1 of Lemma \ref{lem:Sarkozy_Selkow} and that $e(G'') = t' \cdot e(G)$, while the last equality uses our choice of $t'$ in \eqref{eq:Sarkozy_Selkow_degeneracy_t'}. 
Next, note that $\{x_1,\dots,x_k,y_0,\dots,y_{\ell}\} = A_{\ell} \subseteq V(G')$ because
$A_{\ell} \setminus \{x_1,y_{\ell}\} \subseteq V(G_{\ell-1}^1) \subseteq V(G')$ (recall that $d \geq 1$)
and $x_1,y_{\ell} \in V(G^{\ell}) \subseteq V(G')$. 
In particular, we have $|V(G') \cap \{y_0,\dots,y_{\ell}\}| = \ell+1$ and $x_1,\dots,x_{k-1} \in V(G')$. 
So to prove that $G'$ satisfies the requirements in Claim \ref{claim:Sarkozy_Selkow_degeneracy}, it remains to show that $v(G') - e(G') \leq k+\ell$. To this end, observe that
\begin{align*}
v(G') &= |A_{\ell}| +
d \cdot (v(G_{\ell-1}) - k - \ell + 1) +
(v(G) - k - 1) + |V(G'') \setminus A_{\ell}| 
\\ &\leq
|A_{\ell}| - 1 +
d \cdot (v(G_{\ell-1}) - k - \ell + 1) +
(v(G) - k) + e(G'') 
\\ &=
k + \ell + d \cdot e(G_{\ell-1}) + e(G) + e(G'') = e(G') + k + \ell,
\end{align*}
where in the first equality we used the definition of $G'$; in the inequality we used the fact that
$|V(G'') \cap A_{\ell}| \geq v(G'') - e(G'')$ by \eqref{eq:degeneracy_G''}; in the second equality we used Item 1 of Lemma \ref{lem:Sarkozy_Selkow} and $|A_{\ell}| = k + \ell + 1$; and in the last equality we used \eqref{eq:Sarkozy_Selkow_degeneracy_number_of_edges}. We have thus shown that $v(G') - e(G') \leq k + \ell$, as required. 
\end{proof}
	
	The rest of this section is devoted to establishing Item 3 of Lemma \ref{lem:Sarkozy_Selkow}.
	To this end, we first prove the following claim, which shows that the niceness of $G$ (with respect to the set $A$) is carried over in a certain sense to $G_{\ell}$ for every $\ell \geq 0$. From now on, we will write $A_{\ell} = \{x_1,\dots,x_k,y_0,\dots,y_{\ell}\}$ (omitting $G_{\ell}$ from the notation). We also set $X :=  \{x_1,\dots,x_k\}$.
	\begin{claim}\label{claim:Sarkozy_Selkow_nice}
		Let $\ell \geq 0$ and let $U \subseteq V(G_{\ell})$ be such that $\{y_0,\dots,y_{\ell-1}\} \subseteq U$. Then
		\begin{enumerate}
			\item $\Delta(U) \geq |U \cap A_{\ell}| - \mathds{1}_{A_{\ell} \subseteq U}$. In particular, if $|U \cap A_{\ell}| \geq k + \ell$, then $\Delta(U) \geq k + \ell$.
			\item If $|U \cap X| \leq k - 2$ and $U \setminus A_{\ell} \neq \emptyset$, then $\Delta(U) \geq |U \cap A_{\ell}| + 1$.
			\item If $|U \cap X| \geq k-1$ and
			$U \cap V(G^{\ell})$ is not contained in $X$, then $\Delta(U) \geq k + \ell$.
		\end{enumerate}
	\end{claim}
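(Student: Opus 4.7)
The plan is to prove the claim by induction on $\ell$, adapting the strategy used to establish niceness of $F'$ in the proof of Lemma \ref{lem:super_exponential}. For the base case $\ell = 0$ we have $G_0 = G$ and $A_0 = X \cup \{y_0\}$ is precisely the witness set $A$ from Definition \ref{def:main}, so Parts 1 and 2 reduce directly to the two conditions of $G$'s niceness, the ``in particular'' clause of Part 1 follows by case-splitting on $|U \cap A_0| \in \{k, k+1\}$, and Part 3 follows because the hypothesis $U \cap V(G_0) \not\subseteq X$ means either $y_0 \in U$ (forcing $|U \cap A_0| \geq k$ and reducing to Part 1's ``in particular'') or $U \setminus A_0 \neq \emptyset$ (where conditions 1 or 2 of $G$'s niceness supply the required bound depending on whether $|U \cap X|$ is $k$ or $k-1$).

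For the inductive step, decompose $G_\ell$ into its constituent subgraphs $G_{\ell-1}^1, \ldots, G_{\ell-1}^k$ and $G^\ell$, and set $U_i = U \cap V(G_{\ell-1}^i)$ and $U^\ell = U \cap V(G^\ell)$. By counting vertex multiplicities (each $x_j$ lies in $k$ of the $k+1$ subgraphs; each $y_m$ with $0 \leq m \leq \ell - 1$ lies in all $k$ subgraphs $G_{\ell-1}^1,\dots,G_{\ell-1}^k$ but not in $G^\ell$; every other vertex lies in exactly one subgraph) and using that these subgraphs are pairwise edge-disjoint (by the independence of $A_\ell$ established in Claim \ref{claim:Sarkozy_Selkow_basic_facts}), together with the hypothesis $\{y_0,\dots,y_{\ell-1}\} \subseteq U$, one obtains
\[
\Delta(U) = \sum_{i=1}^k \Delta(U_i) + \Delta(U^\ell) - (k-1)(|U \cap X| + \ell).
\]
Now apply Part 1 of the induction hypothesis to each $U_i$ in $G_{\ell-1}^i$ (its hypothesis $\{y_0,\dots,y_{\ell-2}\} \subseteq U_i$ is inherited from ours) and condition 1 of $G$'s niceness to $U^\ell$ in $G^\ell$. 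Using $y_{\ell-1} \in U$ to simplify the IH indicators, and writing $s_i := \mathds{1}_{(X \setminus \{x_i\}) \cup \{x'_i\} \subseteq U}$ and $s_0 := \mathds{1}_{X \cup \{y_\ell\} \subseteq U}$, the identity above reduces after routine arithmetic to the master inequality
\[
\Delta(U) \geq |U \cap A_\ell| + |U \cap \{x'_1,\dots,x'_k\}| - s_0 - \sum_{i=1}^k s_i.
\]

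Part 1 is then immediate because $s_i \leq \mathds{1}_{x'_i \in U}$ forces $\sum s_i \leq |U \cap \{x'_1,\dots,x'_k\}|$, and the ``in particular'' clause follows by observing that if $|U \cap A_\ell| = k+\ell$ then $X \cup \{y_\ell\} \not\subseteq U$ (because $\{y_0,\dots,y_{\ell-1}\} \subseteq U$ already contributes $\ell$ to the intersection), forcing $s_0 = 0$. For Part 2, the hypothesis $|U \cap X| \leq k-2$ immediately gives $s_0 = 0$ and all $s_i = 0$, so one must extract an additional $+1$: either some $x'_i$ lies in $U$ and supplies it directly, or the witness vertex in $U \setminus A_\ell$ lies in $V(G^\ell)$ (and condition 2 of $G$'s niceness applied to $U^\ell$ yields the $+1$) or in $V(G_{\ell-1}^i)$ for some $i$ (and Part 2 of the IH applies to $U_i$, whose hypothesis $|U_i \cap ((X \setminus \{x_i\}) \cup \{x'_i\})| \leq k-2$ holds since $x'_i \notin U$). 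I expect Part 3 to be the main obstacle, as it demands the strict conclusion $\Delta(U) \geq k+\ell$ with no slack: if $y_\ell \in U$ then $|U \cap A_\ell| \geq k+\ell$ and Part 1's ``in particular'' suffices, but otherwise one must split on $|U \cap X| \in \{k-1, k\}$, and in the case $|U \cap X| = k-1$ one strengthens $\Delta(U^\ell)$ by an extra $+1$ via condition 2 of $G$'s niceness (since $U^\ell$ contains a new vertex and $|U^\ell \cap (X \cup \{y_\ell\})| = k-1$), while in the case $|U \cap X| = k$ one observes that $s_i = \mathds{1}_{x'_i \in U}$ and $s_0 = 0$, so the master inequality already yields $\Delta(U) \geq |U \cap A_\ell| = k+\ell$. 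The recurring challenge throughout is the bookkeeping: ensuring the discount terms $s_0$ and $s_i$ never exceed the $x'_i$-budget, and invoking the stronger niceness estimates in exactly the cases where the extra $+1$ is needed.
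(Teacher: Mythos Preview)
Your proposal is correct and follows essentially the same inductive strategy as the paper: both decompose $\Delta(U)$ over the subgraphs $G_{\ell-1}^1,\dots,G_{\ell-1}^k,G^{\ell}$, apply the induction hypothesis (resp.\ niceness of $G$) to the pieces, and then do casework. The only cosmetic difference is that the paper first collapses the $x'_i$-contributions to obtain the clean intermediate inequality $\Delta(U)\geq \Delta(U_0)+\ell$ (equation \eqref{eq:Delta(U)_Sarkozy_Selkow_1}) and deduces Item~3 from Items~1--2 at the end, whereas you retain the $x'_i$ and $s_i$ terms in a single master inequality and handle all three items within the induction; both packagings yield the same bounds.
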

	\begin{proof}
		We first prove Items 1-2 by induction on $\ell$ and then deduce Item 3 from Item 1.
		In the base case $\ell = 0$, Items 1-2 immediately follow from the fact that $G_0 = G$ is nice and from our choice of $A_0 = A$ via Definition \ref{def:main}.
		Let now $\ell \geq 1$ and let $U \subseteq V(G_{\ell})$ be such that $\{y_0,\dots,y_{\ell-1}\} \subseteq U$. We start with Item 1.
		For $1 \leq i \leq k$, put $U_i := U \cap V(G_{\ell-1}^i)$. Also, put 
		$U_0 := U \cap V(G^{\ell})$ and note that
		\begin{equation}\label{eq:Sarkozy_Selkow_nice_eq}
		|U \cap A_{\ell}| = |U_0 \cap \{x_1,\dots,x_k,y_{\ell}\}| + \ell
		\end{equation}
		because $y_0,\dots,y_{\ell-1} \in U$ by assumption.
		Since the subgraphs 
		$G_{\ell-1}^1,\dots,G_{\ell-1}^k,G^{\ell}$, which comprise $G_{\ell}$, are pairwise edge-disjoint,
		we have
		$
		e(U) = \sum_{i=0}^{k}{e(U_i)}.
		$
		Observe also that
		$$
		|U| = \sum_{i=0}^{k}{|U_i|} -
		(k-1) \cdot (|U \cap X| + |U \cap \{y_0,\dots,y_{\ell-1}\}| ),
		$$
		as each element of $X \cup \{y_0,\dots,y_{\ell-1}\}$ is contained in exactly $k$ of the sets $V(G_{\ell-1}^1),\dots,V(G_{\ell-1}^k),V(G^{\ell})$ and each of the other vertices of $G_{\ell}$ is contained in exactly one of these sets. 
		From the above formulas for $e(U)$ and $|U|$, it follows that
		\begin{equation}\label{eq:Delta(U)_Sarkozy_Selkow}
			\Delta(U) =
			\sum_{i=0}^{k}{\Delta(U_i)} -
			(k-1) \cdot ( |U \cap X| + \ell ).
		\end{equation}
		Here we used the fact that $\{y_0,\dots,y_{\ell-1}\} \subseteq U$ by assumption.
		Recall that by the definition of $G_{\ell}$, for each $1 \leq i \leq k$, we have 
		$$A_{\ell-1}(G_{\ell-1}^i) = \{x_1,\dots,x_k,y_0,\dots,y_{\ell-1},x'_i\} \setminus \{x_i\}.$$ 
		By the induction hypothesis for $\ell - 1$, applied to the copy $G_{\ell-1}^i$ of $G_{\ell-1}$, we get
		\begin{equation}\label{eq:Delta(U_i)_Sarkozy_Selkow}
		\Delta(U_i) \geq
		|U_i \cap A_{\ell-1}(G_{\ell-1}^i)| - \mathds{1}_{A_{\ell-1}(G_{\ell-1}^i) \subseteq U_i} \geq
		|U_i \cap (A_{\ell} \setminus \{x_i,y_{\ell}\})|,
		\end{equation}
		where the second inequality follows by considering whether $x'_i \in U_i$ or not. 
		From \eqref{eq:Delta(U_i)_Sarkozy_Selkow}, we obtain
		\begin{equation}\label{eq:Delta(U_1)+...+Delta(U_k)_Sarkozy_Selkow}
		\begin{split}
		\sum_{i=1}^{k}{\Delta(U_i)} &\geq
		\sum_{i=1}^{k}{|U_i \cap (A_{\ell} \setminus \{x_i,y_{\ell}\})|} \\ &=
		(k-1) \cdot |U \cap X| + k \cdot |U \cap \{y_0,\dots,y_{\ell-1}\}| \\ &=
		(k-1) \cdot |U \cap X| + k\ell,
		\end{split}
		\end{equation}
		where in the first equality we used the fact that each element of $X$ belongs to exactly $k - 1$ of the sets $A_{\ell} \setminus \{x_i,y_{\ell}\}$ (where $1 \leq i \leq k$) and each element of $\{y_0,\dots,y_{\ell-1}\}$ belongs to all of these sets.
		Plugging the above into \eqref{eq:Delta(U)_Sarkozy_Selkow} gives
		\begin{equation}\label{eq:Delta(U)_Sarkozy_Selkow_1}
		\Delta(U) \geq
		\Delta(U_0) + \ell.
		\end{equation}
		Since $G$ is nice and $G^{\ell}$ is a copy of $G$ in which $y_{\ell}$ plays the role of $y_0(G)$, we have
		\begin{equation}\label{eq:Delta(U_0)_Sarkozy_Selkow}
		\Delta(U_0) \geq |U_0 \cap \{x_1,\dots,x_k,y_{\ell}\}| - \mathds{1}_{\{x_1,\dots,x_k,y_{\ell}\} \subseteq U_{0}}.
		\end{equation}
		Note that $\mathds{1}_{\{x_1,\dots,x_k,y_{\ell}\} \subseteq U_0} = \mathds{1}_{A_{\ell} \subseteq U}$ because $y_0,\dots,y_{\ell-1} \in U$. 
		By combining \eqref{eq:Sarkozy_Selkow_nice_eq}, \eqref{eq:Delta(U)_Sarkozy_Selkow_1}, \eqref{eq:Delta(U_0)_Sarkozy_Selkow}, we get
		$$
		\Delta(U) \geq
		\Delta(U_0) + \ell
		\geq
		|U_0 \cap \{x_1,\dots,x_k,y_{\ell}\}| - \mathds{1}_{\{x_1,\dots,x_k,y_{\ell}\} \subseteq U_0} + \ell =
		|U \cap A_{\ell}| - \mathds{1}_{A_{\ell} \subseteq U},
		$$
		thus establishing Item 1.
		
		Next, we prove Item 2. Suppose then that $|U \cap X| \leq k - 2$ and $U \setminus A_{\ell} \neq \emptyset$. The inequality $|U \cap X| \leq k - 2$ implies that
		$|U_0 \cap \{x_1,\dots,x_k,y_{\ell}\}| \leq k-1$ and that $A_{\ell-1}(G_{\ell-1}^i) \not\subseteq U_i$ for each $1 \leq i \leq k$.
		Since $U \setminus A_{\ell} \neq \emptyset$, there is $0 \leq i \leq k$ such that $U_i \setminus A_{\ell} \neq \emptyset$. Suppose first that $i = 0$. Then $U_0 \setminus \{x_1,\dots,x_k,y_{\ell}\} \neq \emptyset$, which, combined with $|U_0 \cap \{x_1,\dots,x_k,y_{\ell}\}| \leq k-1$, implies that
		$\Delta(U_0) \geq |U_0 \cap \{x_1,\dots,x_k,y_{\ell}\}| + 1$. Here we used the niceness of $G$ (see Item 2 in Definition \ref{def:main}).
		By plugging our bound on $\Delta(U_0)$ into  \eqref{eq:Delta(U)_Sarkozy_Selkow_1} and using \eqref{eq:Sarkozy_Selkow_nice_eq},
		we get
		$\Delta(U) \geq \Delta(U_0) + \ell \geq |U_0 \cap \{x_1,\dots,x_k,y_{\ell}\}| + 1 + \ell =
		|U \cap A_{\ell}| + 1$, as required.
		Now suppose that $1 \leq i \leq k$.
		We claim \nolinebreak that
		\begin{equation}\label{eq:Delta(U_i)_Sarkozy_Selkow_1}
		\Delta(U_i) \geq |U_i \cap (A_{\ell} \setminus \{x_i,y_{\ell}\})| + 1.
		\end{equation}
		In other words, we show that the inequality bounding the leftmost term in \eqref{eq:Delta(U_i)_Sarkozy_Selkow} by the rightmost one is strict.
		If $x'_i \in U_i$, then
		$$
		\Delta(U_i) \geq |U_i \cap A_{\ell-1}(G_{\ell-1}^i)| - \mathds{1}_{A_{\ell-1}(G_{\ell-1}^i) \subseteq U_i} = 
		|U_i \cap A_{\ell-1}(G_{\ell-1}^i)| \geq |U_i \cap (A_{\ell} \setminus \{x_i,y_{\ell}\})| + 1,$$ as required. Here, in the first inequality we used \eqref{eq:Delta(U_i)_Sarkozy_Selkow}, in the equality we used the fact that $A_{\ell-1}(G_{\ell-1}^i) \not\subseteq U_i$ (as mentioned above) and in the last inequality we used the fact that 
		\linebreak $x'_i \in A_{\ell-1}(G_{\ell-1}^i) \setminus A_{\ell}$. So suppose now
		 that $x'_i \notin U_i$ and note that in this case
		$U_i \setminus A_{\ell-1}(G_{\ell-1}^i) \neq \emptyset$ because $U_i \setminus A_{\ell} \neq \emptyset$ and $A_{\ell-1}(G_{\ell-1}^i) \subseteq A_{\ell} \cup \{x'_i\}$.
		Moreover, the intersection of $U_i$ with the set $\{x_1(G_{\ell-1}^i),\dots,x_k(G_{\ell-1}^i)\} = \{ x_1,\dots,x_k,x'_i \} \setminus \{x_i\}$ is of size at most $k - 2$, because $|U \cap X| \leq k-2$ and $x'_i \notin U_i$. 	
		So, by the induction hypothesis applied to the copy $G_{\ell-1}^i$ of $G_{\ell-1}$, we have
		$$
		\Delta(U_i) \geq |U_i \cap A_{\ell-1}(G_{\ell-1}^i)| + 1 \geq
		|U_i \cap (A_{\ell} \setminus \{x_i,y_{\ell}\})| + 1,
		$$
		where the last inequality uses that $A_{\ell} \setminus \{x_i,y_{\ell}\} \subseteq A_{\ell-1}(G_{\ell-1}^i)$. This proves \eqref{eq:Delta(U_i)_Sarkozy_Selkow_1}.
		By repeating the calculation in \eqref{eq:Delta(U_1)+...+Delta(U_k)_Sarkozy_Selkow} and plugging in
		\eqref{eq:Delta(U_i)_Sarkozy_Selkow_1} and \eqref{eq:Delta(U_i)_Sarkozy_Selkow} (which we use for each $j \in [k] \setminus \{i\}$), we obtain
		\begin{equation*}
		\sum_{i=1}^{k}{\Delta(U_i)} \geq
		|U_i \cap (A_{\ell} \setminus \{x_i,y_{\ell}\})| + 1 + 
		\sum_{j \in [k] \setminus \{i\}}{|U_j \cap (A_{\ell} \setminus \{x_j,y_{\ell}\})|} =
		(k-1) \cdot |U \cap X| + k\ell + 1.
		\end{equation*}
		By plugging the above into \eqref{eq:Delta(U)_Sarkozy_Selkow}, we get
		\begin{align*}
		\Delta(U) &= \sum_{i=0}^{k}{\Delta(U_i)} - (k-1) \cdot (|U \cap X| + \ell)
		\geq
		\Delta(U_0) + \ell + 1
		\\ &\geq
		|U_0 \cap \{x_1,\dots,x_k,y_{\ell}\}| - \mathds{1}_{\{x_1,\dots,x_k,y_{\ell}\} \subseteq U_{0}} + \ell + 1 \\ &=
		|U_0 \cap \{x_1,\dots,x_k,y_{\ell}\}| + \ell + 1 =
		|U \cap A_{\ell}| + 1,
		\end{align*} 
		where the inequality uses \eqref{eq:Delta(U_0)_Sarkozy_Selkow}, the penultimate equality holds because $\{x_1,\dots,x_k,y_{\ell}\} \not\subset U_0$ as $|U \cap X| \leq k-2$ and the last equality uses \eqref{eq:Sarkozy_Selkow_nice_eq}. 
		This completes the inductive proof of Items 1-2.
		
		It remains to derive Item 3 from Item 1. Suppose then that $|U \cap X| \geq k-1$ and that $U_0 \not\subseteq X$.
		If $X \subseteq U$ or $y_{\ell} \in U$, then $|U \cap A_{\ell}| \geq k + \ell$, in which case Item 1 gives $\Delta(U) \geq k + \ell$, as required.
		So we may assume that $|U \cap X| = k-1$ and $y_{\ell} \notin U$. Since $U_0$ is not contained in $X$, we must have $U_0 \setminus \{x_1,\dots,x_k,y_{\ell}\} \neq \emptyset$. So by the niceness of $G$ we have
		$\Delta(U_0) \geq |U_0 \cap \{x_1,\dots,x_k,y_{\ell}\}| + 1 = k$. Plugging this into \eqref{eq:Delta(U)_Sarkozy_Selkow_1} gives $\Delta(U) \geq k + \ell$, as required.
	\end{proof}

	Item 3 of Lemma \ref{lem:Sarkozy_Selkow} will be derived from the following claim, in a manner similar to the derivation of Lemma \ref{lem:main} from Lemma \ref{lem:super_exponential}. 
	\begin{claim}\label{claim:Sarkozy_Selkow_main}
	For every $\ell \geq 0$, $r \geq 0$ and $\varepsilon \in (0,1)$, there are $\delta = \delta_{\ref{claim:Sarkozy_Selkow_main}}(\ell,r,\varepsilon) \in (0,1)$ and $n_0 = n_0(\ell,r,\varepsilon)$ such that, for every $3$-graph $H$ on $n \geq n_0$ vertices, if $H$ contains at least $\varepsilon n^{k + \ell}$ copies of $G_{\ell}$, then (at least) one of the following conditions is satisfied:
		\begin{enumerate}
			\item There is $0 \leq q \leq e(G_{\ell})-1$ such that, for every $1 \leq i \leq r$, the $3$-graph $H$ contains a $(v',e')$-configuration which contains a copy of $G_{\ell}$ and satisfies
			$v' - e' \leq k + \ell$ and
			$e' = q + i \cdot (e(G_{\ell}) - q)$.
			\item $H$ contains at least $\delta \cdot n^{k + \ell + 1}$ copies of $G_{\ell+1}$.
		\end{enumerate}
	\end{claim}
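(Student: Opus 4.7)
The plan is to mirror the proof of Lemma \ref{lem:super_exponential}, with the set $A_\ell$ (of size $k+\ell+1$) playing the role of $A$ and the niceness of $G_\ell$ captured by Claim \ref{claim:Sarkozy_Selkow_nice}. First randomly partition $V(H)$ into classes $(C_v)_{v \in V(G_\ell)}$, declare a copy of $G_\ell$ \emph{good} if its embedding $\varphi$ sends each $v$ into $C_v$, and fix a partition yielding at least $v(G_\ell)^{-v(G_\ell)}\varepsilon n^{k+\ell}$ good copies, identified with their embeddings and collected in $\mathcal{F}$. Since Claim \ref{claim:Sarkozy_Selkow_nice} requires the hypothesis $\{y_0,\ldots,y_{\ell-1}\}\subseteq U$, partition $\mathcal{F}$ further by the tuple $\vec{u} = (\varphi(y_0),\ldots,\varphi(y_{\ell-1}))$ into buckets $\mathcal{F}_{\vec{u}}$, and work inside a single ``heavy'' bucket (with $|\mathcal{F}_{\vec{u}}| \geq \beta n^k$ for an absolute $\beta$), aggregating over heavy buckets at the end.

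Inside a heavy bucket every pair automatically satisfies the $y$-hypothesis, so build an auxiliary graph $\mathcal{G}_{\vec{u}}$ on $\mathcal{F}_{\vec{u}}$ whose edges are the pairs $\{\varphi_1,\varphi_2\}$ whose agreement set $U$ falls in one of the three regimes of Claim \ref{claim:Sarkozy_Selkow_nice} that force $\Delta(U)\geq k+\ell$, namely $|U\cap A_\ell|\geq k+\ell$, or $|U\cap X|\leq k-2$ with $U\setminus A_\ell\neq\emptyset$, or $|U\cap X|\geq k-1$ with $U\cap V(G^\ell)\not\subseteq X$; these regimes are monotone under $U\supseteq U_0$ by the same verification as in the proof of Lemma \ref{lem:super_exponential}. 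If some $\varphi$ has large degree in $\mathcal{G}_{\vec{u}}$, a pigeonhole step followed by a multicolor Ramsey argument produces $r$ embeddings sharing a common $U\subsetneq V(G_\ell)$ with $\Delta(U)\geq k+\ell$, and the $\Delta$-calculation of Lemma \ref{lem:super_exponential} yields, for $q:=|U|$ and every $1\leq i\leq r$, a $(v',e')$-configuration containing a copy of $G_\ell$ with $v'-e'\leq k+\ell$ and $v'=q+i(v(G_\ell)-q)$, establishing Item 1 of the claim. Otherwise an independent set $\mathcal{F}^*_{\vec{u}}\subseteq\mathcal{F}_{\vec{u}}$ of size $\Omega(n^k)$ exists; since the cliques $\varphi(A_\ell)$ all share $\{u_0,\ldots,u_{\ell-1}\}$, factor these out and form a $k$-uniform hypergraph $J_{\vec{u}}$ on $V(H)\setminus\{u_0,\ldots,u_{\ell-1}\}$ by placing a $(k+1)$-clique on $\{\varphi(x_1),\ldots,\varphi(x_k),\varphi(y_\ell)\}$ for each $\varphi\in\mathcal{F}^*_{\vec{u}}$. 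Two such cliques sharing a $k$-facet would force $|U\cap A_\ell|\geq k+\ell$ and hence $\mathcal{G}_{\vec{u}}$-adjacency, contradicting independence, so the cliques are pairwise edge-disjoint and Theorem \ref{thm:removal} produces $\Omega(n^{k+1})$ $(k+1)$-cliques in $J_{\vec{u}}$, all but $O(n^k)$ of which are \emph{colorful}.

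For a colorful clique $\{c_1,\ldots,c_k,d_\ell\}$ each of its $k+1$ facets lies in a unique $K_\varphi$ with $\varphi\in\mathcal{F}^*_{\vec{u}}$; label by $\varphi_i$ the embedding coming from the facet omitting $c_i$ and by $\psi_\ell$ the embedding coming from the facet omitting $d_\ell$. Stitch these into a map $\Phi:V(G_{\ell+1})\to V(H)$ by sending $x_i\mapsto c_i$, $y_j\mapsto u_j$ for $j<\ell$, $y_\ell\mapsto d_\ell$, $y_{\ell+1}\mapsto\psi_\ell(y_\ell)$, $x'_i\mapsto\varphi_i(x_i)$, using $\varphi_i$ on the rest of the $i$-th copy $G_\ell^i$ of $G_\ell$, and using the restriction of $\psi_\ell$ to $V(G^\ell)$ on the extra copy $G^{\ell+1}$. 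The main obstacle is verifying that $\Phi$ is an embedding: a collision between $\varphi_i$ and $\varphi_{i'}$ would force $U\setminus A_\ell\neq\emptyset$ while $|U\cap X|=k-2$, triggering Claim \ref{claim:Sarkozy_Selkow_nice}(2); a collision between $\varphi_i$ and $\psi_\ell$ inside $\psi_\ell(V(G^\ell))$ would force (via goodness) an agreement at some $v\in V(G^\ell)\setminus X$, triggering Claim \ref{claim:Sarkozy_Selkow_nice}(3); both scenarios contradict the independence of $\mathcal{F}^*_{\vec{u}}$, and collisions of $\psi_\ell$ with $\varphi_i$ outside $\psi_\ell(V(G^\ell))$ are harmless since $G^{\ell+1}$ does not touch those vertices. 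Distinct colorful cliques produce distinct copies of $G_{\ell+1}$ within a bucket, and copies from different heavy buckets disagree on $(y_0,\ldots,y_{\ell-1})$, so summing the $\Omega(n^{k+1})$ per-bucket contributions over the heavy buckets produces the $\delta n^{k+\ell+1}$ copies required by Item 2. The delicate points throughout are the asymmetric roles of $X$ and $\{y_0,\ldots,y_\ell\}$ within $A_\ell$ and the need for the heavy-bucket reduction to legitimize Claim \ref{claim:Sarkozy_Selkow_nice} in both the Ramsey step and the injectivity verification.
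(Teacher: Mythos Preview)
Your overall architecture is correct and matches the paper's: random partition, auxiliary adjacency graph on embeddings, Ramsey dichotomy, clique removal on $\{\varphi(x_1),\dots,\varphi(x_k),\varphi(y_\ell)\}$, and stitching of $G_{\ell+1}$ from a colorful clique. The bucketing by $(\varphi(y_0),\dots,\varphi(y_{\ell-1}))$ is also exactly what the paper does, only in a different order (the paper takes the global independent set first and buckets afterwards).

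There is, however, a genuine gap in your edge conditions for $\mathcal{G}_{\vec u}$. Your second regime, ``$|U\cap X|\le k-2$ with $U\setminus A_\ell\neq\emptyset$'', does \emph{not} force $\Delta(U)\ge k+\ell$: Claim~\ref{claim:Sarkozy_Selkow_nice}(2) only gives $\Delta(U)\ge |U\cap A_\ell|+1$, and for, say, $U=\{y_0,\dots,y_{\ell-1},z\}$ with $z\notin A_\ell$ this is $\ell+1<k+\ell$. Nor are your three regimes upward-closed: if $U_0$ satisfies your second regime and $U\supseteq U_0$ picks up an extra element of $X$, then $|U\cap X|$ may become $k-1$ while the witness $z\in U\setminus A_\ell$ lies outside $V(G^\ell)$, so $U$ falls into none of your regimes. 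Both failures break the high-degree/Ramsey branch: after pigeonhole and Ramsey you cannot conclude $\Delta(U)\ge k+\ell$, and hence the computation $v'-e'\le k+\ell$ collapses. The paper's condition (ii) is more delicate: it requires $y_\ell\in U$ together with either $|U\cap X|\ge k-1$ or $|U\cap X|=k-2$ and $U\setminus A_\ell\neq\emptyset$. This is upward-closed \emph{and} implies $\Delta(U)\ge k+\ell$, while still being triggered by every bad $\varphi_i/\varphi_{i'}$ overlap in the injectivity step (there one always has $y_\ell\in U$ and $|U\cap X|=k-2$).

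A secondary omission: to sum the $\Omega(n^{k+1})$ per-bucket contributions to $\Omega(n^{k+\ell+1})$ you need $\Omega(n^\ell)$ heavy buckets, and plain averaging does not give this without an upper bound on bucket sizes. You need the observation (which the paper makes) that in the low-degree case the independent set inside a bucket has size at most $n^k$ (via condition (i)), hence each bucket has size at most $d\cdot n^k$; only then does the averaging yield $\Omega(n^\ell)$ heavy buckets.
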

	\begin{proof}
	We proceed similarly to the proof of Lemma \ref{lem:super_exponential}.
	Fixing $\ell \geq 0$, we set $v := v(G_{\ell})$,
	$$
	\zeta := 2^{-v (1 + 2^{v}r)} \cdot v^{-v} \cdot \varepsilon,
	$$
	$
	\delta = \delta(\ell,r,\varepsilon) =
	\frac{\zeta}{4} \cdot
	\gamma\left( k, \frac{\zeta}{2} \right)
	$
	and
	$n_0 = n_0(\ell,r,\varepsilon) = \frac{2}{\gamma(k,\frac{\zeta}{2})}$,
	where $\gamma$ is from Theorem \ref{thm:removal}.
	
	Let $H$ be a $3$-graph on $n \geq n_0$ vertices which contains at least $\varepsilon n^{k + \ell}$ copies of $G_{\ell}$.
	Partition the vertices of $H$ randomly into sets $(C_z : z \in V(G_{\ell}))$ by choosing, for each vertex $x \in V(H)$, a vertex $z \in V(G_{\ell})$ uniformly at random and independently (of the choices made for all other vertices) and placing $x$ in part $C_z$.
	A copy of $G_{\ell}$ in $H$ will be called {\em good} if, for each $z \in V(G_{\ell})$, the vertex playing the role of $z$ in this copy belongs to $C_z$.
	Since $H$ contains at least $\varepsilon n^{k+\ell}$ copies of $G_{\ell}$, there are in expectation at least $v^{-v} \cdot \varepsilon n^{k+\ell}$ good copies of $G_{\ell}$. So fix a partition $(C_z : z \in V(G_{\ell}))$ with at least this number of good copies of $G_{\ell}$ and denote the set of these copies by $\mathcal{F}$. 
	We will identify each good copy of $G_{\ell}$ with the corresponding embedding
	$\varphi : V(G_{\ell}) \rightarrow V(H)$, while noting that $\varphi(z) \in C_z$ for each $z \in V(G_{\ell})$.
	Recall that $G^{\ell}$ is the copy of $G$ featured in the definition of $G_{\ell}$. 
	Define an auxiliary graph $\mathcal{G}$ on $\mathcal{F}$ as follows. For each pair of distinct $\varphi_1,\varphi_2 \in \mathcal{F}$, we set
	$
	U(\varphi_1,\varphi_2) := \{z \in V(G_{\ell}) : \varphi_1(z) = \varphi_2(z)\}
	$
	and let $\{\varphi_1,\varphi_2\}$ be an edge in $\mathcal{G}$ if and only if
	$U := U(\varphi_1,\varphi_2)$ satisfies $\{y_0,\dots,y_{\ell-1}\} \subseteq U$, as well as (at least) one of the following three conditions:
	\begin{enumerate}
		\item[(i)] $|U \cap A_{\ell}| \geq k + \ell$.
		\item[(ii)] $y_{\ell} \in U$ and either $|U \cap X| \geq k-1$ or $|U \cap X| = k-2$ and $U \setminus A_{\ell} \neq \emptyset$.
		\item[(iii)]
		$|U \cap X| \geq k-1$ and $U \cap V(G^{\ell})$ is not contained in $X$.
		\end{enumerate}
	
	Suppose first that there is $\varphi \in \mathcal{F}$ whose degree in $\mathcal{G}$ is at least
	$$
	d := 2^{v (1 + 2^{v}r)} .
	$$
	We show that in this case, Item 1 in Claim \ref{claim:Sarkozy_Selkow_main} holds. 
	Let $\varphi_1,\dots,\varphi_d$ be distinct neighbours of $\varphi$ in $\mathcal{G}$.
	By the pigeonhole principle, there is $I' \subseteq [d]$ of size at least $2^{-v}d = 2^{v 2^{v}r}$ and a set $U' \subseteq V(G_{\ell})$ such that, for all $i \in I'$, it holds that $U(\varphi,\varphi_i) = U'$.
	As in the proof of Lemma \ref{lem:super_exponential}, we consider the coloring
	$\{i,j\} \mapsto U(\varphi_i,\varphi_j)$ of the pairs $\{i,j\} \in \binom{I'}{2}$ and use a well-known bound for multicolor Ramsey numbers \cite{CFS} to obtain
	a set $I \subseteq I'$ of size $|I| = r$ and a set $U \subseteq V(G_{\ell})$ such that $U(\varphi_i,\varphi_j) = U$ for all $\{i,j\} \in \binom{I}{2}$. Observe that for each $\{i,j\} \in \binom{I}{2}$, we have $U \supseteq U(\varphi,\varphi_i) \cap U(\varphi,\varphi_j) = U'$. In particular, $\{y_0,\dots,y_{\ell-1}\} \subseteq U' \subseteq U$ (by the definition of $\mathcal{G}$). Note also that $U \neq V(G_{\ell})$ because the copies $(\varphi_i : i \in I)$ of $G_{\ell}$ are distinct.
	
	We now use Claim \ref{claim:Sarkozy_Selkow_nice} to prove that $\Delta(U) \geq k + \ell$. 
	The definition of the graph $\mathcal{G}$ implies that the set $U'$ must satisfy one of the conditions (i)-(iii) above. Note that for each of these three conditions, if it is satisfied by $U'$, then it is also satisfied by every superset of $U'$ and, in particular, by $U$. Now, if $U$ satisfies Condition (i) (resp. (iii)), then the bound $\Delta(U) \geq k + \ell$ immediately follows from Item 1 (resp. 3) of Claim \ref{claim:Sarkozy_Selkow_nice}. Suppose then that $U$ satisfies Condition (ii). If $|U \cap X| \geq k-1$, then $|U \cap A_{\ell}| \geq k + \ell$ (since $y_0,\dots,y_{\ell-1} \in U$ and Condition (ii) supposes that $y_{\ell} \in U$), so again we can use Item 1 of Claim \ref{claim:Sarkozy_Selkow_nice}. Finally, if $|U \cap X| = k - 2$ and $U \setminus A_{\ell} \neq \emptyset$, then we have 
	$\Delta(U) \geq |U \cap A_{\ell}| + 1 = k + \ell$, where the inequality is Item 2 of Claim \ref{claim:Sarkozy_Selkow_nice} and the equality holds because $\{y_0,\dots,y_{\ell}\} \subseteq U$ and $|U \cap X| = k - 2$. We have thus shown that $\Delta(U) \geq k + \ell$ in all cases. 

	Suppose without loss of generality that $I = [r]$. Put
	$W := \varphi_1(U) = \dots = \varphi_r(U)$ and denote
	$V_i := \varphi_i(V(G_{\ell}) \setminus U) \subseteq V(H)$ for each
	$1 \leq i \leq r$. Note that $V_1,\dots,V_r$ are pairwise disjoint.
	Now, fix any $1 \leq i \leq r$ and set $V := V_1 \cup \dots \cup V_i \cup W$. Then
	$$|V| = |U| + i \cdot (v(G_{\ell}) - |U|) = i \cdot v(G_{\ell}) - (i-1) \cdot |U|$$ and
	$$
	e_H(V) \geq e(U) + i \cdot (e(G_{\ell}) - e(U)) =
	i \cdot e(G_{\ell}) - (i-1) \cdot e(U).
	$$
	It follows that
	\begin{align*}
	|V| - e_{H}(V) &\leq i \cdot (v(G_{\ell}) - e(G_{\ell})) - (i-1) \cdot (|U| - e(U))
	=
	i \cdot (k + \ell) - (i-1) \cdot \Delta(U)
	\\ &\leq i \cdot (k + \ell) - (i-1) \cdot (k + \ell) = k + \ell.
	\end{align*}
	Moreover, it is evident that $H[V]$ contains a copy of $G_{\ell}$. Finally,
	note that 
	$e(U) = |U| - \Delta(U) \leq |U| - (k + \ell) \leq v(G_{\ell}) - 1 - (k + \ell) = e(G_{\ell}) - 1$, where the second inequality holds because $U \neq V(G_{\ell})$.
	Combining all of the above, we see that the assertion of Item 1 in the claim holds with $q := e(U)$.
	This completes the proof in the case where $\mathcal{G}$ has a vertex of degree at least $d$.
	
	From now on we assume that the maximum degree of $\mathcal{G}$ is strictly smaller than $d$. Let $\mathcal{F}^* \subseteq \mathcal{F}$ be an independent set in $\mathcal{G}$ of size at least $v(\mathcal{G})/d = |\mathcal{F}|/d$. For each $\ell$-tuple of vertices
	$u = (u_0,\dots,u_{\ell-1}) \in \tilde{C} := C_{y_0} \times \dots \times C_{y_{\ell-1}}$, we denote by $\mathcal{F}^*(u)$ the set of all $\varphi \in \mathcal{F}^*$ such that $\varphi(y_i) = u_i$ for each $0 \leq i \leq \ell-1$. Note that
	\begin{equation}\label{eq:Sarkozy_Selkow_embedding_count}
	\sum_{u \in \tilde{C}}{|\mathcal{F}^*(u)|} = |\mathcal{F}^*| \geq \frac{|\mathcal{F}|}{d} \geq
	\frac{\varepsilon n^{k+\ell}}{v^v d} =
	\zeta n^{k + \ell} \; .
	\end{equation}
	
	We claim that $|\mathcal{F}^*(u)| \leq n^k$ for each $u \in \tilde{C}$. To see this, fix any such $u$ and let $\varphi_1,\varphi_2 \in \mathcal{F}^*(u)$ be distinct. If $\varphi_1(x_i) = \varphi_2(x_i)$ for each $1 \leq i \leq k$, then $\{x_1,\dots,x_k,y_0,\dots,y_{\ell-1}\} \subseteq U(\varphi_1,\varphi_2)$. But then $U$ satisfies Condition (i) above, implying that $\{\varphi_1,\varphi_2\} \in E(\mathcal{G})$, contradicting the fact that $\mathcal{F}^*$ is an independent set in $\mathcal{G}$. So we see that for each $u \in \tilde{C}$ and each $\varphi \in \mathcal{F}^*(u)$, the values of $\varphi(x_1),\dots,\varphi(x_k)$ determine $\varphi$ uniquely. It follows that indeed $|\mathcal{F}^*(u)| \leq n^k$.
	Now, by using \eqref{eq:Sarkozy_Selkow_embedding_count} and averaging, we get that there are at least $\frac{\zeta}{2} n^{\ell}$ tuples $u \in \tilde{C}$ which satisfy $|\mathcal{F}^*(u)| \geq \frac{\zeta}{2}n^k$. Let $C \subseteq \tilde{C}$ be the set of all such tuples $u$. We will show that for every $u = (u_0,\dots,u_{\ell-1})\in C$, there are at least $\frac{1}{2}\gamma(k,\frac{\zeta}{2}) \cdot n^{k+1}$ copies of $G_{\ell+1}$ in $H$ in which $u_i$ plays the role of $y_i(G_{\ell+1})$ for every 
	$0 \leq i \leq \ell - 1$. Combining this with the fact that $|C| \geq \frac{\zeta}{2}n^{\ell}$, we will conclude that $H$ contains at least
	$\frac{\zeta}{2}n^{\ell} \cdot \frac{1}{2}\gamma(k,\frac{\zeta}{2}) \cdot n^{k+1} = \delta n^{k + \ell + 1}$ copies of $G_{\ell + 1}$, as required.

	So fix any $u \in C$. We define an auxiliary $k$-uniform $(k+1)$-partite hypergraph $J(u)$ with parts $C_{x_1},\dots,C_{x_k},C_{y_{\ell}}$, as follows. For each $\varphi \in \mathcal{F}^*(u)$, put a $k$-uniform $(k+1)$-clique in $J(u)$ on the vertices $\varphi(x_1) \in C_{x_1},\dots,\varphi(x_k) \in C_{x_k}, \; \varphi(y_{\ell}) \in C_{y_{\ell}}$. We denote this clique by $K_{\varphi}$.
	We claim that the cliques $(K_{\varphi} : \varphi \in \mathcal{F}^*(u))$ are pairwise edge-disjoint. To this end, fix any pair of distinct $\varphi_1,\varphi_2 \in \mathcal{F}^*(u)$ and suppose, for the sake of contradiction, that the cliques $K_{\varphi_1},K_{\varphi_2}$ share an edge. Then there is $Z \subseteq \{x_1,\dots,x_k,y_{\ell}\}$ of size $|Z| = k$ such that
	$\varphi_1(z) = \varphi_2(z)$ for every $z \in Z$. It follows that
	$Z \cup \{y_0,\dots,y_{\ell-1}\} \subseteq U(\varphi_1,\varphi_2)$, where $\{y_0,\dots,y_{\ell-1}\} \subseteq U(\varphi_1,\varphi_2)$ holds because $\varphi_1,\varphi_2 \in \mathcal{F}^*(u)$. Therefore,  $|U(\varphi_1,\varphi_2) \cap A_{\ell}| \geq k + \ell$, implying that  $U(\varphi_1,\varphi_2)$ satisfies Condition (i) above. But this implies that $\{\varphi_1,\varphi_2\} \in E(\mathcal{G})$, which contradicts the fact that $\mathcal{F}^*(u) \subseteq \mathcal{F}(u)$ is an independent set in $\mathcal{G}$.
	We have thus shown that the cliques $(K_{\varphi} : \varphi \in \mathcal{F}^*(u))$ are indeed pairwise edge-disjoint. 
	
	It follows from the previous paragraph that $J(u)$ contains a collection of
	$
	|\mathcal{F}^*(u)| \geq \frac{\zeta}{2} n^k
	$
	pairwise edge-disjoint $(k+1)$-cliques. By Theorem \ref{thm:removal}, the number of $(k+1)$-cliques in $J(u)$ is at least $\gamma(k,\frac{\zeta}{2}) \cdot n^{k+1}$. A $(k+1)$-clique $K$ in $J(u)$ is called {\em colorful} if it is not equal to $K_{\varphi}$ for any $\varphi \in \mathcal{F}^*(u)$.
	Since there are at most $|\mathcal{F}^*(u)| \leq n^{k}$ non-colorful $(k+1)$-cliques, the number of colorful $(k+1)$-cliques in $J(u)$ is at least
	$\gamma(k,\frac{\zeta}{2}) \cdot n^{k+1} - n^{k} \geq \frac{1}{2} \gamma(k,\frac{\zeta}{2}) \cdot n^{k+1}$ (here we use our choice of $n_0$).
	
	To complete the proof, it remains to show that each colorful $(k+1)$-clique in $J(u)$ corresponds to a copy of $G_{\ell+1}$ in $H$.
	Fix any colorful $(k+1)$-clique $K = \{w_1,\dots,w_k,u_{\ell}\}$, where $u_{\ell}$ is the unique vertex of $K$ contained in $C_{y_{\ell}}$ and, for each $1 \leq i \leq k$, $w_i$ is the unique vertex of $K$ contained in $C_{x_i}$. By the definition of $J(u)$, each of the $k+1$ edges of $K$ corresponds to an embedding of $G_{\ell}$ into $H$. More precisely, there are $\varphi_0,\varphi_1,\dots,\varphi_k \in \mathcal{F}^*(u)$ such that:
	\begin{itemize}
		\item For each $1 \leq i \leq k$, $\varphi_i(y_{\ell}) = u_{\ell}$ and $\varphi_i(x_j) = w_j$ for each $j \in [k] \setminus \{i\}$.
		\item $\varphi_0(x_i) = w_i$ for each $1 \leq i \leq k$.
	\end{itemize}
	We claim that $\varphi_0,\dots,\varphi_{k}$ are pairwise distinct. 
	Assume, for the sake of contradiction, that $\varphi_i = \varphi_{i'} =: \varphi$ for some $0 \leq i < i' \leq k$. 
	Then $\varphi(x_j) = w_j$ for each $1 \leq j \leq k$ because of the two items above, as (evidently) one of $i,i'$ does not equal $j$.  Similarly, since $i,i'$ cannot both equal $0$, the first item above implies that $\varphi(y_{\ell}) = u_{\ell}$. We now see that $K = K_{\varphi}$, contradicting the assumption that $K$ is colorful. Hence, $\varphi_0,\dots,\varphi_{k}$ are indeed pairwise distinct. 
	Now the edge-disjointness of the cliques $K_{\varphi_0},K_{\varphi_1},\dots,K_{\varphi_k}$ implies that $w'_i := \varphi_i(x_i) \neq w_i$ for each $1 \leq i \leq k$ and that $u_{\ell+1} := \varphi_0(y_{\ell}) \neq u_{\ell}$.
	
	We now show how to construct a copy of $G_{\ell+1}$ using the copies of $G_{\ell}$ corresponding to $\varphi_1,\dots,\varphi_k$ and the copy of $G$ corresponding to $\varphi_0(G^{\ell})$. In this copy of $G_{\ell+1}$, the role of $x_i(G_{\ell+1})$ will be played by $w_i$ for every $1 \leq i \leq k$, the role of the vertex $x'_i \in V(G_{\ell+1})$ will be played by $w'_i$ for every $1 \leq i \leq k$ (recall the definition of $G_{\ell+1}$) and the role of $y_i(G_{\ell+1})$ will be played by $u_i$ for every $0 \leq i \leq \ell+1$. (Recall that the vertices $u_0,\dots,u_{\ell-1}$ have already been fixed via the choice of $u$.) Note that, for each $1 \leq i \leq k$, the embedding $\varphi_i$ corresponds to a copy of $G_{\ell}$ in which $w_j$ plays the role of $x_j(G_{\ell})$ for every $j \in [k] \setminus \{i\}$, $w'_i$ plays the role of $x_i(G_{\ell})$ and $u_j$ plays the role of $y_j(G_{\ell})$ for every $0 \leq j \leq \ell$. This copy of $G_{\ell}$ will play the role of $G_{\ell}^i$ in our copy of $G_{\ell+1}$. Similarly, restricting $\varphi_0$ to $V(G^{\ell})$ gives a copy of $G$ in which $w_i$ plays the role of $x_i(G)$ for each $1 \leq i \leq k$ and $u_{\ell+1}$ plays the role of $y_0(G)$ (as $y_0(G^{\ell}) = y_{\ell}(G_{\ell})$ and
	$\varphi_0(y_{\ell}(G_{\ell})) = u_{\ell+1}$).
	By the definition of $G_{\ell+1}$, in order to show that \linebreak$\text{Im}(\varphi_1) \cup \dots \cup \text{Im}(\varphi_k) \cup \varphi_0(V(G^{\ell}))$ spans a copy of $G_{\ell+1}$, it suffices to verify that the $k$ copies of $G_{\ell}$ given by $\varphi_1,\dots,\varphi_k$, and the copy of $G$ given by $\varphi_0(G^{\ell})$, do not intersect outside of 
	$\{w_1,\dots,w_k,u_0,\dots,u_{\ell}\}$. Therefore, our goal is to show that
	$\text{Im}(\varphi_i) \cap \nolinebreak \text{Im}(\varphi_j) = \{w_1,\dots,w_k,u_0,\dots,u_{\ell}\} \setminus \{w_i,w_j\}$ for each $1 \leq i < j \leq k$ and that
	$\text{Im}(\varphi_i) \cap \varphi_0(V(G^{\ell})) = \{w_1,\dots,w_k\} \setminus \{w_i\}$ for each $1 \leq i \leq k$. We start with the former statement.
	Fix any $1 \leq i < j \leq k$. Setting $U := U(\varphi_i,\varphi_j)$,
	note that $\text{Im}(\varphi_i) \cap \text{Im}(\varphi_j) = \varphi_i(U) = \varphi_j(U)$, that $y_0,\dots,y_{\ell} \in U$ and that $U \cap X = X \setminus \{x_i,x_j\}$. Indeed, by the two items above we know that $\varphi_i,\varphi_j$ agree on $y_{\ell}$ and on $X \setminus \{x_i,x_j\}$ and, on the other hand, that $\varphi_i(x_j) = w_j \neq \varphi_j(x_j)$ and $\varphi_j(x_i) = w_i \neq \varphi_i(x_i)$.
	So $|U \cap X| = k - 2$. If we had $U \setminus A_{\ell} \neq \emptyset$, then $U$ would satisfy Condition (ii) above, which in turn would imply that $\{\varphi_i,\varphi_j\} \in E(\mathcal{G})$, thus contradicting the fact that $\mathcal{F}^*(u) \subseteq \mathcal{F}^*$ is an independent set in $\mathcal{G}$. So we see that
	$U \subseteq A_{\ell}$ and therefore
	$U = A_{\ell} \setminus \{x_i,x_j\}$. This in turn is equivalent to having
	$\text{Im}(\varphi_i) \cap \text{Im}(\varphi_j) =  \{w_1,\dots,w_k,u_0,\dots,u_{\ell}\} \setminus \{w_i,w_j\}$, as required.  
	
	Let us now show that
	$\text{Im}(\varphi_i) \cap \varphi_0(V(G^{\ell})) = \{w_1,\dots,w_k\} \setminus \{w_i\}$ holds for every $1 \leq i \leq k$.
	Fixing $1 \leq i \leq k$, set $U := U(\varphi_i,\varphi_0)$ and note that 
	$A_{\ell} \setminus \{x_i,y_{\ell}\} =
	\{x_1,\dots,x_k,y_0,\dots,y_{\ell-1}\} \setminus \{x_i\} \subseteq U$. Now, if
	$U \cap V(G^{\ell})$ were not contained in $X$, then $U$ would satisfy Condition (iii) above, which would imply that $\{\varphi_i,\varphi_0\} \in E(\mathcal{G})$, a contradiction. So we see that
	$U \cap V(G^{\ell}) \subseteq X$. Moreover, $x_i,y_{\ell} \notin U$, 
	because $\varphi_0(x_i) = w_i \neq \varphi_i(x_i)$ and $\varphi_i(y_{\ell}) = u_{\ell} \neq \varphi_0(y_{\ell})$. 
	So we see that
	$U \cap V(G^{\ell}) = \{x_1,\dots,x_k\} \setminus \{x_i\}$, which implies that
	$\text{Im}(\varphi_i) \cap \varphi_0(V(G^{\ell})) = 
	\{w_1,\dots,w_k\} \setminus \nolinebreak \{w_i\}$.
%
\end{proof}

Finally, we use Claim \ref{claim:Sarkozy_Selkow_main} in order to establish Item 3 of Lemma \ref{lem:Sarkozy_Selkow} by induction on $\ell$. The case $\ell = 0$ is trivial. Let us now fix $\ell \geq 0$, assume the validity of Item 3 of Lemma \ref{lem:Sarkozy_Selkow} for $\ell$ and prove its validity for $\ell + 1$. It is easy to see that if the assertion of Item 3(a) holds for parameter $\ell$, then it also holds for parameter $\ell + 1$. So we may assume that the assertion of Item 3(b) holds, namely, that $H$ contains at least $\delta' \cdot n^{k + \ell}$ copies of $G_{\ell}$, where $\delta' := \delta_{\ref{lem:Sarkozy_Selkow}}(\ell,r,\varepsilon)$. Now, apply Claim \ref{claim:Sarkozy_Selkow_main} to $H$ (with parameter $\delta'$ in place of $\varepsilon$). If Item 1 of Claim \ref{claim:Sarkozy_Selkow_main} holds, then Item 3(a) of Lemma \ref{lem:Sarkozy_Selkow} holds with $\ell + 1$ in place of $\ell$ (and with $j = \ell$). If instead Item 2 of Claim \ref{claim:Sarkozy_Selkow_main} holds, then $H$ contains at least $\delta \cdot n^{k + \ell + 1}$ copies of $G_{\ell+1}$, where $\delta = \delta_{\ref{claim:Sarkozy_Selkow_main}}(\ell,r,\delta')$. So we see that
Item 3(b) in Lemma \ref{lem:Sarkozy_Selkow} is satisfied. This completes the proof of the lemma. 
	
\section{An Improved Bound for a Problem of Erd\H{o}s and Gy\'{a}rf\'{a}s}\label{sec:gen_Ramsey}
The Brown--Erd\H{o}s--S\'{o}s problem has a known connection to (a special case of) the following generalized Ramsey problem, introduced by Erd\H{o}s and Gy\'{a}rf\'{a}s in \cite{EG}. Let $g(n,p,q)$ denote the minimum number of colors in a coloring of the edges of $K_n$ in which every copy of $K_p$ receives at least $q$ colors. For a fixed $p \geq 4$, Erd\H{o}s and Gy\'{a}rf\'{a}s \cite{EG} showed that $g(n,p,q)$ is quadratic in $n$ if and only if $q \geq q_{\text{quad}}(p) := \binom{p}{2} - \lfloor \frac{p}{2} \rfloor + 2$ and that, for $p$ even, $g(n,p,q_{\text{quad}}(p)) \leq \binom{n}{2} - \varepsilon n^2$ for some $\varepsilon = \varepsilon(p) > 0$. They then asked for which $q_{\text{quad}}(p) \leq q \leq \binom{p}{2}$ it holds that $g(n,p,q) = \binom{n}{2} - o(n^2)$, observing that this question is related to the Brown--Erd\H{o}s--S\'{o}s problem and using this relationship to prove some initial results. This connection was further exploited by S\'{a}rk\"{o}zy and Selkow, who combined it with \eqref{eq:Sarkozy_Selkow} (or, more precisely, with a $4$-uniform analogue thereof) to show that 
$g(n,p,q) = \binom{n}{2} - o(n^2)$ whenever 
$q > q_{\text{quad}}(p) + \lceil \frac{\log_2 p}{2} \rceil.$
By using our improved bound for the Brown--Erd\H{o}s--S\'{o}s problem (i.e., Corollary \ref{cor:general_BES}), we can improve upon the result of S\'{a}rk\"{o}zy and Selkow \cite{Sarkozy_Selkow_Ramsey}. For completeness, we now sketch the proof of the reduction from the above generalized Ramsey problem to the Brown--Erd\H{o}s--S\'{o}s problem. This reduction has been used implicitly in \cite{EG,Sarkozy_Selkow_Ramsey}. 

\begin{proposition}\label{prop:generalized_Ramsey}
	Let $p \geq 4$ and $q_{\text{quad}}(p) \leq q \leq \binom{p}{2}$. Set 
	$e := \binom{p}{2} - q + 1$. 
	If $f_4(n,p,e) = o(n^2)$, then $g(n,p,q) = \binom{n}{2} - o(n^2)$. 
\end{proposition}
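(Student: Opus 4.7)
I argue by contrapositive. Suppose that, for arbitrarily large $n$, there exists an edge-coloring of $K_n$ with at most $\binom{n}{2}-\varepsilon n^{2}$ colors in which every $K_p$ receives at least $q$ colors; the goal is to build a $4$-uniform hypergraph whose size contradicts $f_4(n,p,e)=o(n^{2})$.

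For each color class $C$ with $|C|\geq 2$, fix an arbitrary representative edge $r_{C}\in C$, and call every other edge of $C$ \emph{non-representative}. The number of non-representative edges equals $\binom{n}{2}$ minus the number of colors used, and is therefore at least $\varepsilon n^{2}$. I construct a $4$-uniform hypergraph $H$ on the vertex set $V(K_n)\cup\{\ast\}$, where $\ast$ is a new auxiliary vertex, as follows: for each non-representative edge $f$ of color class $C$, set $S_f:=f\cup r_{C}$; add $S_f$ to $E(H)$ if $|S_f|=4$, and add $S_f\cup\{\ast\}$ if $|S_f|=3$. A brief case check shows that each hyperedge of $H$ arises from at most three non-representative edges --- in the disjoint case because $K_4$ has three perfect matchings, each of which contributes at most one valid non-rep/rep pair, and in the shared-vertex case because the underlying $3$-set has only three edges as candidates for the non-rep. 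Hence $|E(H)|\geq \varepsilon n^{2}/3$.

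Since $|V(H)|=n+1$ and $f_4(n+1,p,e)=o((n+1)^{2})=o(n^{2})$, for $n$ sufficiently large the hypergraph $H$ must contain a $(p,e)$-configuration: a set $U'\subseteq V(H)$ with $|U'|\leq p$ that spans at least $e$ hyperedges. Let $U:=U'\setminus\{\ast\}\subseteq V(K_n)$, so $|U|\leq p$. Each of these $e$ hyperedges yields at least one non-representative edge $f\subseteq U$ whose representative $r_{C(f)}$ also lies in $U$, and distinct hyperedges yield distinct edges $f$, because the pair $(f,r_{C(f)})$ uniquely determines the associated hyperedge. Extending $U$ to any $U^{*}\supseteq U$ with $|U^{*}|=p$ (possible since $n$ is large), each of the (at least) $e$ non-representative edges in $K_n[U^{*}]$ shares its color with its representative (also in $K_n[U^{*}]$), so $K_p[U^{*}]$ uses at most $\binom{p}{2}-e=q-1$ distinct colors, contradicting the assumption.

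The main obstacle is minor: one must carefully verify the multiplicity-$3$ bound used to obtain $|E(H)|\geq \varepsilon n^{2}/3$, and check the distinctness claim needed to convert $e$ distinct hyperedges into $e$ distinct color-savings on $K_p[U^{*}]$. Both amount to short case analyses on how a $4$-set $S$ or a $3$-set $T$ can be written as $f\cup r_{C(f)}$ for a non-representative edge $f$.
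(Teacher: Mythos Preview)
Your proof is correct, and it takes a genuinely different route from the paper's. The paper first argues that every color class has fewer than $\lfloor p/2\rfloor$ edges (so at least $2\varepsilon n^2/p$ colors appear at least twice), and then separately shows that at most $(p-1)n/2$ monochromatic edge-pairs intersect; this leaves enough \emph{disjoint} monochromatic pairs to build a $4$-graph on $V(K_n)$ with no $(p,e)$-configuration. You bypass both of these preliminary bounds: by counting non-representative edges you get $\ge \varepsilon n^2$ pairs immediately, and by adding the auxiliary vertex $\ast$ you absorb the intersecting case into the $4$-uniform framework rather than discarding it. The trade-off is that your hypergraph lives on $n+1$ vertices, but as you note this is harmless since $f_4(n+1,p,e)=o(n^2)$ as well. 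Your approach is arguably cleaner and even yields a slightly better constant ($\varepsilon/3$ versus $\varepsilon/(3p)$), though of course constants are irrelevant here.

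One small remark: the step ``$e$ distinct non-representative edges with representatives in $U^\ast$ implies at most $\binom{p}{2}-e$ colors'' is correct but deserves one more sentence. Grouping the $f_i$ by color $c$, the edges $\{f_i:C(f_i)=c\}\cup\{r_c\}$ are pairwise distinct (the $f_i$ are distinct non-representatives and $r_c$ is a representative, hence different from every $f_j$), so each such color contributes at least $|I_c|+1$ edges; summing gives $\binom{p}{2}\ge(\text{number of colors})+e$.
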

\begin{proof}
	Assume that $f_4(n,p,e) = o(n^2)$ and suppose, for the sake of contradiction, that (for infinitely many $n$) there is a coloring of the edges of $K_n$ with $t := \binom{n}{2} - \varepsilon n^2$ colors (where $\varepsilon > 0$ is fixed) in which every copy of $K_p$ receives at least $q$ colors. 
	Then at least $\varepsilon n^2$ edges have the same color as some other edge, meaning that at least $\varepsilon n^2$ colors appear on more than one edge.
	
	Observe that each color appears fewer than $\lfloor \frac{p}{2} \rfloor$ times. Indeed, otherwise take edges $e_1,\dots,e_{\lfloor \frac{p}{2} \rfloor}$, all having the same color, and supplement them with (a suitable number of) vertices to obtain a copy of $K_p$ which receives at most 
	$\binom{p}{2} - \lfloor \frac{p}{2} \rfloor + 1 < q_{\text{quad}}(p) \leq q$ colors, a contradiction. It follows that at least $\varepsilon n^2/\lfloor p/2 \rfloor \geq 2\varepsilon n^2/p$ colors appear at least twice. 
	For each such color $c$, fix a pair of distinct edges $(e_1^c,e_2^c)$ which are colored with $c$. We claim that there are fewer than $(p-1)n/2$ colors $c$ for which $e_1^c$ and $e_2^c$ intersect. Indeed, assign to each such intersecting pair of edges their common vertex. If the number of intersecting pairs is at least $(p-1)n/2$, then there is a vertex $u$ which is the common vertex for at least $\lfloor \frac{p-1}{2} \rfloor$ such edge-pairs. 
	In other words, there are distinct vertices $(x_i,y_i : 1 \leq i \leq \lfloor \frac{p-1}{2} \rfloor)$
	such that the color of $\{u,x_i\}$ is the same as that of $\{u,y_i\}$ for each $1 \leq i \leq \lfloor \frac{p-1}{2} \rfloor$. As before, by adding a suitable number of vertices one obtains a copy of $K_p$ which receives at most 
	$\binom{p}{2} - \lfloor \frac{p-1}{2} \rfloor < q_{\text{quad}}(p) \leq q$ colors, contradicting our assumption.
	
	It follows from the previous two paragraphs that there are at least $2\varepsilon n^2/p - (p-1)n/2 \geq \varepsilon n^2/p$ colors $c$ (appearing at least twice) for which $e_1^c,e_2^c$ are disjoint. 
	Define an auxiliary $4$-graph $H$ on $V(K_n)$ by putting a ($4$-uniform) edge on $e_1^c \cup e_2^c$ for each color $c$ such that $e_1^c,e_2^c$ are disjoint. Then $e(H) \geq \varepsilon n^2/(3p)$ because $K_4$ has $3$ perfect matchings. 
	Observe, crucially, that $H$ contains no $(p,e)$-configuration. Indeed, if $H$ contained a $(p,e)$-configuration, then, by the definition of $H$ and our choice of $e$, the vertex set of this configuration would correspond to a copy of $K_p$ receiving at most
	$\binom{p}{2} - e = q-1$ colors, which is impossible. We thus conclude that $e(H) \leq f_4(n,p,e)$. On the other hand, $e(H) \geq \varepsilon n^2/(3p)$, implying that $f_4(n,p,e) = \Omega(n^2)$, contradicting our assumption. 
\end{proof}
	
	By Corollary \ref{cor:general_BES}, applied with parameters $r = 4$, $k = 2$ and $e = \binom{p}{2} - q + 1$, the bound $f_4(n,p,e) = o(n^2)$ holds whenever $p \geq 2e + \lceil 26\log e/\log\log e \rceil = 2(\binom{p}{2} - q + 1) + \lceil 26\log e/\log\log e \rceil$. By rearranging, we get the inequality 
	$q \geq \binom{p}{2} - \frac{p}{2} + 1 + \frac{1}{2} \cdot \lceil 26\log e/\log\log e \rceil$. Recalling the value of $q_{\text{quad}}(p)$ and using the (obvious) fact that $e \leq \binom{p}{2}$, we see that this inequality holds whenever 
	$q \geq  q_{\text{quad}}(p) + C\log p/\log\log p$ for some suitable absolute constant $C$. By combining this with Proposition \ref{prop:generalized_Ramsey}, we obtain the following improvement upon the aforementioned result from \cite{Sarkozy_Selkow_Ramsey}. 
	\begin{theorem}\label{theorem:generalized_Ramsey}
		There is an absolute constant $C$ such that $g(n,p,q) = \binom{n}{2} - o(n^2)$ for every $p \geq 4$ and $q \geq q_{\text{quad}}(p) + C\log p/\log\log p$.
	\end{theorem}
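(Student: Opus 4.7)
The plan is to combine the reduction from Proposition \ref{prop:generalized_Ramsey} with the bound on the Brown--Erd\H{o}s--S\'os problem for $4$-uniform hypergraphs supplied by Corollary \ref{cor:general_BES}. Proposition \ref{prop:generalized_Ramsey} tells us that, in order to conclude $g(n,p,q) = \binom{n}{2} - o(n^2)$, it suffices to show $f_4(n,p,e) = o(n^2)$ for $e := \binom{p}{2} - q + 1$. So the entire task reduces to verifying that, under the hypothesis $q \geq q_{\text{quad}}(p) + C\log p/\log\log p$ (with $C$ chosen appropriately), the parameters $(p,e)$ fall within the range where Corollary \ref{cor:general_BES} is applicable.

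Concretely, I would apply Corollary \ref{cor:general_BES} with $r = 4$, $k = 2$ and the above value of $e$. The corollary then yields $f_4(n,p,e) = o(n^2)$ provided that
\[
p \geq 2e + 18\log e/\log\log e = 2\!\left(\binom{p}{2} - q + 1\right) + 18\log e/\log\log e,
\]
which rearranges to
\[
q \geq \binom{p}{2} - \frac{p}{2} + 1 + \frac{18\log e}{\log\log e}.
\]
Recalling that $q_{\text{quad}}(p) = \binom{p}{2} - \lfloor p/2 \rfloor + 2$, the right-hand side differs from $q_{\text{quad}}(p)$ by at most $18 \log e/\log\log e + O(1)$.

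To finish, I would use the trivial bound $e \leq \binom{p}{2} \leq p^2$, which yields $\log e \leq 2\log p$ and, since $x \mapsto \log x/\log\log x$ is monotone increasing for large $x$, gives $\log e/\log\log e = O(\log p/\log\log p)$. Hence there is an absolute constant $C$ such that whenever $q \geq q_{\text{quad}}(p) + C\log p/\log\log p$, the inequality displayed above is satisfied, so Corollary \ref{cor:general_BES} produces $f_4(n,p,e) = o(n^2)$, and then Proposition \ref{prop:generalized_Ramsey} delivers the desired $g(n,p,q) = \binom{n}{2} - o(n^2)$.

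There is essentially no obstacle in this argument: all the work has been done in Corollary \ref{cor:general_BES} and Proposition \ref{prop:generalized_Ramsey}, and the remaining step is the one-line arithmetic relating $e$ to $p$ and absorbing additive $O(1)$ terms into the constant $C$. One minor point to keep in mind is the small-$p$ regime; for bounded $p$ the claim follows from the S\'{a}rk\"{o}zy--Selkow result already quoted in the introduction to that section, so the absolute constant $C$ can be chosen to handle all $p \geq 4$ uniformly.
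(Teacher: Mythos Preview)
Your proposal is correct and follows essentially the same route as the paper: apply Corollary \ref{cor:general_BES} with $r=4$, $k=2$ and $e=\binom{p}{2}-q+1$, rearrange the resulting inequality into a lower bound on $q$, and use $e \leq \binom{p}{2}$ together with Proposition \ref{prop:generalized_Ramsey} to finish. Your added remark about absorbing the small-$p$ cases into the constant $C$ is a reasonable extra care that the paper leaves implicit.
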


\end{document}